\theoremstyle{plain}
	\newtheorem{thm}{Theorem}[section]
	\newtheorem{prop}[thm]{Proposition}
	\newtheorem{cor}[thm]{Corollary}
\theoremstyle{definition}
	\newtheorem{remark}[thm]{Remark}
\theoremstyle{example}
	\newtheorem*{example}{Example}
\theoremstyle{remark}
\numberwithin{equation}{section}
\def\cB{\mathcal{B}}\def\cD{\mathcal{D}}\def\cF{\mathcal{F}}\def\cP{\mathcal{P}}\def\cW{\mathcal{W}}
  \def\CC{\mathbb{C}}                       \def\ZZ{\mathbb{Z}}
\def\fgl{\mathfrak{gl}}  \def\fsl{\mathfrak{sl}}
\def\cc{\mathbf{c}}
\DeclareMathAlphabet{\mathpzc}{OT1}{pzc}{m}{it} 
\def\ze{\mathpzc{e}}\def\zg{\mathpzc{g}}
         \def\zJ{\mathpzc{J}}               \def\zZ{\mathpzc{Z}}
\def\zTL{\mathpzc{TL}}
\def\dim{\mathrm{dim}}
\def\ext{\textrm{ext}}
\def\Res{\mathrm{Res}}
\def\vep{\varepsilon}
\def\half{{\hbox{$\frac12$}}}
\def\<{\langle}	\def\>{\rangle}
\def\({[\![}
\def\){]\!]}
\definecolor{dred}{rgb}{.65, 0, 0.15}
\definecolor{arun}{rgb}{.8,0,0}
\definecolor{zajj}{rgb}{.65, 0, .85}
\definecolor{gray}{rgb}{0.6, .6, .6}
\tikzstyle over=[draw=white,double=black,line width=2pt, double distance=.5pt]
\tikzstyle{B}=[draw, fill=black, circle, inner sep=0pt, outer sep=0pt, minimum size=5pt]
\tikzstyle{V}=[draw, fill =black, circle, inner sep=0pt, minimum size=1.5pt]
\tikzstyle{bV}=[draw, fill =black, circle, inner sep=0pt, minimum size=3.5pt]
\tikzstyle{cV}=[draw, fill =white, circle, inner sep=0pt, minimum size=3.5pt]
\tikzstyle{BoxArr}=[xscale = .2, yscale=-.2]
\tikzstyle{WhiteCircle}=[draw,circle,white, fill=white]
\newcommand{\TikZ}[1]{
\begin{matrix}\begin{tikzpicture}#1\end{tikzpicture}\end{matrix}
}
\def\ShiftY{\pgftransformyshift}
\newcommand\OneTLNode[2]{ 
\TikZ{[xscale=.75, yscale=-.75]
	\node[#1] at (0,0) {\scriptsize$#2$};
	}}
\def\Over[#1,#2][#3,#4]{ 
	\draw[style=over]   (#1,#2) .. controls ++(0,#4*.5-#2*.5) and ++(0,-#4*.5+#2*.5) .. (#3,#4);}
\def\Under[#1,#2][#3,#4]{ 
	\draw  (#1,#2) .. controls ++(0,#4*.5-#2*.5) and ++(0,-#4*.5+#2*.5) .. (#3,#4);}
\def\Cross[#1,#2][#3,#4]{
	\Under[#3,#2][#1,#4]\Over[#1,#2][#3,#4]}
\def\Ez[#1]{\draw [over, bend left=75] (1,#1+1) to (1-.4,#1+.7)  (1-.4,#1+.3)  to (1,#1) ;
		\draw[densely dotted]  (1-.4,#1)--(1-.4,#1+1) ; \node[V] at (1-.4,#1+.3){}; \node[V] at  (1-.4,#1+.7){};}
\def\Ek[#1][#2]{\draw [over, bend right=75] (#2,#1+1) to (#2+.4,#1+.7)  (#2+.4,#1+.3)  to (#2,#1) ;
		\draw[densely dotted]  (#2+.4,#1)--(#2+.4,#1+1) ; \node[V] at (#2+.4,#1+.3){}; \node[V] at  (#2+.4,#1+.7){};}
\def\Tops[#1][#2][#3]{
	\foreach\x in {#1}{
		\draw (\x+.15,#2) -- (\x+.15,#2+.1) (\x-.15,#2) -- (\x-.15,#2+.1) ;
		\draw (\x+.15,#2+.1) arc (0:360:1.5mm and .75mm);}
	\foreach \x in {1,...,#3} {\draw (\x,#2)  to (\x,#2+.05); \node[V] at (\x,#2+.05){};}
	}
\def\Bottoms[#1][#2][#3]{
	\foreach\x in {#1}{
		\draw (\x+.15,#2) -- (\x+.15,#2-.1) (\x-.15,#2) -- (\x-.15,#2-.1) ;
		\draw (\x+.15,#2-.1) arc (0:-180:1.5mm and .75mm);}
	\foreach \x in {1,...,#3} {\draw (\x,#2)  to (\x,#2-.05); \node[V] at (\x,#2-.05){};}
	}
\def\Caps[#1][#2,#3][#4]{
	\Tops[#1][#3][#4]
	\Bottoms[#1][#2][#4]
	}
\def\Pole[#1][#2,#3]{
	\shade[left color=white,right color=white] (#1+.15,#2) rectangle (#1-.15,#3);
	\draw[over] (#1+.15,#2) to (#1+.15,#3) (#1-.15,#2) to (#1-.15,#3) ;}
\def\Label[#1,#2][#3][#4]{
	\node[above] at (#3,#2+.1) {#4};
	\node[below] at (#3,#1-.1) {#4};		}
\def\Nodes[#1][#2]{
	 \foreach \x in {1,...,#2} {\node[V] at (\x,#1){};	}
	}
\def\PoleCaps[#1][#2,#3]{
	\foreach\x in {#1}{
		\draw (\x+.15,#2) -- (\x+.15,#2-.1) (\x-.15,#2) -- (\x-.15,#2-.1) ;
		\draw (\x+.15,#2-.1) arc (0:-180:1.5mm and .75mm);}
	\foreach\x in {#1}{
		\draw (\x+.15,#3) -- (\x+.15,#3+.1) (\x-.15,#3) -- (\x-.15,#3+.1) ;
		\draw (\x+.15,#3+.1) arc (0:360:1.5mm and .75mm);}
	}
\def\PoleTwist[#1,#2]{
	\foreach \x/\y in {-1/1L, -.7/1R, 0/2L, .3/2R}{\coordinate(T\y) at (\x,#2); \coordinate(B\y) at (\x,#1);}
	\draw[thin] (B1R) .. controls ++(0,#2*.5-#1*.5-.1) and ++(0,-#2*.5+#1*.5-.1) ..  (T2R)
			(B1L)   .. controls ++(0,#2*.5-#1*.5+.1) and ++(0,-#2*.5+#1*.5+.1) ..    (T2L) ;
	\draw[line width=2pt, white]
			(.15,#1)  .. controls +(0,#2*.5-#1*.5) and +(0,-#2*.5+#1*.5) ..   (-.85,#2) ;
	\draw[thin,over] 
		(B2R) .. controls ++(0,#2*.5-#1*.5+.1) and ++(0,-#2*.5+#1*.5+.1) ..  (T1R) 
			(B2L)  .. controls +(0,#2*.5-#1*.5-.1) and +(0,-#2*.5+#1*.5-.1) ..   (T1L) ;
			}
\def\SymPolesCaps[#1,#2][#3]{
	\draw (.3,#1) -- (.3,#1-.1) (.15,#1) -- (.15,#1-.1) ;
	\draw (.3,#1-.1) arc (0:-180:2pt and 1.5pt);
	\draw (#3+.7,#1) -- (#3+.7,#1-.1) (#3+.85,#1) -- (#3+.85,#1-.1) ;
	\draw (#3+.85,#1-.1)  arc (0:-180:2pt and 1.5pt);
	\draw (.3,#2) -- (.3,#2+.1) (.15,#2) -- (.15,#2+.1) ;
	\draw (.3,#2+.1) arc (0:360:2pt and 1.5pt);
	\draw (#3+.7,#2) -- (#3+.7,#2+.1) (#3+.85,#2) -- (#3+.85,#2+.1) ;
	\draw (#3+.85,#2+.1) arc (0:360:2pt and 1.5pt);}
\newcounter{r}
\newcommand\Part[1]{
        \setcounter{r}{1}
	 \foreach \x in {#1}{
 	{\ifnum\value{r}=1
		\draw (0,\value{r}-1)--(\x,\value{r}-1); 
		\fi}
	\draw (0,\value{r}) to (\x,\value{r});
   	\foreach \y in {0, ..., \x} {\draw (\y,\value{r})--(\y,\value{r}-1);}
	\addtocounter{r}{1}
 }}
  \def\PartUNIT{.175}
  \newcommand\ColTab[1]{
\TikZ{[xscale=.3, yscale=-.3] 
        \setcounter{r}{1}
	 \foreach \x in {#1}{
 	{\ifnum\value{r}=1
		\draw (0,0)--(1,0); 
		\fi}
	\draw (0,\value{r}-1) to  (0,\value{r}) to  (1,\value{r}) to  (1,\value{r}-1) ;
   	\node at (.5, \value{r}-.5) {\tiny $\x$};
	\addtocounter{r}{1}}}
 }
\newcommand{\BOX}[2]{
	\draw[fill=white] (#1) to ++(1,0) to ++(0, 1) to ++(-1, 0) to ++(0, -1);
	\begin{scope}[xshift=.5cm, yshift=.5cm]\node at (#1) {\tiny #2};\end{scope}
	}
\newcommand{\bDOT}[1]{
\filldraw [red] (#1) circle (6pt);
}
\newcommand\Cont[2]{\node at (#1) {\small #2};}
\tikzstyle{sBoxArr}=[xscale = \PartUNIT, yscale=-\PartUNIT]
\title{Calibrated representations of two boundary Temperley-Lieb algebras}
\author{Zajj Daugherty}
\address{Department of Mathematics, 
The City College of New York, 
NAC 8/133,
New York, NY 10031}
\author{Arun Ram}
\address{Department of Mathematics and Statistics, University of Melbourne, Parkville VIC 3010 Australia}
\email{zdaugherty@gc.cuny.edu, aram@unimelb.edu.au}
\date{\today}
\begin{document}
\maketitle

\begin{center}
{\sl In memory of a friend and an inspiration, Vladimir Rittenberg 1934-2018}
\end{center}

\medskip

\begin{abstract}
The two boundary Temperley-Lieb algebra $TL_k$ arises in the transfer matrix formulation of lattice models in Statistical Mechanics, in particular in the introduction of  integrable boundary terms to the six-vertex model. In this paper, we classify and study the calibrated representations---those for which all the Murphy elements (integrals) are simultaneously diagonalizable---which, in turn, corresponds to diagonalizing the transfer matrix in the associated model.  Our approach is founded upon the realization of $TL_k$ as a quotient of the type $C_k$ affine Hecke algebra $H_k$. In previous work, we studied this Hecke algebra via its presentation by braid diagrams, tensor space operators, and related combinatorial constructions. That work is directly applied herein to give a combinatorial classification and construction of all irreducible calibrated $TL_k$-modules and explain how these modules also arise from a Schur-Weyl duality with the quantum group $U_q\fgl_2$.\end{abstract}

\tableofcontents


\section{Introduction}

The paper \cite{DR} studied the calibrated representations of affine Hecke algebras of type C with unequal parameters and developed their combinatorics and their role in Schur-Weyl duality.  This paper applies that information to the study of two boundary  Temperley-Lieb algebras.  The two boundary Temperley-Lieb algebras appear in statistical mechanics for analysis of spin chains with generalized boundary conditions \cite{GP,GNPR}. T he spectrum of the Hamiltonian for these spin chains with  boundaries can be determined via the representation theory of the two boundary Temperley-Lieb algebras. In fact, the need to understand the representation theory of the two boundary Temperley-Lieb algebra better was a primary motivation for our preceding papers \cite{Da, DR} on two boundary Hecke algebras.

In the first part of this paper, Section 2, we review the definition and structure of the two boundary Hecke algebra $H_k$ (the affine Hecke algebra of type C with unequal parameters). Following this brief review we carefully analyze  certain idempotents which, as we prove in Theorem \ref{idempquotient}, generate the ideal that one must quotient by to obtain the two boundary Temperley-Lieb algebra from the two boundary Hecke algebra.  It is the expression of these idempotents in terms of the intertwiner presentation of $H_k$  (see Proposition \ref{idempexpansion}) that will eventually provide understanding  of the weights that can appear in two boundary Temperley-Lieb modules (the possible eigenvalues of the ``Murphy elements" $W_i$---see equation \eqref{BraidMurphy} and \S\ref{Bernsteingens}).

In Section 3 we define the two boundary Temperley-Lieb algebra  (or \emph{symplectic blob algebra}) $TL_k$  following \cite{GN, GMP07, GMP08, GMP12,  Re12, KMP16, GMP17},  and review the diagram algebra calculus for these algebras. Part of our contribution is to extend this calculus to make its connection to the diagrammatic calculus of the Hecke algebra $H_k$ via braids.    In Theorem \ref{IIIexpansion} we use these diagrammatics to give a proof of a result of \cite{GN} that provides an expansion of a certain central element of $H_k$ inside $TL_k$. Using the Hecke algebra point of view, this result enables us to understand that  the center of $TL_k$ is a polynomial ring in one variable $Z(TL_k) = \CC[Z]$,  and that $TL_k$ is of finite rank over this center. 
In retrospect, the algebra $H_k$ has a similar structure and so perhaps this should not be surprising but, nonetheless, it is pleasant to see it come out in such a vivid and explicit form. 

We have used a different normalization of the parameters of the two boundary Hecke and Temperley-Lieb algebra from those used in \cite{GN,GMP12}.  Our normalization will be helpful, for example, for future applications of these algebras to the theory of  Macdonald polynomials and to the study of the exotic nilpotent cone.  In both of these cases  the affine Hecke algebra of type $C_n$ plays an important role:  the Koornwinder polynomials are the Macdonald polynomials for type $(C_n^\vee, C_n)$ \cite{M03},  and the K-theory of the Steinberg variety of the exotic nilpotent cone provides a geometric construction of the representations of the two boundary Hecke and Temperley-Lieb algebras at unequal parameters (see \cite{Kat}).

The calibrated representations are the irreducible representations of the two boundary Hecke algebra for which a large family of commuting operators (integrals, or Murphy elements) have a simple (joint) spectrum.  This property makes these representations particularly attractive, and the detailed combinatorics of these representations has been worked out in \cite{DR}. In Section 4 we use the detailed analysis of the idempotents done in Section 2 to determine exactly which calibrated irreducible representations of the two boundary Hecke  algebra are representations of the two boundary Temperley-Lieb algebra  (Theorem \ref{TLconfigurations}).  In consequence, we obtain a full classification of the calibrated irreducible representations of the two boundary  Temperley-Lieb algebras.

As explained in \cite{DR}, there is a Schur-Weyl type duality between the two boundary  Hecke algebra and the quantum group $U_q\fgl_n$.  The classical  Schur-Weyl duality between $U_q\fgl_n$ and the finite Hecke algebra of type A becomes a  Schur-Weyl duality for the finite Temperley-Lieb algebra when $n=2$. In Theorem \ref{thm:modules-in-tensor-space} we show that at $n=2$  the Schur-Weyl duality of \cite{DR} gives a Schur-Weyl duality for the two boundary Temperley-Lieb algebra.  This method (coming from R-matrices for the quantum group $U_q\fgl_2$) provides many many irreducible calibrated representations of the two boundary Temperley-Lieb algebra $TL_k$.  Using our results from Section 4, we determine exactly which irreducible calibrated representations of $TL_k$ occur in the Schur-Weyl duality context.

The seeds of this work were sown in a conversation between Pavel Pyatov, Arun Ram  and Vladimir Rittenberg at the Max Planck Institut in Bonn in 2006.    Vladimir was the leader and provided the inspiration by introducing us to spin chains with boundaries.  The seed has now grown from a concept into fully formed and fruitful mathematics.   We thank all the institutions which have supported our work on this paper,  particularly the University of Melbourne,  the  Australian Research Council (grants DP1201001942 and DP130100674), the National Science Foundation (grant DMS-1162010), the Simons Foundation (grant  \#586728), ICERM (Institute for Computational and Experimental Research in Mathematics, 2013 semester on Automorphic Forms, Combinatorial Representation Theory and Multiple Dirichlet Series) and the Max Planck Institut in Bonn.

\section{The two boundary Hecke algebra $H_k$}

The two boundary Hecke algebra is often called the affine Hecke algebra of type $(C, C^\vee)$.  In this  section we review the definitions of $H_k^{\mathrm{ext}}$ following our previous paper \cite{DR}.  In particular, we will need the basic diagrammatics and the ``Bernstein'' presentation with a Laurent polynomial ring $\CC[W_1^{\pm}, \ldots, W_k^{\pm}]$ and intertwiners $\tau_1, \ldots, \tau_k$.  After this review we define the idempotent elements $p_i^{(1^3)}$, $p_0^{(\emptyset, 1^2)}$, $p_0^{(1^2, \emptyset)}$, $p_{0^\vee}^{(\emptyset, 1^2)}$, $p_{0^\vee}^{(1^2,\emptyset)}$, which we will need to quotient by in order to obtain the two boundary Temperley-Lieb algebra.  We derive expressions of these elements in terms of  the different choices of generators: the braid generators $T_i$, the cap/cup generators $e_i$, and the intertwiner generators $\tau_i$ and $W_j$.

\subsection{Graph notation for braid relations}
For generators $g, h$, encode relations graphically by 
\begin{equation}\label{braidlengths}
\begin{array}{cl}
\begin{tikzpicture}
	\draw[fill=white] (0,0) circle (2.5pt) node[above=1pt] {\small $\phantom{h}g\phantom{h}$};
	\draw[fill=white] (1,0) circle (2.5pt) node[above=1pt] {\small $\phantom{g}h\phantom{g}$}; 
\end{tikzpicture}
&\hbox{means $gh=hg$,} 
\\ \\
\begin{tikzpicture}
	\draw (0,0)--(1,0);
	\draw[fill=white] (0,0) circle (2.5pt) node[above=1pt] {\small $\phantom{h}g\phantom{h}$};
	\draw[fill=white] (1,0) circle (2.5pt) node[above=1pt] {\small $\phantom{g}h\phantom{g}$}; 
\end{tikzpicture}
&\hbox{means $ghg=hgh$, and} \\ \\
\begin{tikzpicture}
	\draw[double distance = 2pt] (0,0)--(1,0);
	\draw[fill=white] (0,0) circle (2.5pt) node[above=1pt] {\small $\phantom{h}g\phantom{h}$};
	\draw[fill=white] (1,0) circle (2.5pt) node[above=1pt] {\small $\phantom{g}h\phantom{g}$}; 
\end{tikzpicture}
&\hbox{means $ghgh=hghg$.} 
\end{array}
\end{equation}
For example, the group of signed permutations,
\begin{equation}
\label{eq:Weyl-group}
\cW_0 = \left\{ 
\begin{matrix}
\hbox{bijections $w\colon \{-k, \ldots, -1, 1, \ldots, k\} \to \{ -k, \ldots, -1, 1, \ldots, k\}$} \\
\hbox{such that $w(-i) = -w(i)$ for $i=1, \ldots, k$}
\end{matrix} \right\},
\end{equation}
has a presentation by generators $s_0, s_1,\ldots, s_{k-1},$ with relations
\begin{equation}
\begin{tikzpicture}
	\foreach \x in {0,1, 2, 4,5}{
		\draw (\x,0) circle (2.5pt);
		\node(\x) at (\x,0){};
		}
	\foreach \x in {1, 2}{
		\node[label=above:$s_{\x}$] at (\x,0){};
		\node[label=above:$s_{k-\x}$] at (6-\x,0){};}
	\node[label=above:$s_0$] at (0){};
	\draw[double distance = 2pt] (0)--(1);
	\draw (1)--(2) (4)--(5);
	\draw[dashed] (2) to (4);
\end{tikzpicture}
\qquad\hbox{and}\qquad s_i^2=1\ \hbox{for $i=0,1,2,\ldots, k-1$.}
\label{W0defn}
\end{equation}

\subsection{The two boundary braid group}

The \emph{two boundary braid group} is the group $\cB_k$ generated by 
$\bar{T}_0, \bar{T}_1, \ldots, \bar{T}_k$,
with relations
\begin{equation}\label{Bdefrels}
\TikZ{
	\foreach \x in {0, 1, 2, 4,5,6}{
		\draw (\x,0) circle (2.5pt);
		\node(\x) at (\x,0){};
		}
	\foreach \x in {0, 1, 2}
		{\node[label=above:$\bar{T}_\x$] at (\x){};}
	\node[label=above:$\bar{T}_{k-2}$] at (4){};
	\node[label=above:$\bar{T}_{k-1}$] at (5){};
	\node[label=above:$\bar{T}_{k}$] at (6){};
	\draw[double distance = 2pt] (0)--(1) (5)--(6);
	\draw (1)--(2) (4)--(5);
	\draw[dashed] (2) to (4);
}\ .
\end{equation}
\noindent Pictorially, the generators of $\cB_k$ are identified with the braid diagrams
$$
{\def\TOP{2}\def\K{6}
\bar{T}_k=
\TikZ{[scale=.5]
\Pole[.15][0,2]
\Under[\K,0][\K+1.3,1]
\Pole[\K+.85][0,1][\K]
\Pole[\K+.85][1,2][\K]
\Over[\K+1.3,1][\K,2]
 \foreach \x in {1,...,5} {
	 \draw[thin] (\x,0) -- (\x,\TOP);
	 }
\Caps[.15,\K+.85][0,\TOP][\K]
},
	\qquad 
\bar{T}_0=
\TikZ{[scale=.5]
	\Pole[\K+.85][0,2][\K]
	\Pole[.15][0,1]
	\Over[1,0][-.3,1]
	\Under[-.3,1][1,2]
	\Pole[.15][1,2]
	 \foreach \x in {2,...,\K} {
	 \draw[thin] (\x,0) -- (\x,\TOP);
	 }
\Caps[.15,\K+.85][0,\TOP][\K]
},\qquad  \text{and}
}$$
\begin{equation}\label{LRpics}
{\def\TOP{2} \def\K{6}
\bar{T}_i=
\TikZ{[scale=.5]
	\Pole[\K+.85][0,2][\K]
	\Pole[.15][0,2]
	\Under[3,0][4,2]
	\Over[4,0][3,2]
	 \foreach \x in {1,2,5,\K} {
		 \draw[thin] (\x,0) -- (\x,\TOP);
		 }
	\Caps[.15,\K+.85][0,\TOP][\K]
	\Label[0,\TOP][3][\footnotesize $i$]
	\Label[0,\TOP][4][\footnotesize$i$+1]
}
\qquad \text{for $i=1, \dots, k-1$,}
}
\end{equation}
and the multiplication of braid diagrams is given by placing one diagram on top of another (multiplying generators left-to-right corresponds to stacking diagrams top-to-bottom).

In some applications (notably to the Schur-Weyl duality of \cite[\S5]{DR}), 
it is useful to move the rightmost  pole to the left by conjugating by the diagram
\begin{equation}\label{eq:sigma}
\sigma = 
\TikZ{[scale=.5]
	\draw (-.7,1) .. controls (-.7,.15) .. (0,.15) -- (6,.15) .. controls (7,.15) .. (7,-1);
	\draw (-1,1) .. controls (-1,-.15) .. (0,-.15)-- (6,-.15) .. controls (7-.3,-.15) .. (7-.3,-1);
\Pole[.15][-1,1]
 \foreach \x in {1,...,6} {\draw[style=over] (\x,-1) -- (\x,1);}
\Tops[.15, -.85][1][6]
\Bottoms[.15, 6+.85][-1][6]
}\ .
\end{equation}
Define
\begin{equation}\label{DefnTiY1}
{\def\TOP{2}\def\K{6}
T_i= \sigma \bar{T}_i \sigma^{-1}=
\TikZ{[scale=.5]
	\Pole[-.85][0,2]
	\Pole[.15][0,2]
	\Under[3,0][4,2]
	\Over[4,0][3,2]
	 \foreach \x in {1,2,5,\K} {
		 \draw[thin] (\x,0) -- (\x,\TOP);
		 }
	\Caps[.15,-.85][0,\TOP][\K]
	\Label[0,\TOP][3][\footnotesize $i$]
	\Label[0,\TOP][4][\footnotesize$i$+1]
}\ , \qquad 
Y_1= \sigma \bar{T}_0 \sigma^{-1} =
\TikZ{[scale=.5]
	\Pole[-.85][0,2][\K]
	\Pole[.15][0,1]
	\Over[1,0][-.3,1]
	\Under[-.3,1][1,2]
	\Pole[.15][1,2]
	 \foreach \x in {2,...,\K} {
	 \draw[thin] (\x,0) -- (\x,\TOP);
	 }
	\Caps[.15,-.85][0,\TOP][\K]
}\ ,}
\end{equation}
and
\begin{equation}\label{DefnX1}
{\def\TOP{2} \def\K{6}
X_1 =   T_1^{-1} T_2^{-1} \cdots T^{-1}_{k-1} \sigma \bar{T}_k \sigma^{-1} T_{k-1} \cdots T_1 =   
\TikZ{[scale=.5]
	\Pole[-.85][0,1]
	\Over[1,0][-1.3,1]
	\Under[-1.3,1][1,2]
	\Pole[-.85][1,2]
	\Pole[.15][0,2]
	 \foreach \x in {2,...,\K}  {
		 \draw[thin] (\x,0) -- (\x,\TOP);
		 }
	\Caps[.15,-.85][0,\TOP][\K]
	}\ .
}\end{equation}
\noindent 
Define 
\begin{equation}\label{BraidMurphy}{
\def\TOP{2}\def\K{6}
Z_1=X_1Y_1\quad\hbox{and}\quad
		 Z_i = T_{i-1} T_{i-2} \cdots T_1 X_1 Y_1 T_1 \cdots T_{i-1} =
\TikZ{[scale=.5]
		\Pole[-.85][0,1]
		\Pole[.15][0,1]
		 \foreach \x in {1,2} {\draw[thin] (\x,0) -- (\x,1);}
		\Over[3,0][-1.3,1]
		\Under[-1.3,1][3,2]
		\Pole[-.85][1,2]
		\Pole[.15][1,2]
		\foreach \x in {1,2} { \draw[thin, style=over] (\x,1) -- (\x,2); }
		\foreach \x in {4,...,\K} {\draw[thin] (\x,0) -- (\x,\TOP);}
		\Caps[.15,-.85][0,\TOP][\K]
		\Label[0,\TOP][3][{\footnotesize $i$}]
}\ ,}\end{equation}
for $i=2, \ldots, k$.
Let
{\def\TOP{1.75} \def\K{6}$$P= \TikZ{[scale=.5]
		\PoleTwist[0,\TOP*.5]\PoleTwist[\TOP*.5, \TOP]
		\foreach \x in {1,...,\K} {\draw[thin] (\x,0) -- (\x,\TOP);}
		\Caps[-.85,.15][0,\TOP][\K]}\ .$$}
The \emph{extended affine braid group} is the group $\cB_k^{\mathrm{ext}}$ 
generated by $\cB_k$ and $P$ with the additional relations
\begin{align}
PX_1P^{-1} = Z_1^{-1}X_1Z_1, \qquad
PY_1P^{-1} = Z_1^{-1}Y_1Z_1, 
\label{Pcomm1} 
\\
PZ_1P^{-1} = Z_1,\qquad \text{ and } \qquad PT_iP^{-1} = T_i\ \hbox{for $i=1, \ldots, k-1$.}
\label{Pcomm2} 
\end{align}
The element 
\begin{equation}\label{eq:Z0iscentral}
Z_0 = P Z_1 \cdots Z_k \quad\hbox{is central in $\cB_k^\mathrm{ext}$}
\tag{c0}
\end{equation}
since the group $\cB_k^{\mathrm{ext}}$ is a subgroup of the braid group on $k+2$ strands,
and
$Z_0$ is the generator of the center of the braid group on $k+2$ strands (see \cite[Theorem 4.2]{GM}).
So
\begin{equation}
\hbox{if}\ \cD = \{ Z_0^{j}\ |\ j\in \ZZ\}
\qquad\hbox{then}\qquad
\cB_k^{\mathrm{ext}} = \cD \times \cB_k,
\quad\hbox{with $\cD \cong \ZZ$.}
\label{bdgpproduct}
\end{equation}

\subsection{The extended affine Hecke algebra $H_k^{\mathrm{ext}}$ of type $C_k$}

Fix $a_1, a_2, b_1, b_2, t ^{\frac12}\in \CC^\times$ and let
\begin{equation}\label{eq:ab-to-t0tk}
t_k^{\frac12} =  a_1^{\frac12}(-a_2)^{-\frac12},
\qquad
t_0^{\frac12} = b_1^{\frac12}(-b_2)^{-\frac12}.
\end{equation}
The \emph{extended two boundary Hecke algebra $H_k^\ext$ with parameters $t^{\frac12}$,
$t_0^{\frac12}$ and $t_k^{\frac12}$} is the quotient of $\cB_k^\ext$ by the relations 
\begin{equation}
(X_1 - a_1)(X_1-a_2) = 0,
\quad
(Y_1 - b_1)(Y_1-b_2) = 0,
\quad\hbox{and}\quad 
(T_i - t^{\half})(T_i+ t^{-\half}) = 0,
\label{Heckedefn} \tag{H}
\end{equation}
for $i = 1, \dots, k-1$. 
Let
\begin{equation}
T_0 = b_1^{-\frac12}(-b_2)^{-\frac12} Y_1,
\qquad
T_k = a_1^{-\frac12}(-a_2)^{-\frac12} T_{k-1}\cdots T_2T_1X_1T_1^{-1}T_2^{-1}\cdots T_{k-1}^{-1}.
\end{equation}
Then $
\TikZ{
	\foreach \x in {0, 1, 2, 4,5,6}{
		\draw (\x,0) circle (2.5pt);
		\node(\x) at (\x,0){};
		}
	\foreach \x in {0, 1, 2}
		{\node[label=above:$T_\x$] at (\x){};}
	\node[label=above:$T_{k-2}$] at (4){};
	\node[label=above:$T_{k-1}$] at (5){};
	\node[label=above:$T_{k}$] at (6){};
	\draw[double distance = 2pt] (0)--(1) (5)--(6);
	\draw (1)--(2) (4)--(5);
	\draw[dashed] (2) to (4);
}
$
and
\begin{equation}
(T_0- t_0^{\frac12})(T_0 + t_0^{-\frac12}) = 0,
\quad 
(T_i - t^{\frac12})(T_i +  t^{-\frac12})=0,
\quad 
(T_k- t_k^{\frac12})(T_k + t_k^{-\frac12}) = 0,
\label{DRTBHeckerelations}
\end{equation}
for $i\in \{1, \ldots, k-1\}$. 

Let $a, a_0, a_k \in \CC^\times$ 
and define
\begin{equation}
a_0e_0 = T_0-t_0^{\frac12},
\qquad
ae_i = T_i-t^{\frac12},
\qquad
a_ke_k = T_k-t_k^{\frac12},
\label{edefin}
\end{equation}
for $i\in \{1, \ldots, k-1\}$.
The relations in \eqref{DRTBHeckerelations}
are equivalent to
\begin{equation}
T_0e_0 = -t_0^{-\frac12}e_0,
\qquad
T_i e_i = -t^{-\frac12}e_i,
\qquad
T_k e_k = -t_k^{-\frac12}e_k,
\label{signrep}
\end{equation}
and to
\begin{equation}
e_0^2 = \frac{-(t_0^{\frac12}+t_0^{-\frac12})}{a_0} e_0,
\qquad
e_i^2 = \frac{-(t^{\frac12}+t^{-\frac12})}{a}e_i,
\qquad
e_k^2 = \frac{-(t_k^{\frac12}+t_k^{-\frac12})}{a_k} e_k,
\label{eisquared}
\end{equation}
for $i \in \{1, \dots, k-1\}$. 

\begin{remark}
\label{bdequivi}
For $i\in \{1, \ldots, k-2\}$, using $T_i = ae_i+t^{\frac12}$ to expand
$T_iT_{i+1}T_i$ and $T_{i+1}T_iT_{i+1}$ in terms of the $e_i$ shows that 
in the presence of the relations \eqref{quadraticT0andTi},
$$
T_iT_{i+1}T_i = T_{i+1}T_iT_{i+1}
\quad\hbox{is equivalent to}\quad
a^3 e_ie_{i+1}e_i - a e_i = a^3 e_{i+1}e_ie_{i+1} - a e_{i+1}.
$$
Similarly, 
$T_0T_1T_0T_1=T_1T_0T_1T_0$ is equivalent to
$$
a_0^2a^2 e_0e_1e_0e_1-a_0 a (t_0^{-\frac12}t^{\frac12}+t_0^{\frac12}t^{-\frac12} )e_0e_1
=a_0^2a^2 e_1e_0e_1e_0
- a_0 a (t_0^{-\frac12}t^{\frac12}+t_0^{\frac12}t^{-\frac12} )e_1e_0.
$$
In the case that
$a_0^2a^2 =a_0 a(t_0^{-\frac12}t^{\frac12}+t_0^{\frac12}t^{-\frac12})$
then 
$$T_0T_1T_0T_1=T_1T_0T_1T_0
\quad\hbox{is equivalent to}\quad
e_0e_1e_0e_1-e_0e_1 = e_1e_0e_1e_0-e_1e_0.
$$
In the case that $a^3=a$ then
$$T_iT_{i+1}T_i = T_{i+1}T_iT_{i+1}
\quad\hbox{is equivalent to}\quad
e_ie_{i+1}e_i - e_i = e_{i+1}e_ie_{i+1}-e_{i+1}.
$$
This is the explanation for why the favorite choices of $a$, $a_0$ and $a_k$ satisfy
$$a = \pm1,
\qquad
a_0a = t_0^{-\frac12}t^{\frac12}+t_0^{\frac12}t^{-\frac12} = [\![t_0t^{-1}]\!]
\qquad\hbox{and}\qquad
a_ka = t_k^{-\frac12}t^{\frac12}+t_k^{\frac12}t^{-\frac12} = [\![t_kt^{-1}]\!],
$$
where we use the notation
\begin{equation}
[\![t^s]\!] = (t^{\frac{s}{2}} + t^{-\frac{s}{2}}) 
= \Big(\frac{t^s-t^{-s}}{t^{\frac12}-t^{-\frac12}} \Big)
\Big( \frac{t^{\frac12}-t^{-\frac12}}{t^{\frac{s}{2}}-t^{-\frac{s}{2}}} \Big)
= \frac{[2s]}{[s]}.
\label{notation}
\end{equation}
\end{remark}

\subsection{The Bernstein presentation of $H_k^{\mathrm{ext}}$}\label{Bernsteingens}

The \emph{Murphy elements for $H^{\mathrm{ext}}_k$} are 
$$
W_1 = T_1^{-1} T_2^{-1} \cdots T_{k-1}^{-1} T_k T_{k-1} \cdots T_2 T_1 T_0
\qquad\hbox{and}\qquad
W_j = T_j W_{j-1} T_j, 
$$
for $j\in \{2, \dots, k\}$.  
Let
$$W_0 = PW_1\cdots W_k.$$

\begin{thm}\label{thm:2bdryHeckePresentation} (See, for example, \cite[Theorem 2.2]{DR}.)
Fix $t_0, t_k, t \in \CC^\times$ and use notations for relations as defined in \eqref{braidlengths}. 
The extended affine Hecke algebra $H^\ext_k$ defined in 
\eqref{Heckedefn}
is presented by generators, 
$T_0$, $T_1$, \dots, $T_{k-1}$, $W_0$, $W_1$, \dots, $W_k$ and relations
\begin{equation}
W_0\in Z(H_k^{\mathrm{ext}}), \qquad\qquad
\TikZ{
	\foreach \x in {0,1, 2, 4,5}{
		\draw (\x,0) circle (2.5pt);
		\node (\x) at (\x,0){};
		}
	\foreach \x in {1, 2}{
		\node[label=above:$T_\x$] at (\x,0){};
		\node[label=above:$T_{k-\x}$] at (6-\x,0){};}
	\node[label=above:$T_0$] at (0){};
	\draw[double distance = 2pt] (0)--(1);
	\draw (1)--(2) (4)--(5);
	\draw[dashed] (2) to (4);
\label{PresBraidRelsB1}\tag{B1}
};
\end{equation}
\begin{equation}
W_iW_j = W_j W_i, \qquad\hbox{for $i,j= 0, 1,\ldots, k$;}
\label{WcommuteB2}\tag{B2}
\end{equation}
\begin{equation}
T_0W_j = W_j T_0,\quad\hbox{for $j\ne 1$;}
\label{T0WcommuteB3}\tag{B3}
\end{equation}
\begin{equation}
T_iW_j = W_j T_i\ \hbox{for $i=1, \ldots, k-1$ and $j=1,\ldots, k$ with $j\ne i, i+1$;}
\label{TiWcommuteB4}\tag{B4}
\end{equation}
\begin{equation}
(T_0- t_0^{\frac12})(T_0 + t_0^{-\frac12}) = 0, \ 
\quad \hbox{and}\quad 
(T_i - t^{\frac12})(T_i +  t^{-\frac12})=0 \ \hbox{for $i = 1, \dots, k-1$;}
\label{quadraticT0andTi}\tag{H}
\end{equation}
for $i=1,\ldots, k-1$,  
\begin{equation}
T_iW_i = W_{i+1}T_i + (t^{\frac12} - t^{-\frac12})\frac{W_i - W_{i+1}}{1-W_iW_{i+1}^{-1}},
\qquad
T_iW_{i+1} = W_iT_i +(t^{\frac12}-t^{-\frac12})\frac{W_{i+1}-W_i}{1-W_iW_{i+1}^{-1}}, 
\label{TipastWi}\tag{C1}
\end{equation}
\begin{align}\text{and}\qquad
T_0W_1 &= W_1^{-1}T_0 + \left((t_0^{\frac12}-t_0^{-\frac12})  + (t_k^{\frac12} -t_k^{-\frac12})W_1^{-1}\right) \frac{W_1 - W_1^{-1}}{1-W_1^{-2}}.
\label{T0pastW1}\tag{C2}
\end{align}
\end{thm}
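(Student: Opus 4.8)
The plan is to treat this as an identification of two presentations of the same algebra, by exhibiting mutually inverse homomorphisms between $H_k^\ext$ as defined in \eqref{Heckedefn} and the algebra $\tilde H$ abstractly presented by the generators $T_0,\dots,T_{k-1},W_0^{\pm1},\dots,W_k^{\pm1}$ subject to \eqref{PresBraidRelsB1}--\eqref{T0pastW1}. First I would fix the dictionary. The $T_i$ for $i=1,\dots,k-1$ are the braid generators and $T_0=b_1^{-\frac12}(-b_2)^{-\frac12}Y_1$; the Murphy elements $W_1,\dots,W_k$ are the explicit words defined just before the theorem. A short computation (using $T_1^{-1}\cdots T_{k-1}^{-1}\,T_{k-1}\cdots T_1=1$ and the definition of $T_k$) shows $W_1$ is a scalar multiple of $Z_1=X_1Y_1$, and more generally $W_j$ of $Z_j$; finally $W_0=PW_1\cdots W_k$ is a scalar multiple of $Z_0$. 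Conversely $Y_1$, $X_1$, the $T_i$, and (via $P=W_0(W_1\cdots W_k)^{-1}$ together with \eqref{bdgpproduct}) also $P$ are recovered from the new generators, which will give surjectivity of $\tilde H\to H_k^\ext$ once that map is well defined. The proof then splits into (i) verifying the relations inside $H_k^\ext$, and (ii) a completeness count.

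For step (i), the quadratic relations \eqref{quadraticT0andTi} are immediate from \eqref{DRTBHeckerelations}. The braid relations \eqref{PresBraidRelsB1} among $T_0,\dots,T_{k-1}$ are part of the braid presentation of $\cB_k^\ext$ (the doubled edge being $T_0T_1T_0T_1=T_1T_0T_1T_0$), and the far-commutations \eqref{T0WcommuteB3} and \eqref{TiWcommuteB4} of a $T_i$ with a distant $W_j$ follow from the locality built into those braid relations together with the explicit form of the $W_j$. The centrality of $W_0$ in \eqref{PresBraidRelsB1} is exactly \eqref{eq:Z0iscentral} transported through the dictionary. The commutativity \eqref{WcommuteB2} of the $W_j$ is the first genuinely structural point; I would prove it by induction from the inductive definition of the $W_j$, the braid relations, and \eqref{quadraticT0andTi}, thereby also identifying the subalgebra they generate with a Laurent polynomial ring $\CC[W_1^{\pm},\dots,W_k^{\pm}]$, which in turn legitimizes the denominators appearing in \eqref{TipastWi} and \eqref{T0pastW1}.

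The relations \eqref{TipastWi} and \eqref{T0pastW1} are the heart of the calculation. For \eqref{TipastWi} I would start from the recursion relating $W_i$ and $W_{i+1}$ and the quadratic relation $T_i^2=(t^{\frac12}-t^{-\frac12})T_i+1$, and rearrange $T_iW_i-W_{i+1}T_i$ into the stated rational expression exactly as in the classical type-$A$ Bernstein--Lusztig computation, with \eqref{WcommuteB2} ensuring $1-W_iW_{i+1}^{-1}$ is invertible and commutes appropriately. The relation \eqref{T0pastW1} is the one genuinely new feature of the unequal-parameter type-$C$ setting, and I expect it to be the main obstacle: $W_1\propto X_1Y_1$ involves the left-boundary generator $T_0\propto Y_1$ and (through $T_k$, hence $X_1$) the right-boundary data at once, so moving $T_0$ past $W_1$ must absorb both boundary parameters $t_0$ and $t_k$. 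I would derive it from the rank-two affine relation $T_0T_1T_0T_1=T_1T_0T_1T_0$ together with the quadratic relations for $T_0$ and for $T_k$, first using \eqref{T0WcommuteB3}--\eqref{TiWcommuteB4} to strip off the factors of $W_1$ that commute with $T_0$ so as to reduce to an essentially rank-one-plus-boundary computation; the delicate bookkeeping is tracking how $t_0$ and $t_k$ combine into the coefficient $(t_0^{\frac12}-t_0^{-\frac12})+(t_k^{\frac12}-t_k^{-\frac12})W_1^{-1}$ and confirming that $1-W_1^{-2}$ is the correct denominator.

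Finally, for completeness (step (ii)) I would show that $\tilde H$ is spanned over $\CC[W_0^{\pm}]$ by the elements $W_1^{\mu_1}\cdots W_k^{\mu_k}\,T_w$, with $\mu\in\ZZ^k$ and $T_w$ ranging over a reduced-word transversal of the finite Weyl group $\cW_0$ of \eqref{eq:Weyl-group}: the relations \eqref{TipastWi} and \eqref{T0pastW1} let one commute every $T_i$ rightward past every $W_j$ up to lower terms, after which \eqref{PresBraidRelsB1} and \eqref{quadraticT0andTi} straighten the remaining $T$-word into the transversal. Comparing this spanning set against the known Bernstein/PBW basis $\{X^\lambda T_w\}$ of $H_k^\ext$ underlying the splitting \eqref{bdgpproduct} shows the surjection $\tilde H\to H_k^\ext$ is also injective, which identifies the two presentations and completes the proof.
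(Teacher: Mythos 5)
You should first note that the paper itself contains no proof of this theorem: it is quoted from \cite[Theorem 2.2]{DR}, so there is no internal argument to compare against, and your plan (mutually inverse homomorphisms plus a PBW straightening) is the standard route taken in that reference. Within your step (i), everything you assert can be made to work, but you substantially overestimate the difficulty of the cross relations. Since $T_1^{-1}\cdots T_{k-1}^{-1}T_kT_{k-1}\cdots T_1 = a_1^{-\frac12}(-a_2)^{-\frac12}X_1$, one has $W_1T_0^{-1} = a_1^{-\frac12}(-a_2)^{-\frac12}X_1$ on the nose, so by \eqref{eq:ab-to-t0tk} the quadratic relation $(X_1-a_1)(X_1-a_2)=0$ from \eqref{Heckedefn} says exactly $(W_1T_0^{-1}-t_k^{\frac12})(W_1T_0^{-1}+t_k^{-\frac12})=0$; multiplying out, right-multiplying by $T_0$, and using $T_0=T_0^{-1}+(t_0^{\frac12}-t_0^{-\frac12})$ gives \eqref{T0pastW1} in two lines, because $\frac{W_1-W_1^{-1}}{1-W_1^{-2}}=W_1$. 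No rank-two braid relation is needed at all, and your phrase about ``stripping off the factors of $W_1$ that commute with $T_0$'' is off, since $T_0$ commutes with $W_j$ only for $j\ne 1$. Similarly $\frac{W_i-W_{i+1}}{1-W_iW_{i+1}^{-1}}=-W_{i+1}$ and $\frac{W_{i+1}-W_i}{1-W_iW_{i+1}^{-1}}=W_{i+1}$, so \eqref{TipastWi} is immediate from the recursion $W_{i+1}=T_iW_iT_i$ (note the displayed recursion in the paper should be read as $W_j=T_{j-1}W_{j-1}T_{j-1}$, matching $Z_j=T_{j-1}Z_{j-1}T_{j-1}$ in \eqref{BraidMurphy}) together with $T_i^{-1}=T_i-(t^{\frac12}-t^{-\frac12})$: in particular the ``denominators'' are Laurent polynomials and no localization needs legitimizing. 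Also, \eqref{WcommuteB2} holds already in the braid group $\cB_k^{\mathrm{ext}}$ (the $Z_i$ visibly commute as diagrams), so no induction through \eqref{quadraticT0andTi} is required.

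The genuine gap is in your step (ii). The splitting \eqref{bdgpproduct} is a statement about the braid group, not a basis theorem for $H_k^{\mathrm{ext}}$, and the ``known Bernstein/PBW basis $\{X^\lambda T_w\}$'' you invoke for $H_k^{\mathrm{ext}}$ is precisely the content of the theorem you are proving: if you already know that the monomials $W_0^{j}W_1^{\mu_1}\cdots W_k^{\mu_k}T_w$ form a basis of $H_k^{\mathrm{ext}}$, the presentation follows trivially, and if you do not, your spanning argument in $\tilde H$ proves surjectivity of $\tilde H\to H_k^{\mathrm{ext}}$ but gives injectivity only if you can show those monomials are linearly independent \emph{in $H_k^{\mathrm{ext}}$ as defined in \eqref{Heckedefn}}, i.e.\ in the quotient of $\CC\cB_k^{\mathrm{ext}}$ by the three quadratic relations. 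That independence is the one nontrivial input and must be grounded separately --- either by identifying \eqref{Heckedefn} with the Iwahori--Matsumoto presentation of the extended affine Hecke algebra of type $(C_k^\vee,C_k)$ and invoking the basis $\{T_w\}$ indexed by the extended affine Weyl group, or by exhibiting a faithful representation (e.g.\ the polynomial representation) that separates the monomials. As written, your final sentence quietly assumes the conclusion; naming and supplying this independence input is what separates your sketch from a proof.
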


The \emph{two boundary Hecke algebra $H_k$ with parameters $t^{\frac12}$,
$t_0^{\frac12}$ and $t_k^{\frac12}$} is the subalgebra of $H_k^{\mathrm{ext}}$
generated by $T_0, T_1, \ldots, T_k$.  Then
\begin{equation}
H_k^{\mathrm{ext}} = H_k \otimes \CC[W_0^{\pm1}]
\qquad\hbox{as algebras,}
\label{Hksplitting}
\end{equation}
and, 
as proved for example in \cite[Theorem 2.3]{DR}, the element
\begin{equation}
Z = W_1+W_1^{-1}+W_2+W_2^{-1}+\cdots+W_k+W_k^{-1}
\qquad\hbox{is central in $H^{\mathrm{ext}}_k$.}
\label{Zdefn}
\end{equation}

\subsection{The elements $\tau_i$}

Define
\begin{equation}
\tau_0 
= T_0 - \frac{ (t_0^{\frac12} - t_0^{-\frac12}) + (t_k^{\frac12} - t_k^{-\frac12})W_1^{-1} }
{1-W_1^{-2}},
\qquad\hbox{and}\qquad
\tau_i = T_i - \frac{t^{\frac12} - t^{-\frac12} }{1-W_iW_{i+1}^{-1} },
\label{intertwinerdefs}
\end{equation}
for $i\in \{1, \ldots, k-1\}$.  
Evoking the notation of \cite[\S 3]{DR}, reviewed later in \S\ref{sec:Calibrated H reps}, 
let
\begin{equation}
\begin{array}{lll}
f_{2\varepsilon_i} = (1-W_i^{-1})(1+W_i^{-1})=1-W_i^{-2}, \\
f_{\varepsilon_i-r_2}
=(1-t_0^{\frac12}t_k^{\frac12}W_i^{-1}), \quad
&f_{\varepsilon_i-r_1}
=(1+t_0^{\frac12}t_k^{-\frac12}W_i^{-1}), \\
f_{-\varepsilon_i-r_2}
=(1-t_0^{\frac12}t_k^{\frac12}W_i), \quad
&f_{-\varepsilon_i-r_1}
=(1+t_0^{\frac12}t_k^{-\frac12}W_i), \\
f_{\varepsilon_i-\varepsilon_j} = 1 - W_iW_j^{-1}, 
&f_{\varepsilon_i-\varepsilon_j+1} = 1 - tW_iW_j^{-1},
\end{array}
\end{equation}
for $i,j\in \{1, \ldots, k\}$.
Then 
\begin{equation}
a_0e_0 = \tau_0
- t_0^{-\frac12}\frac{f_{\varepsilon_1-r_1} f_{\varepsilon_1-r_2} }{f_{2\varepsilon_1}} 
\qquad\hbox{and}\qquad
a_ie_i = \tau_i
- t^{-\frac12}\frac{f_{\varepsilon_i-\varepsilon_{i+1}+1}}{f_{\varepsilon_i-\varepsilon_{i+1}}}, 
\label{eiintaui}
\end{equation}
and,  as proved in \cite[Proposition 2.4]{DR},
$\begin{tikzpicture}
	\foreach \x in {0,1, 2, 4,5}{
		\draw (\x,0) circle (2.5pt);
		}
	\foreach \x in {1, 2}{
		\node[label=above:$\tau_{\x}$] at (\x,0){};
		\node[label=above:$\tau_{k-\x}$] at (6-\x,0){};}
	\node[label=above:$\tau_0$] at (0){};
	\draw[double distance = 2pt] (0)--(1);
	\draw (1)--(2) (4)--(5);
	\draw[dashed] (2) to (4);
\end{tikzpicture}
$
and
\begin{equation*}
\begin{array}{ccc}
\displaystyle{ 
\tau_0^2 = W_1^{-2}t_0^{-1} 
\frac{f_{\varepsilon_1-r_1}f_{-\varepsilon_1-r_1}f_{\varepsilon_1-r_2}f_{-\varepsilon_1-r_2}}
{f_{2\varepsilon_1}^2}  ,
}
&W_1\tau_0 = \tau_0W_1^{-1},
&W_r \tau_0 = \tau_0 W_r, \\ \\
\displaystyle{
\tau_i^2 = t^{-1} 
\frac{f_{\varepsilon_i-\varepsilon_{i+1}+1}f_{\varepsilon_{i+1}-\varepsilon_i+1}}
{f_{\varepsilon_i-\varepsilon_{i+1}}f_{\varepsilon_{i+1}-\varepsilon_i}} ,
}
&W_i \tau_i = \tau_i W_{i+1},\quad
W_{i+1} \tau_i = \tau_i W_i, 
&W_j\tau_i = \tau_i W_j, 
\end{array} 
\end{equation*}
for $r, j\in \{1, \ldots, k\}$ with $r\ne 1$ and $j\ne i,i+1$.

\subsection{The elements $p_i^{(1^3)}$, $p_0^{(\emptyset, 1^2)}$ and $p_0^{(1^2,\emptyset)}$}

Fix $i\in \{1, \ldots, k-2\}$.
Let
\begin{align*}
HS_3 &\hbox{\quad be the subalgebra of $H^{\mathrm{ext}}_k$ generated by $T_i$ and $T_{i+1}$, and let} \\
HB_2 &\hbox{\quad be the subalgebra of $H^{\mathrm{ext}}_k$ generated by $T_0$
and $T_1$.}
\end{align*}
The idempotent $p_i^{(1^3)}$ in $HS_3$ 
and the idempotents $p_0^{(\emptyset, 1^2)}$ and $p_0^{(1^2, \emptyset)}$
in $HB_2$ are uniquely determined by the equations
\begin{equation}
(p_i^{(1^3)})^2=p_i^{(1^3)}, \qquad
(p_0^{(\emptyset, 1^2)})^2=p_0^{(\emptyset, 1^2)}
\qquad
(p_0^{(1^2, \emptyset)})^2=p_0^{(1^2, \emptyset)},
\label{idempconds}
\end{equation}
and
\begin{equation}
\begin{array}{lll}
T_i p_i^{(1^3)} = -t^{-\frac12}p_i^{(1^3)},
&\quad &T_{i+1} p_i^{(1^3)} = -t^{-\frac12}p_i^{(1^3)}, \\
T_0 p_0^{(\emptyset, 1^2)} = -t_0^{-\frac12} p_0^{(\emptyset, 1^2)}
&\quad &T_1 p_0^{(\emptyset, 1^2)} = -t^{-\frac12} p_0^{(\emptyset, 1^2)}, \\
T_0 p_0^{(1^2, \emptyset)} = t_0^{\frac12} p_0^{(1^2, \emptyset)},
&\quad &T_1 p_0^{(1^2, \emptyset)} = -t^{-\frac12} p_0^{(1^2, \emptyset)}.
\end{array}
\label{piconds}
\end{equation}
The conditions in \eqref{piconds} are equivalent to 
\begin{equation}
\begin{array}{lll}
ae_i p_i^{(1^3)} = -(t^{\frac12}+t^{-\frac12}) p_i^{(1^3)},
&\quad &ae_{i+1} p_i^{(1^3)} = -(t^{\frac12}+t^{-\frac12}) p_i^{(1^3)}, \\
a_0e_0 p_0^{(\emptyset, 1^2)} = -(t_0^{\frac12}+t_0^{-\frac12}) p_0^{(\emptyset, 1^2)}, 
&\quad &ae_1 p_0^{(\emptyset, 1^2)} = -(t^{\frac12}+t^{-\frac12}) p_0^{(\emptyset, 1^2)}, \\
a_0e_0 p_0^{(1^2, \emptyset)} = 0,
&\quad &ae_1 p_0^{(1^2, \emptyset)} = -(t^{\frac12}+t^{-\frac12}) p_0^{(1^2, \emptyset)}.
\end{array}
\label{pieiconds}
\end{equation}

\begin{prop} \label{idempexpansion} Let $p_i^{(1^3)}$, $p_0^{(\emptyset, 1^2)}$ and $p_0^{(1^2,\emptyset)}$ be as defined in \eqref{idempconds} and \eqref{piconds}
and let
$$N = t^{-\frac12}(1+t)(1+t+t^2)
\quad\hbox{and}\quad
N_0 =N_0'=t_0^{-1}t^{-1}(1+t_0)(1+t)(1+t_0t).
$$
Then the expansions of these idempotents in terms of the three favored generating sets is given by
\begin{align*}
N p_i^{(1^3)}
&= T_iT_{i+1}T_i - t^{\frac12}T_iT_{i+1} - t^{\frac12}T_{i+1}T_i + t T_i +t T_{i+1} - t^{\frac32} \\
&= a^3e_ie_{i+1}e_i - a e_i = a^3e_{i+1}e_ie_{i+1}-ae_{i+1} \\
&=\tau_i\tau_{i+1}\tau_i 
-  t^{-\frac12} \tau_{i+1}\tau_i 
\frac{f_{\vep_{i+1}-\vep_{i+2}+1}}{f_{\vep_{i+1}-\vep_{i+2}}} 
- t^{-\frac12} \tau_i\tau_{i+1}
\frac{f_{\vep_{i+1}-\vep_i+1}}{f_{\vep_{i+1}-\vep_i}}
\\
&\qquad
+ t^{-1} \tau_i 
\frac{f_{\vep_{i+1}-\vep_{i+2}+1} f_{\vep_{i+2}-\vep_i+1} } 
{f_{\vep_{i+1}-\vep_{i+2}} f_{\vep_{i+2}-\vep_i} } 
+ t^{-1} \tau_{i+1}  
\frac{f_{\vep_{i+2}-\vep_i+1} f_{\vep_{i+1}-\vep_i+1} } 
{f_{\vep_{i+2}-\vep_i} f_{\vep_{i+1}-\vep_i}}  
\\
&\qquad
- t^{-\frac32} 
\frac{f_{\vep_{i+1}-\vep_{i+2}+1} f_{\vep_{i+2}-\vep_i+1} f_{\vep_{i+1}-\vep_i+1} } 
{f_{\vep_{i+1}-\vep_{i+2}} f_{\vep_{i+2}-\vep_i} f_{\vep_{i+1}-\vep_i+1} } 
,
\end{align*}
\begin{align*}
&N_0 p_0^{(\emptyset, 1^2)}
=T_0T_1T_0T_1 - t_0^{\frac12}T_1T_0T_1 - t^{\frac12}T_0T_1T_0 
+t_0^{\frac12}t^{\frac12}T_0T_1 + t_0^{\frac12}t^{\frac12}T_1T_0
- t_0t^{\frac12}T_1 - t_0^{\frac12}tT_0 + t_0t \\
&= a_0^2a^2 e_0e_1e_0e_1 - a_0a(t_0^{-\frac12}t^{\frac12}+t_0^{\frac12}t^{-\frac12} ) e_0e_1 
= a_0^2a^2 e_1e_0e_1e_0 - a_0a(t_0^{-\frac12}t^{\frac12}+t_0^{\frac12}t^{-\frac12} ) e_1e_0 \\
&= \tau_0\tau_1\tau_0\tau_1
- t_0^{\frac12} \tau_1\tau_0\tau_1 
\frac{f_{\vep_1-r_2}f_{\vep_1-r_1} }{f_{2\vep_1}}
- t^{-\frac12} \tau_0\tau_1\tau_0 
\frac{f_{\vep_2-\vep_1+1}}{f_{\vep_2-\vep_1}}
\\
&\qquad
+ t_0^{\frac12}t^{-\frac12} \tau_0\tau_1 
\frac{f_{-\vep_2-\vep_1+1}}{f_{-\vep_2-\vep_1}}
\frac{f_{\vep_1-r_2}f_{\vep_1-r_1} }{f_{2\vep_1}}
- t_0^{\frac12}t^{-\frac12} \tau_1\tau_0 
\frac{f_{\vep_2-r_2}f_{\vep_2-r_1} }{f_{2\vep_2}}
\frac{f_{\vep_2-\vep_1+1}}{f_{\vep_2-\vep_1}}
\\
&\qquad
- t_0 t^{-\frac12} \tau_1 
\frac{f_{\vep_2-r_2}f_{\vep_2-r_1} }{f_{2\vep_2}}
\frac{f_{-\vep_2-\vep_1+1}}{f_{-\vep_2-\vep_1}}
\frac{f_{\vep_1-r_2}f_{\vep_1-r_1} }{f_{2\vep_1}}
- t_0^{\frac12} t^{-1} \tau_0 
\frac{f_{-\vep_2-\vep_1+1}}{f_{-\vep_2-\vep_1}}
\frac{f_{\vep_2-r_2}f_{\vep_2-r_1} }{f_{2\vep_2}}
\frac{f_{\vep_2-\vep_1+1}}{f_{\vep_2-\vep_1}}
\\
&\qquad
+ t_0 t^{-1}
\frac{f_{\vep_1-r_2}f_{\vep_1-r_1} }{f_{2\vep_1}}
\frac{f_{-\vep_2-\vep_1+1}}{f_{-\vep_2-\vep_1}}
\frac{f_{\vep_2-r_2}f_{\vep_2-r_1} }{f_{2\vep_2}}
\frac{f_{\vep_2-\vep_1+1}}{f_{\vep_2-\vep_1}}, 
\end{align*}
and
\begin{align*}
N_0' p_0^{(1^2,\emptyset)}
&=T_0T_1T_0T_1 + t_0^{-\frac12}T_1T_0T_1 - t^{\frac12}T_0T_1T_0 - t_0^{-\frac12}t^{\frac12}T_0T_1
- t_0^{-\frac12}t^{\frac12}T_1T_0 - t_0t^{\frac12}T_1 + t_0^{-\frac12}tT_0 + t_0t \\
&= (a_0^2a^2 e_0e_1e_0e_1 - a_0a(t_0^{-\frac12}t^{\frac12}+t_0^{\frac12}t^{-\frac12} ) e_0e_1)
- (a_0a^2e_1e_0e_1-a(t_0^{-\frac12}t^{\frac12}+t_0^{\frac12}t^{-\frac12} )e_1) \\
&= \tau_0\tau_1\tau_0\tau_1
- t_0^{\frac12} \tau_1\tau_0\tau_1 W_1^{-2}
\frac{f_{-\vep_1-r_2}f_{-\vep_1-r_1} }{f_{2\vep_1}}
- t^{-\frac12} \tau_0\tau_1\tau_0 
\frac{f_{\vep_2-\vep_1+1}}{f_{\vep_2-\vep_1}}
\\
&\qquad
+ t_0^{\frac12}t^{-\frac12} \tau_0\tau_1 W_1^{-2}
\frac{f_{-\vep_2-\vep_1+1}}{f_{-\vep_2-\vep_1}}
\frac{f_{-\vep_1-r_2}f_{-\vep_1-r_1} }{f_{2\vep_1}}
+ t_0^{\frac12}t^{-\frac12} \tau_1\tau_0 W_2^{-2}
\frac{f_{-\vep_2-r_2}f_{-\vep_2-r_1} }{f_{2\vep_2}}
\frac{f_{\vep_2-\vep_1+1}}{f_{\vep_2-\vep_1}}
\\
&\qquad
- t_0 t^{-\frac12} \tau_1 W_1^{-2}W_2^{-2}
\frac{f_{-\vep_2-r_2}f_{-\vep_2-r_1} }{f_{2\vep_2}}
\frac{f_{-\vep_2-\vep_1+1}}{f_{-\vep_2-\vep_1}}
\frac{f_{-\vep_1-r_2}f_{-\vep_1-r_1} }{f_{2\vep_1}}
\\
&\qquad
- t_0^{\frac12} t^{-1} \tau_0 W_2^{-2}
\frac{f_{-\vep_2-\vep_1+1}}{f_{-\vep_2-\vep_1}}
\frac{f_{-\vep_2-r_2}f_{-\vep_2-r_1} }{f_{2\vep_2}}
\frac{f_{\vep_2-\vep_1+1}}{f_{\vep_2-\vep_1}}
\\
&\qquad
+ t_0 t^{-1} W_1^{-2}W_2^{-2}
\frac{f_{-\vep_1-r_2}f_{-\vep_1-r_1} }{f_{2\vep_1}}
\frac{f_{-\vep_2-\vep_1+1}}{f_{-\vep_2-\vep_1}}
 \frac{f_{-\vep_2-r_2}f_{-\vep_2-r_1} }{f_{2\vep_2}}
\frac{f_{\vep_2-\vep_1+1}}{f_{\vep_2-\vep_1}}.
\end{align*}
\end{prop}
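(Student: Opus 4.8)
The plan is to recognize all three idempotents as rank-one projections inside \emph{finite} Hecke subalgebras and to prove the three expansions in the order $T$, then $e$, then $\tau$, deriving each from its predecessor. The subalgebra $HS_3=\langle T_i,T_{i+1}\rangle$ is the finite Hecke algebra of type $A_2$ and $HB_2=\langle T_0,T_1\rangle$ is the finite Hecke algebra of type $B_2$; in each case the eigenvalue conditions \eqref{piconds} single out a one-dimensional representation $\chi$ (the sign representation $T_i,T_{i+1}\mapsto -t^{-\frac12}$ for $p_i^{(1^3)}$, and the two characters $T_0\mapsto -t_0^{-\frac12},\,T_1\mapsto -t^{-\frac12}$ and $T_0\mapsto t_0^{\frac12},\,T_1\mapsto -t^{-\frac12}$ for the two $B_2$ idempotents), onto which the idempotent projects. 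Uniqueness is already granted, so it suffices to exhibit, for each generating set, an element satisfying the defining equations.

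First I would prove the $T$-expansions. For a one-dimensional character $\chi$ of the relevant Weyl group $\cW$ (here $S_3$ or the order-$8$ dihedral group $\cW(B_2)$, with longest element $w_0$), set $x_\chi=\sum_{w\in\cW}\chi(T_w)\,T_w$. Using the Hecke multiplication rules $T_iT_w=T_{s_iw}$ when $\ell(s_iw)>\ell(w)$ and $T_iT_w=T_{s_iw}+(t_i^{\frac12}-t_i^{-\frac12})T_w$ otherwise (with $t_i$ the parameter attached to $T_i$), a short induction on $\ell(w)$ gives $T_ix_\chi=\chi(T_i)\,x_\chi$ for every generator, the case $\ell(s_iw)<\ell(w)$ reducing precisely to the quadratic identity $\chi(T_i)-\chi(T_i)^{-1}=t_i^{\frac12}-t_i^{-\frac12}$. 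Since $x_\chi$ has nonzero identity coefficient it is nonzero, and as the $\chi$-isotypic space is one-dimensional it is a scalar multiple of the idempotent. Left-multiplying by $x_\chi$ and using $T_wx_\chi=\chi(T_w)x_\chi$ yields $x_\chi^2=\bigl(\sum_w\chi(T_w)^2\bigr)x_\chi$, so the normalization is the parameter-weighted Poincar\'e polynomial $\sum_w\chi(T_w)^2$, which factors as $(1+t)(1+t+t^2)$ in the $A_2$ case and as a product of the shape $(1+t_0)(1+t)(1+t_0t)$ in the $B_2$ case; matching the coefficient of the top term $T_{w_0}$ then identifies the stated constants $N$, $N_0$, $N_0'$ and reproduces the displayed monomial expansions.

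Next the $e$-expansions follow from the linear substitutions $T_i=ae_i+t^{\frac12}$ and $T_0=a_0e_0+t_0^{\frac12}$ of \eqref{edefin} into the $T$-form: after expanding and collecting, the lower-degree monomials in the $e$'s cancel, leaving $a^3e_ie_{i+1}e_i-ae_i$ for $p_i^{(1^3)}$ and the two displayed degree-four expressions for the $B_2$ idempotents. The equality of the two alternative $e$-forms (that is, $e_ie_{i+1}e_i$ versus $e_{i+1}e_ie_{i+1}$, and the corresponding $B_2$ pair) is exactly the reformulation of the braid relations recorded in Remark~\ref{bdequivi}, hence automatic.

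Finally, the $\tau$-expansions are obtained by substituting the intertwiner formulas of \eqref{intertwinerdefs}, namely $T_i=\tau_i+(t^{\frac12}-t^{-\frac12})/f_{\vep_i-\vep_{i+1}}$ and the analogous expression for $T_0$, into the $T$-form, expanding, and then \emph{straightening} each resulting monomial in the $\tau$'s so that all rational functions of the $W_j$ are pushed to the right, using the commutations $W_i\tau_i=\tau_iW_{i+1}$, $W_{i+1}\tau_i=\tau_iW_i$, $W_j\tau_i=\tau_iW_j$ and their $\tau_0$ analogues recorded above, each of which shifts the $W$-indices appearing in the $f$-factors. I expect this straightening to be the main obstacle: the substitution produces many terms, and the delicate point is tracking how each denominator $f_\bullet(W)$ transforms as it commutes past a $\tau$ (so that, for instance, $f_{\vep_i-\vep_{i+1}}$ turns into $f_{\vep_{i+1}-\vep_i}$), then checking that all spurious contributions cancel and the surviving coefficients collapse into exactly the displayed products of $f$'s. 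This is a finite but intricate rational-function computation, and the $B_2$ idempotents $p_0^{(\emptyset,1^2)}$ and $p_0^{(1^2,\emptyset)}$, where four intertwiners and the boundary factors $f_{\pm\vep_j-r_\ell}$ enter, are the hardest cases.
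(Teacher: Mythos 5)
Your proposal is correct and would prove the proposition, but it runs in the opposite direction from the paper's own proof, which is pure verification: since the idempotents are uniquely determined by \eqref{idempconds} and \eqref{piconds}, the paper simply checks each displayed expansion against the defining eigenvalue equations --- the $T$-forms via the quadratic relations, the $e$-forms via \eqref{pieiconds}, and the $\tau$-forms by substituting \eqref{eiintaui} into those same conditions --- and extracts the constants by multiplying the $T$-expansion against the idempotent and summing eigenvalues. You instead \emph{derive} the formulas: the $T$-form as the classical weighted element $x_\chi=\sum_{w}\chi(T_w)T_w$ (your length induction is exactly the content of the paper's one-line appeal to the quadratic relations, and your Poincar\'e-polynomial normalization $\sum_w\chi(T_w)^2$ is the same sum the paper evaluates term by term); the $e$-forms by substituting $T_i=ae_i+t^{\frac12}$ and using \eqref{eisquared}, which essentially re-runs Remark \ref{bdequivi}; and the $\tau$-forms by substitution and straightening. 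The trade-off at the last step is real: checking \eqref{pieiconds} via \eqref{eiintaui}, as the paper does, is much lighter than the full straightening you correctly flag as the intricate part; in exchange, your route explains where the formulas and constants come from, and it is sensitive enough to detect what appear to be typos in the statement. Carrying out your normalization gives $N=-t^{-\frac32}(1+t)(1+t+t^2)$, which matches the intermediate sum $-t^{-\frac32}-2t^{-\frac12}-2t^{\frac12}-t^{\frac32}$ in the paper's proof but not the stated factorization; similarly, for $p_0^{(1^2,\emptyset)}$ your $x_{\chi'}$ computation (with $\chi'(T_0)=t_0^{\frac12}$, $\chi'(T_1)=-t^{-\frac12}$) gives $T_1$-coefficient $-t_0^{-1}t^{\frac12}$, constant term $t_0^{-1}t$, and $N_0'=t_0^{-1}t^{-1}(1+t_0)(1+t)(t_0+t)$, so that $N_0'\neq N_0$ generically. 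One point to make explicit: your one-dimensionality claim for the $\chi$-isotypic component requires $HS_3$ and $HB_2$ to be split semisimple, i.e.\ parameters generic enough that the constants are invertible, the same hypothesis implicit in the paper's uniqueness assertion.
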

\begin{proof}
The expressions in terms of $T_i$ are proved by using the relations
$T_i^2 = (t^{\frac12}-t^{-\frac12})T_i+1$ and $T_0^2 = (t_0^{\frac12}-t_0^{-\frac12})T_0+1$
to show that the equations in \eqref{piconds} are satisfied.
In view of the conditions \eqref{idempconds}, using the equations \eqref{piconds}
to compute the product of the expansion in terms of the $T_i$ with each element
$p_i^{(1^3)}$, $p_0^{(\emptyset, 1^2)}$ and $p_0^{(1^2,\emptyset)}$ respectively,
determines the normalizing constants
\begin{align*}
N &= -t^{-\frac32}-t^{-\frac12}-t^{-\frac12}-t^{\frac12}-t^{\frac12}-t^{\frac32} 
= t^{-\frac12}(1+t)(1+t+t^2), \quad\hbox{and} \\
N_0 =N_0'
&= t_0^{-1}t^{-1}+t^{-1}+t_0^{-1}+1+1+t_0+t+t_0t 
=t_0^{-1}t^{-1}(1+t_0)(1+t)(1+t_0t).
\end{align*}

Checking the conditions \eqref{pieiconds} verifies that the expressions in terms of the $e_i$ for
the elements  
$N p_i^{(1^3)}$, $N_0 p_0^{(\emptyset, 1^2)}$ and 
$N_0' p_0^{(1^2, \emptyset)}$ are correct.  
Similarly, using the expressions for $a_0e_0$ and $ae_i$ in terms of $\tau_i$
given in \eqref{eiintaui} to check these same conditions verifies that the
expressions for the elements $N p_i^{(1^3)}$, $N_0 p_0^{(\emptyset, 1^2)}$ and 
$N_0' p_0^{(1^2, \emptyset)}$ 
in terms of the $\tau_i$ are correct.  
\end{proof}

\subsection{Setting up the relation 
$a_k a^2e_{k-1}e_k e_{k-1} - a(t_k^{-\frac12}t^{\frac12}+t_k^{\frac12}t^{-\frac12} ) e_{k-1}=0$}

As in \cite[Remark 2.3]{DR}, let $w_A$ be the longest element of $WA_k = \langle s_1, \ldots, s_{k-1}\rangle$.
Let
$$T_{0^\vee} = T_{w_A}^{-1} T_k T_{w_A} = 
a_1^{-\frac12}(-a_2)^{-\frac12}
{\def\TOP{8}\def\K{5}\def\Left{-.85}\def\Right{.15}
\TikZ{[scale=.4]
		\Pole[\Left][0,.5*\TOP] 
		\Over[\K,2.5][-1.3,4]
		\Over[-1.3,4][\K,5.5]
		\Pole[\Left][.5*\TOP,\TOP] 
		\Pole[\Right][0,\TOP] 
		\draw[thin, rounded corners] (1,\TOP) to (1,\TOP-.25) to [bend left=10] (5,.5*\TOP+2) to  (5,.5*\TOP+1.5)
								(1,0) to (1,.25) to [bend right=10] (5,.5*\TOP-2) to  (5,.5*\TOP-1.5);
		\draw[over, rounded corners] (2,\TOP) to (1.75,\TOP-.75) to (4,.5*\TOP+1.5) to  (4,.5*\TOP-1.5) to (2,.75)to  (2,0);
		\draw[over, rounded corners] (3,\TOP) to (3,\TOP-.75) to (1.5,.5*\TOP+2.5) to (3,.5*\TOP+1) to  (3,.5*\TOP-1) 
									 to (1.5,.5*\TOP-2.5)to (3,.75) to (3,0);
		\draw[over, rounded corners] (4,\TOP) to (4,\TOP-1) to (1.25,.5*\TOP+1.75) to (2,.5*\TOP+1)  
										to (2,.5*\TOP-1) to (1.25,.5*\TOP-1.75) to (4,1) to (4,0);
		\draw[over, rounded corners] (5,\TOP) to (5,\TOP-1.25) to (1,.5*\TOP+1) to (1,.5*\TOP-1) to  (5,1.25) to (5,0);
		\Caps[.15,-.85][0,\TOP][\K]
}}  = a_1^{-\frac12}(-a_2)^{-\frac12} 
{\def\TOP{3} \def\K{5}
\TikZ{[scale=.4]
	\Pole[-.85][0,.5*\TOP]
	\Over[1,0][-1.3,.5*\TOP]
	\Under[-1.3,.5*\TOP][1,\TOP]
	\Pole[-.85][.5*\TOP,\TOP]
	\Pole[.15][0,\TOP]
	 \foreach \x in {2,...,\K}  {
		 \draw[thin] (\x,0) -- (\x,\TOP);
		 }
	\Caps[.15,-.85][0,\TOP][\K]
	}} = W_1T_0^{-1},
$$
and note that
$T_{w_A}^{-1} T_{k-1} T_{w_A} = T_1$.
Then
$$(T_{0^\vee}-t_k^{\frac12})(T_{0^\vee}+t_k^{-\frac12}) = 0
\qquad\hbox{and}\qquad
T_{0^\vee}T_1T_{0^\vee}T_1 = T_1T_{0^\vee}T_1T_{0^\vee}.$$
Let $HB^\vee_2$ be the subalgebra of $H^{\mathrm{ext}}_k$ 
generated by $T_{0^\vee}$ and $T_1$ and define
idempotents $p_{0^\vee}^{(\emptyset, 1^2)}$ and $p_{0^\vee}^{(1^2, \emptyset)}$
in $HB^\vee_2$ by the equations
\begin{equation}
(p_{0^\vee}^{(\emptyset, 1^2)})^2=p_{0^\vee}^{(\emptyset, 1^2)},
\qquad
(p_{0^\vee}^{(1^2, \emptyset)})^2=p_{0^\vee}^{(1^2, \emptyset)};
\label{idempveeconds}
\end{equation}
and
\begin{equation}
\begin{array}{lcl}
T_{0^\vee} p_{0^\vee}^{(\emptyset, 1^2)} = -t_k^{-\frac12} p_{0^\vee}^{(\emptyset, 1^2)},
&\quad &T_1 p_{0^\vee}^{(\emptyset, 1^2)} = -t^{-\frac12} p_{0^\vee}^{(\emptyset, 1^2)}, \\
T_{0^\vee} p_{0^\vee}^{(1^2, \emptyset)} = t_k^{\frac12} p_{0^\vee}^{(1^2, \emptyset)},  
&\quad \text{ and } \quad &T_1 p_{0^\vee}^{(1^2, \emptyset)} = -t^{-\frac12} p_{0^\vee}^{(1^2, \emptyset)}.
\end{array}
\label{p0veeconds}
\end{equation}
Let $a_k\in \CC^\times$ and define
\begin{equation}
a_ke_{0^\vee} = T_{0^\vee} - t_k^{\frac12},
\qquad\hbox{so that}\quad
e_{0^\vee} = T_{w_A} e_k T_{w_A}^{-1} 
\quad\hbox{and}\quad
e_1 = T_{w_A} e_{k-1} T_{w_A}^{-1}.
\label{conjugateek}
\end{equation}
The conditions in \eqref{p0veeconds} are equivalent to 
\begin{equation}
\begin{array}{lcl}
a_ke_{0^\vee} p_{0^\vee}^{(\emptyset, 1^2)} = -(t_k^{\frac12}+t_k^{-\frac12}) p_{0^\vee}^{(\emptyset, 1^2)}, 
&\quad &ae_1 p_{0^\vee}^{(\emptyset, 1^2)} = -(t^{\frac12}+t^{-\frac12}) p_{0^\vee}^{(\emptyset, 1^2)}, \\
a_ke_{0^\vee} p_{0^\vee}^{(1^2, \emptyset)} = 0,
&\quad\text{ and } \quad  &ae_1 p_{0^\vee}^{(1^2, \emptyset)} = -(t^{\frac12}+t^{-\frac12}) p_{0^\vee}^{(1^2, \emptyset)}.
\end{array}
\label{p0e0veeconds}
\end{equation}
Using $a_ke_{0^\vee} 
=W_1 T_0^{-1} - t_k^{\frac12}
= W_1 (T_0 - (t_0^{\frac12}-t_0^{-\frac12})) - t_k^{\frac12} 
= W_1(\tau_0+t_0^{\frac12}-c_{\alpha_0}- (t_0^{\frac12}-t_0^{-\frac12})) - t_k^{\frac12}$,
a short computation gives
$$a_ke_{0^\vee} 
= \tau_0W_1^{-1} - t_0^{-\frac12} W_1^{-1}
\frac{f_{\varepsilon_1-r_2}f_{-\varepsilon_1-r_1} } {f_{2\varepsilon_1}}.
 $$
And, with 
$N_k = t_k^{-1}t^{-1}(1+t_k)(1+t)(1+t_k t)$, we have
\begin{align*}
N_k p_{0^\vee}^{(\emptyset, 1^2)}
&= a_k^2a^2 e_{0^\vee}e_1e_{0^\vee}e_1 - a_ka(t_k^{-\frac12}t^{\frac12}+t_k^{\frac12}t^{-\frac12} ) e_{0^\vee}e_1 
= a_k^2a^2 e_1e_{0^\vee}e_1e_{0^\vee} - a_ka(t_k^{-\frac12}t^{\frac12}+t_k^{\frac12}t^{-\frac12} ) e_1e_{0^\vee} \\
&= \tau_0\tau_1\tau_0\tau_1 (W_1W_2)^{-1}
- t_0^{-\frac12}  \tau_1\tau_0  \tau_1 (W_1W_2)^{-1} 
\frac{f_{\vep_1-r_2}f_{-\vep_1-r_1} }{f_{2\vep_1}} 
+ t^{-\frac12} \tau_0 \tau_1\tau_0 (W_1W_2)^{-1}
 \frac{f_{\vep_2-\vep_1+1}}{f_{\vep_2-\vep_1}}
\\
&\qquad
- t_0^{-\frac12} t^{-\frac12}  \tau_0 \tau_1 (W_1W_2)^{-1}  
\frac{f_{-\vep_2-\vep_1+1}}{f_{-\vep_2-\vep_1}}
\frac{f_{\vep_1-r_2}f_{-\vep_1-r_1} }{f_{2\vep_1}} 
\\
&\qquad
- t_0^{-\frac12} t^{-\frac12} \tau_1\tau_0 (W_1W_2)^{-1}
 \frac{f_{\vep_2-r_2}f_{-\vep_2-r_1} }{f_{2\vep_2}}
 \frac{f_{\vep_2-\vep_1+1}}{f_{\vep_2-\vep_1}}
\\
&\qquad
+ t_0^{-1}t^{-\frac12} \tau_1 (W_1W_2)^{-1}
\frac{f_{\vep_2-r_2}f_{-\vep_2-r_1} }{f_{2\vep_2}} 
\frac{f_{-\vep_2-\vep_1+1}}{f_{-\vep_2-\vep_1}}
\frac{f_{\vep_1-r_2}f_{-\vep_1-r_1} }{f_{2\vep_1}} 
\\
&\qquad
- t_0^{-\frac12} t^{-1} \tau_0 (W_1W_2)^{-1}
 \frac{f_{-\vep_2-\vep_1+1}}{f_{-\vep_2-\vep_1}}
\frac{f_{\vep_2-r_2}f_{-\vep_2-r_1} }{f_{2\vep_2}} 
\frac{f_{\vep_2-\vep_1+1}}{f_{\vep_2-\vep_1}}
\\
&\qquad
+ t_0^{-1}t^{-1}  (W_1W_2)^{-1}
\frac{f_{\vep_1-r_2}f_{-\vep_1-r_1} }{f_{2\vep_1}} 
\frac{f_{-\vep_2-\vep_1+1}}{f_{-\vep_2-\vep_1}}
\frac{f_{\vep_2-r_2}f_{-\vep_2-r_1} }{f_{2\vep_2}} 
\frac{f_{\vep_2-\vep_1+1}}{f_{\vep_2-\vep_1}},
\end{align*}
and
\begin{align*}
&N_k p_{0^\vee}^{(1^2,\emptyset)}
= (a_k^2a^2 e_{0^\vee}e_1e_{0^\vee}e_1 
- a_ka(t_k^{-\frac12}t^{\frac12}+t_k^{\frac12}t^{-\frac12} ) e_{0^\vee}e_1)
- (a_ka^2e_1e_{0^\vee}e_1-a(t_k^{-\frac12}t^{\frac12}+t_k^{\frac12}t^{-\frac12} )e_1) \\
&= \tau_0  \tau_1\tau_0  \tau_1 (W_1W_2)^{-1}
- t_0^{-\frac12} \tau_1\tau_0 \tau_1  (W_1W_2)^{-1}
 \frac{f_{-\vep_1-r_2} f_{\vep_1-r_1} } {f_{2\vep_1}} 
+ t^{-\frac12} \tau_0 \tau_1\tau_0 (W_1W_2)^{-1} 
\frac{f_{\vep_2-\vep_1+1}}{f_{\vep_2-\vep_1}}
\\
&\qquad
- t_0^{-\frac12}t^{-\frac12} \tau_0  \tau_1 (W_1W_2)^{-1}
\frac{f_{-\vep_2-\vep_1+1}}{f_{-\vep_2-\vep_1}}
\frac{f_{-\vep_1-r_2} f_{\vep_1-r_1} } {f_{2\vep_1}} 
\\
&\qquad
- t_0^{-\frac12} t^{-\frac12}  \tau_1\tau_0 (W_1W_2)^{-1}
\frac{f_{-\vep_2-r_2} f_{\vep_2-r_1} } {f_{2\vep_2}} 
\frac{f_{\vep_2-\vep_1+1}}{f_{\vep_2-\vep_1}}
\\
&\qquad
+ t_0^{-1}t^{-\frac12} \tau_1 (W_1W_2)^{-1}
\frac{f_{-\vep_2-r_2} f_{\vep_2-r_1} } {f_{2\vep_2}} 
\frac{f_{-\vep_2-\vep_1+1}}{f_{-\vep_2-\vep_1}}
\frac{f_{-\vep_1-r_2} f_{\vep_1-r_1} } {f_{2\vep_1}} 
\\
&\qquad
- t_0^{-\frac12}t^{-1} \tau_0 (W_1^{-1}W_2)^{-1}
\frac{f_{-\vep_2-\vep_1+1}}{f_{-\vep_2-\vep_1}}
 \frac{f_{-\vep_2-r_2} f_{\vep_2-r_1} } {f_{2\vep_2}} 
\frac{f_{\vep_2-\vep_1+1}}{f_{\vep_2-\vep_1}}
\\
&\qquad 
+ t_0^{-1} t^{-1} (W_1W_2)^{-1}
 \frac{f_{-\vep_1-r_2} f_{\vep_1-r_1} } {f_{2\vep_1}} 
\frac{f_{-\vep_2-\vep_1+1}}{f_{-\vep_2-\vep_1}}
\frac{f_{-\vep_2-r_2} f_{\vep_2-r_1} } {f_{2\vep_2}} 
\frac{f_{\vep_2-\vep_1+1}}{f_{\vep_2-\vep_1}},
\end{align*}
in analogy with (and with the same proof as) Proposition \ref{idempexpansion}.

\section{The two boundary Temperley-Lieb algebra $TL_k$}

In this section we define the two boundary Temperley-Lieb algebra $TL_k$ (also called the symplectic
blob algebra, see \cite{GMP07, GMP08, GMP12, Re12, KMP16, GMP17}) 
and review its diagrammatic
calculus.  We extend the diagrammatic calculus to make clear the relationship to the two boundary
Hecke algebra and to set the stage for the proof of Theorem \ref{IIIexpansion}.  Although 
Theorem \ref{IIIexpansion} takes the form of a computation, it is a computation that has amazing
consequences as it determines the relationship between the center of $H_k^{\mathrm{ext}}$
and the center of $TL_k$.  The center of $H_k^{\mathrm{ext}}$ is a ring of symmetric functions
(see \cite[Theorem 2.3]{DR}) and the center of $TL_k$ turns out to be a polynomial ring $\CC[Z]$
in a single variable $Z$.  We shall see that, in the same way that  $H_k^{\mathrm{ext}}$ is finite rank over its center, the algebra $TL_k$ is finite rank over $\CC[Z]$. However, whereas the former has the easily classified rank of  $(2^k k!)^2$ over its center, the rank of  $TL_k$  is as yet unclassified combinatorially. For example, $\dim(TL_k(b)) = 5$, $19$, $84$, $335$, and $1428$, for $k=1$, $2$, $3$, $4$, and $5$, respectively. 

\subsection{The extended two boundary Temperley-Lieb algebra $TL^{\mathrm{ext}}_k$}
Let $H^{\mathrm{ext}}_k$ be the extended two boundary Hecke algebra as defined in \eqref{DRTBHeckerelations}.
The \emph{extended two boundary Temperley-Lieb algebra} $TL^{\mathrm{ext}}_k$ 
is the quotient of $H^{\mathrm{ext}}_k$ by the relations
$$p_{0^\vee}^{(\emptyset, 1^2)}=p_{0^\vee}^{(1^2, \emptyset)}, \qquad
p_0^{(\emptyset, 1^2)}=p_0^{(1^2, \emptyset)}
\quad\hbox{and}\qquad
p_i^{(1^3)}=0
\quad\hbox{for $i\in \{1, \ldots, k-2\}$.}
$$

\begin{thm} \label{idempquotient} 
The algebra $TL^{\mathrm{ext}}_k$ is the quotient of $H^{\mathrm{ext}}_k$ by the relations
$$a_k a^2e_{k-1}e_k e_{k-1} - a(t_k^{-\frac12}t^{\frac12}+t_k^{\frac12}t^{-\frac12} ) e_{k-1}=0,
\qquad
a_0a^2e_1e_0e_1-a(t_0^{-\frac12}t^{\frac12}+t_0^{\frac12}t^{-\frac12} )e_1=0,$$
and
$$a^3e_ie_{i+1}e_i - a e_i = a^3e_{i+1}e_ie_{i+1}-ae_{i+1}=0 \quad \text{
for $i\in \{1, \dots, k-2\}$.}$$
\end{thm}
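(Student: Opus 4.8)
The plan is to show that the three defining relations of $TL^{\mathrm{ext}}_k$ cut out exactly the same two-sided ideal of $H^{\mathrm{ext}}_k$ as the three relations in the statement, treating each family of idempotents separately. The whole argument rests on the $e$-expansions recorded in Proposition \ref{idempexpansion} (and the $0^\vee$-analogue set up in the preceding subsection), together with the fact that the normalizing scalars $N$, $N_0=N_0'$ and $N_k$ are nonzero, so that a relation $p=0$ is equivalent to $(\text{scalar})\cdot p=0$, hence to the vanishing of the corresponding $e$-polynomial.

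First I would treat the $p_i^{(1^3)}$. By Proposition \ref{idempexpansion}, $N p_i^{(1^3)}=a^3e_ie_{i+1}e_i-ae_i=a^3e_{i+1}e_ie_{i+1}-ae_{i+1}$ (the two right-hand sides already coincide in $H^{\mathrm{ext}}_k$ by Remark \ref{bdequivi}); since $N\ne 0$, the relation $p_i^{(1^3)}=0$ is equivalent to $a^3e_ie_{i+1}e_i-ae_i=a^3e_{i+1}e_ie_{i+1}-ae_{i+1}=0$, so the two generate the same ideal. Next, for the two-boundary idempotents I would subtract the $e$-expansions of Proposition \ref{idempexpansion}: since the expansion of $N_0'p_0^{(1^2,\emptyset)}$ is precisely that of $N_0p_0^{(\emptyset,1^2)}$ with $(a_0a^2e_1e_0e_1-a(t_0^{-\frac12}t^{\frac12}+t_0^{\frac12}t^{-\frac12})e_1)$ subtracted off, and $N_0=N_0'$, one obtains
\[
N_0\big(p_0^{(\emptyset,1^2)}-p_0^{(1^2,\emptyset)}\big)=a_0a^2e_1e_0e_1-a(t_0^{-\frac12}t^{\frac12}+t_0^{\frac12}t^{-\frac12})e_1.
\]
Dividing by $N_0\ne 0$ shows that $p_0^{(\emptyset,1^2)}=p_0^{(1^2,\emptyset)}$ is equivalent to $a_0a^2e_1e_0e_1-a(t_0^{-\frac12}t^{\frac12}+t_0^{\frac12}t^{-\frac12})e_1=0$.

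The $0^\vee$-family is where the only real subtlety lies. The same subtraction, applied now to the $N_k$-expansions, gives $N_k(p_{0^\vee}^{(\emptyset,1^2)}-p_{0^\vee}^{(1^2,\emptyset)})=a_k a^2 e_1 e_{0^\vee} e_1-a(t_k^{-\frac12}t^{\frac12}+t_k^{\frac12}t^{-\frac12})e_1$, so $p_{0^\vee}^{(\emptyset,1^2)}=p_{0^\vee}^{(1^2,\emptyset)}$ is equivalent to the vanishing of $a_k a^2 e_1 e_{0^\vee} e_1-a(t_k^{-\frac12}t^{\frac12}+t_k^{\frac12}t^{-\frac12})e_1$; but this is written in terms of $e_1,e_{0^\vee}$ rather than the $e_{k-1},e_k$ of the statement. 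To bridge this I would conjugate by the unit $T_{w_A}$: by \eqref{conjugateek}, $e_{0^\vee}=T_{w_A}e_kT_{w_A}^{-1}$ and $e_1=T_{w_A}e_{k-1}T_{w_A}^{-1}$, whence
\[
T_{w_A}^{-1}\big(a_k a^2 e_1 e_{0^\vee} e_1-a(t_k^{-\frac12}t^{\frac12}+t_k^{\frac12}t^{-\frac12})e_1\big)T_{w_A}=a_k a^2 e_{k-1}e_k e_{k-1}-a(t_k^{-\frac12}t^{\frac12}+t_k^{\frac12}t^{-\frac12})e_{k-1}.
\]
Since $T_{w_A}$ is invertible we have $H^{\mathrm{ext}}_k T_{w_A}=H^{\mathrm{ext}}_k=T_{w_A}H^{\mathrm{ext}}_k$, so an element and its $T_{w_A}$-conjugate generate the same two-sided ideal; thus the $0^\vee$-relation is equivalent to $a_k a^2 e_{k-1}e_k e_{k-1}-a(t_k^{-\frac12}t^{\frac12}+t_k^{\frac12}t^{-\frac12})e_{k-1}=0$. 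Assembling the three equivalences identifies the two presentations of $TL^{\mathrm{ext}}_k$. The genericity ensuring $N,N_0,N_k\ne 0$ is precisely the standing hypothesis under which the idempotents of Proposition \ref{idempexpansion} are defined, so it is already in force; I expect this conjugation step, rather than any computation, to be the one point requiring explicit justification.
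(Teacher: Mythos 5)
Your proposal is correct and takes essentially the same route as the paper's own proof: the paper likewise introduces $F_i$, $F_0$, $F_k$ for the three $e$-polynomials, reads off $Np_i^{(1^3)}=F_i$ and $F_0=N_0\bigl(p_0^{(1^2,\emptyset)}-p_0^{(\emptyset,1^2)}\bigr)$ from Proposition \ref{idempexpansion}, handles the $0^\vee$ family by the conjugation $T_{w_A}F_kT_{w_A}^{-1}=N_k\bigl(p_{0^\vee}^{(1^2,\emptyset)}-p_{0^\vee}^{(\emptyset,1^2)}\bigr)$ via \eqref{conjugateek}, and invokes invertibility of $N$, $N_0$, $N_k$ exactly as you do. The only cosmetic difference is that the paper also records the identities $N_0p_0^{(1^2,\emptyset)}=e_0F_0$ and $N_0p_0^{(\emptyset,1^2)}=(e_0-1)F_0$ (with $0^\vee$ analogues), showing each idempotent separately lies in the ideal generated by $F_0$ resp.\ $F_k$ --- a refinement your subtraction argument does not require for the equality of ideals.
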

\begin{proof}  
Let 
$F_i =a^3e_ie_{i+1}e_i - a e_i = a^3e_{i+1}e_ie_{i+1}-ae_{i+1}$ for $i\in \{1, \ldots, k-2\}$,
\begin{align*}
F_k = a_k a^2e_{k-1}e_k e_{k-1} - a(t_k^{-\frac12}t^{\frac12}+t_k^{\frac12}t^{-\frac12} ) e_{k-1}, 
\quad\hbox{and}\quad
F_0 = a_0a^2e_1e_0e_1-a(t_0^{-\frac12}t^{\frac12}+t_0^{\frac12}t^{-\frac12} )e_1.
\end{align*}
By Proposition \ref{idempexpansion},
$$N_0p_0^{(1^2,\emptyset)} = e_0F_0, 
\qquad
N_0p_0^{(\emptyset, 1^2)} = (e_0-1)F_0, \qquad
F_0 = N_0(p_0^{(1^2,\emptyset)}-p_0^{(\emptyset, 1^2)}),
\quad\text{and}\quad
Np_i^{(1^3)} = F_i;
$$
and, by \eqref{conjugateek},
$$T_{w_A}F_kT_{w_A}^{-1} = N_0^\vee (p_{0^\vee}^{(1^2,\emptyset)}-p_{0^\vee}^{(\emptyset, 1^2)}),
\qquad
T_{w_A}^{-1}p_{0^\vee}^{(1^2,\emptyset)}T_{w_A} = e_kF_k,
\quad\text{and}\quad
T_{w_A}^{-1}p_{0^\vee}^{(\emptyset,1^2)}T_{w_A} = (e_k-1)F_k.
$$
Thus, provided $N$, $N_0$ and $N_k$ are invertible, 
the ideal $H^{\mathrm{ext}}_kF_kH^{\mathrm{ext}}_k$ is the same as the ideal generated by 
$(p_{0^\vee}^{(1^2,\emptyset)}$ and $p_{0^\vee}^{(\emptyset,1^2)}$;
the ideal $H^{\mathrm{ext}}_k F_0 H^{\mathrm{ext}}_k$ is the same as the ideal generated by 
$p_0^{(1^2,\emptyset)}$ and $p_0^{(\emptyset,1^2)}$;
and $H^{\mathrm{ext}}_k p_i^{(1^3)} H^{\mathrm{ext}}_k 
= H^{\mathrm{ext}}_k F_i H^{\mathrm{ext}}_k$.
\end{proof}

\subsection{The two boundary Temperley-Lieb algebra $TL_k$}

The \emph{two boundary Temperley-Lieb algebra} $TL_k$ is the subalgebra of 
$TL_k^{\mathrm{ext}}$ generated by $a_0e_0, ae_1, \ldots, ae_{k-1}, a_ke_k$
(as defined in \eqref{edefin}).  
As in \eqref{bdgpproduct} and \eqref{Hksplitting}, where 
$\cB_k^{\mathrm{ext}} = \cB_k \times \cD$ and
$H_k^{\mathrm{ext}} = H_k\otimes \CC[W_0^{\pm 1}]$,
the extended two boundary Temperley-Lieb algebra is 
$$TL_k^{\mathrm{ext}} = TL_k \otimes \CC[W_0^{\pm1}],
\qquad\hbox{as algebras, where}\qquad
W_0 = PW_1\cdots W_k.
$$

\subsection{Diagrammatic calculus for $TL_k$}
Pictorially, identify 
$$
{\def\TOP{2}\def\K{6}
T_k=
\TikZ{[scale=.5]
\Pole[.15][0,2]
\Under[\K,0][\K+1.3,1]
\Pole[\K+.85][0,1][\K]
\Pole[\K+.85][1,2][\K]
\Over[\K+1.3,1][\K,2]
 \foreach \x in {1,...,5} {
	 \draw[thin] (\x,0) -- (\x,\TOP);
	 }
\Caps[.15,\K+.85][0,\TOP][\K]
},
\quad 
T_0=
\TikZ{[scale=.5]
	\Pole[\K+.85][0,2][\K]
	\Pole[.15][0,1]
	\Over[1,0][-.3,1]
	\Under[-.3,1][1,2]
	\Pole[.15][1,2]
	 \foreach \x in {2,...,\K} {
	 \draw[thin] (\x,0) -- (\x,\TOP);
	 }
\Caps[.15,\K+.85][0,\TOP][\K]
},
\quad
T_i=
\TikZ{[scale=.5]
	\Pole[\K+.85][0,2][\K]
	\Pole[.15][0,2]
	\Under[3,0][4,2]
	\Over[4,0][3,2]
	 \foreach \x in {1,2,5,\K} {
		 \draw[thin] (\x,0) -- (\x,\TOP);
		 }
	\Caps[.15,\K+.85][0,\TOP][\K]
	\Label[0,\TOP][3][{\tiny{$i$}}]
	\Label[0,\TOP][4][{\tiny{$i$+1}}]
},
}$$
$$
e_0 = \TikZ{
\draw[dotted] (.5*.5,0) rectangle (.5*5+.25,.75);
\foreach \x in {1,2,3,5}{\node[V] (b\x) at (.5*\x, 0) {}; \node[V] (t\x) at (.5*\x, .75){};}
\foreach \y in {1,2}{\node[V]  (l\y) at (.25, \y*.25){}; }
	\draw[bend right=30] (l2) to (t1) (b1) to (l1);
	\foreach \x in {2,3,5}{\draw (t\x)--(b\x);}
	\node at (.5*4,.5*.75) {$\dots$};	
}, \quad  
e_k=\TikZ{
\draw[dotted] (.5*.5,0) rectangle (.5*5+.25,.75);
\foreach \x in {1,3,4,5}{\node[V] (b\x) at (.5*\x, 0) {}; \node[V] (t\x) at (.5*\x, .75){};}
\foreach \y in {1,2}{\node[V]  (r\y) at (.5*5+.25, \y*.25){};}
	\draw[bend right=30] (t5) to (r2) (r1) to (b5);
	\foreach \x in {1,3,4}{\draw (t\x)--(b\x);}
	\node at (.5*2,.5*.75) {$\dots$};	
}, \quad \text{ and } \quad  
ae_i=\TikZ{
\draw[dotted] (.5*.5,0) rectangle (.5*8+.25,.75);
\foreach \x in {1,3,4,5,6,8}{\node[V] (b\x) at (.5*\x, 0) {}; \node[V] (t\x) at (.5*\x, .75){};}
	\draw[bend right=60] (t4) to (t5) (b5) to (b4);
	\foreach \x in {1,3,6,8}{\draw (t\x)--(b\x);}
	\node at (.5*2,.5*.75) {$\dots$};	\node at (.5*7,.5*.75) {$\dots$};
	\node[above] at (t4) {\tiny $i$};\node[below] at (b4) {\tiny $i$};
},
$$
for $i\in \{1, \ldots, k-1\}$.   Recall the notation
$$\(x\) = x^{\frac12}+x^{-\frac12}$$
from \eqref{notation}.
With $i\in \{1, \dots, k-1\}$, the relations \eqref{edefin}, \eqref{signrep} and \eqref{eisquared} are 
\begin{equation*}
\begin{array}{r@{\ }lr@{\ }lr@{\ }l}
T_0 &= a_0e_0+t_0^{\frac12}, 
	&T_i &= ae_i + t^{\frac12}, \qquad
	&T_k &= a_ke_k + t_k^{\frac12},
\\
\TikZ{[scale=.3]
		\Under[1,1.7][-.5,.5]
		\Pole[.15][-.7,1.7]
			\draw (.3,1.5+.2) arc (0:360:4pt and 3pt);
			\draw (.3,-.5-.2) arc (0:-180:4pt and 3pt);
		\Over[-.5,.5][1,-.7]
}
&= a_0\ 
	\TikZ{[scale=.5]\Ez[0]}
	+  t_0^{1/2}\ 
	\TikZ{[scale=.3]
		\Pole[.15][-.7,1.7]
			\draw (.3,1.5+.2) arc (0:360:4pt and 3pt);
			\draw (.3,-.5-.2) arc (0:-180:4pt and 3pt);
		\draw (1,1.7)--(1,-.7);
	}  
&
	\TikZ{[scale=.3]
		\Cross[1,1][2,0]
	}
&= 
\TikZ{[scale=.3]
		\draw[bend right=100] (1,1) to (2,1) (2,0) to (1,0);
	}
+ t^{1/2} \ 
	\TikZ{[scale=.3]
		\draw (1,0)--(1,1) (2,0)--(2,1);
	} 
&
\TikZ{[xscale=-.3, yscale=.3]
		\Under[-.5,.5][1,-.7]
		\Pole[.15][-.7,1.7]
			\draw (.3,1.5+.2) arc (0:360:4pt and 3pt);
			\draw (.3,-.5-.2) arc (0:-180:4pt and 3pt);
		\Over[1,1.7][-.5,.5]
	}
&= a_k\ \TikZ{[scale=.5]\Ek[0][0]}
+ t_k^{1/2}\ 
	\TikZ{[xscale=-.3, yscale=.3]
		\Pole[.15][-.7,1.7]
			\draw (.3,1.5+.2) arc (0:360:4pt and 3pt);
			\draw (.3,-.5-.2) arc (0:-180:4pt and 3pt);
		\draw (1,1.7)--(1,-.7);
	} \\~\\
T_0e_0 &= e_0T_0 = -t_0^{-\frac12}e_0,
&T_i (ae_i) &= (ae_i)T_i = -t^{-\frac12}(ae_i),
\qquad
&T_k e_k &= e_kT_k = -t_k^{-\frac12}e_k, 
\\
\TikZ{[scale=.3]
	\PoleCaps[.15][0,3]
	\Over[1,1][-.5,2]
	\Pole[.15][0,3]
	\Ez[0]
	\Over[1,3][-.5,2]
	}
	&=\TikZ{[scale=.3]
	\PoleCaps[.15][0,3]
	\Over[1,0][-.5,1]
	\Pole[.15][0,3]
	\Ez[2]
	\Over[1,2][-.5,1]
	}
	= - t_0^{-1/2} \TikZ{[scale=.5]\Ez[0]}
&
	\TikZ{[scale=.3]
		\draw[bend right=100] (1,2) to (2,2) (2,1) to (1,1);
		\Cross[1,1][2,-.5]
	}
	&=
	\TikZ{[scale=.3]
		\draw[bend right=100] (1,1) to (2,1) (2,0) to (1,0);
		\Cross[1,2.5][2,1] 
	}
	= - t^{-1/2} \ 
	\TikZ{[scale=.3]
		\draw[bend right=100] (1,1) to (2,1) (2,0) to (1,0);
	}
&\TikZ{[xscale=-.3, yscale=.3]
	\PoleCaps[.15][0,3]
	\Over[1,1][-.5,2]
	\Pole[.15][0,3]
	\Ez[0]
	\Over[1,3][-.5,2]
	}
	&=\TikZ{[xscale=-.3, yscale=.3]
	\PoleCaps[.15][0,3]
	\Over[1,0][-.5,1]
	\Pole[.15][0,3]
	\Ez[2]
	\Over[1,2][-.5,1]
	}
	= - t_k^{-1/2} \TikZ{[scale=.5]\Ek[0][0]}\\~\\
T_0^{-1}e_0 &= e_0T_0^{-1} = -t_0^{\frac12}e_0, 
\qquad
&T_i^{-1} (ae_i) &= (ae_i)T_i^{-1} = -t^{\frac12}(ae_i),
\qquad
&T_k^{-1} e_k &= e_kT_k^{-1} = -t_k^{\frac12}e_k,
\\
\TikZ{[scale=.3]
	\PoleCaps[.15][0,3]
	\Over[1,3][-.5,2]
	\Pole[.15][0,3]
	\Over[1,1][-.5,2]
	\Ez[0]
	}
	&=\TikZ{[scale=.3]
	\PoleCaps[.15][0,3]
	\Over[1,2][-.5,1]
	\Pole[.15][0,3]
	\Over[1,0][-.5,1]
	\Ez[2]
	}
	= - t_0^{1/2} \TikZ{[scale=.5]\Ez[0]}
&\TikZ{[scale=.3]
		\draw[bend right=100] (1,2) to (2,2) (2,1) to (1,1);
		 \Cross[1,-.5][2,1]
	}
	&=
	\TikZ{[scale=.3]
		\draw[bend right=100] (1,1) to (2,1) (2,0) to (1,0);
		\Cross[1,1][2,2.5] 
	}
	= - t^{1/2} \ 
	\TikZ{[scale=.3]
		\draw[bend right=100] (1,1) to (2,1) (2,0) to (1,0);
	}
&\TikZ{[xscale=-.3, yscale=.3]
	\PoleCaps[.15][0,3]
	\Over[1,3][-.5,2]
	\Pole[.15][0,3]
	\Over[1,1][-.5,2]
	\Ez[0]
	}
	&=\TikZ{[xscale=-.3, yscale=.3]
	\PoleCaps[.15][0,3]
	\Over[1,2][-.5,1]
	\Pole[.15][0,3]
	\Over[1,0][-.5,1]
	\Ez[2]
	}
	= - t_k^{1/2} \TikZ{[scale=.5]\Ek[0][0]}
\\~\\
e_0^2 &= \displaystyle{\frac{-\(t_0\)}{a_0} e_0,}
&(ae_i)^2 &= -\(t\) (ae_i), \qquad \text{and}
& e_k^2 &= \displaystyle\frac{-\(t_k\)}{a_k} e_k.\\
\TikZ{[scale=.5]\Ez[0]\Ez[1]} 
	&= \frac{-\(t_0\)}{a_0} \TikZ{[scale=.5]\Ez[0]} 
&\TikZ{[scale=.3]
	 \draw (0,0) circle (20pt);}  
	 &= - [\![t]\!] 
&\TikZ{[scale=.5]\Ek[0][0]\Ek[1][0]} 
	&= \frac{-\(t_k\)}{a_k} \TikZ{[scale=.5]\Ek[0][0]}
\end{array}
\end{equation*}
In the quotient by $(ae_i) (ae_{i+1}) (ae_i) = a e_i$, we have
\begin{equation}
\begin{array}{cc}
a e_i T_{i+1}T_i = aT_{i+1}T_i e_{i+1} = t^{\frac12} a^2 e_i e_{i+1}, \quad
&a e_i T_{i+1}^{-1}T_i^{-1} = aT_{i+1}^{-1}T_i^{-1} e_{i+1} = t^{-\frac12} a^2 e_i e_{i+1}, \\
a e_{i+1} T_i T_{i+1} = aT_i T_{i+1} e_i = t^{\frac12} a^2 e_{i+1} e_i, \quad
&a e_{i+1} T_i^{-1} T_{i+1}^{-1} = aT_i^{-1} T_{i+1}^{-1} e_i = t^{-\frac12} a^2 e_{i+1} e_i, 
\end{array}
\end{equation}
\begin{equation*}
	\TikZ{[scale=.3]
		\draw[bend right=100] (1,3) to (2,3) (2,2) to (1,2);
		\Cross[2,2][3,1] \Cross[1,1][2,0]  \draw(1,2)--(1,1) (3,1)--(3,0) (3,2)--(3,3);
	}
	=
	\TikZ{[scale=.3]
		\draw[bend right=100] (2,1) to (3,1) (3,0) to (2,0);
		\Cross[2,3][3,2] \Cross[1,2][2,1]  \draw(1,3)--(1,2) (3,1)--(3,2) (1,1)--(1,0);
	}
	=  t^{1/2}\ 
	\TikZ{[scale=.3]
		\draw[bend right=100] (1,2) to (2,2) (3,0) to (2,0);
		\Under[3,2][1,0]
	} 
	\qquad\qquad 
	\TikZ{[xscale=-.3, yscale=.3]
		\draw[bend right=100] (1,3) to (2,3) (2,2) to (1,2);
		\Cross[2,2][3,1] \Cross[1,1][2,0]  \draw(1,2)--(1,1) (3,1)--(3,0) (3,2)--(3,3);	
	}
	=
	\TikZ{[xscale=-.3, yscale=.3]
		\draw[bend right=100] (2,1) to (3,1) (3,0) to (2,0);
		\Cross[2,3][3,2] \Cross[1,2][2,1]  \draw(1,3)--(1,2) (3,1)--(3,2) (1,1)--(1,0);
	}
	=  t^{-1/2} \ 
	\TikZ{[xscale=-.3, yscale=.3]
		\draw[bend right=100] (1,2) to (2,2) (3,0) to (2,0);
		\Under[3,2][1,0]
	}
\end{equation*}
\begin{equation*}
	\TikZ{[xscale=-.3, yscale=.3]
		\draw[bend right=100] (1,3) to (2,3) (2,2) to (1,2);
		\Cross[3,2][2,1] \Cross[2,1][1,0]  \draw(1,2)--(1,1) (3,1)--(3,0) (3,2)--(3,3);
	}
	=
	\TikZ{[xscale=-.3, yscale=.3]
		\draw[bend right=100] (2,1) to (3,1) (3,0) to (2,0);
		\Cross[3,3][2,2] \Cross[2,2][1,1]  \draw(1,3)--(1,2) (3,1)--(3,2) (1,1)--(1,0);
	}
	= t^{1/2}\ 
	\TikZ{[xscale=-.3, yscale=.3]
		\draw[bend right=100] (1,2) to (2,2) (3,0) to (2,0);
		\Under[3,2][1,0]
	}
	\qquad\qquad 
	\TikZ{[scale=.3]
		\draw[bend right=100] (1,3) to (2,3) (2,2) to (1,2);
		\Cross[3,2][2,1] \Cross[2,1][1,0]  \draw(1,2)--(1,1) (3,1)--(3,0) (3,2)--(3,3);
	}
	=
	\TikZ{[scale=.3]
		\draw[bend right=100] (2,1) to (3,1) (3,0) to (2,0);
		\Cross[3,3][2,2] \Cross[2,2][1,1]  \draw(1,3)--(1,2) (3,1)--(3,2) (1,1)--(1,0);
	}
	= t^{-1/2} \ 
	\TikZ{[scale=.3]
		\draw[bend right=100] (1,2) to (2,2) (3,0) to (2,0);
		\Under[3,2][1,0]
	}
\end{equation*}
which are proved by using $T_i^{\pm1} = ae_i+t^{\pm\frac12}$ to expand both sides in
terms of $e_i$.  

When
$a_0 (ae_1) e_0 (ae_1) - \(t_0t^{-1}\) (a e_1) = 0$
and
$a_k (ae_{k-1}) e_k (ae_{k-1}) - \(t_kt^{-1}\) a e_{k-1} = 0$, then
\begin{equation}
\begin{array}{cc}
(ae_1) T_0 T_1 =  t^{\frac12} (ae_1) T_0^{-1},
&  T_1T_0 (ae_1) =  t^{\frac12} T_0^{-1}(ae_1), \\
(ae_{k-1}) T_k T_{k-1} =  t^{\frac12} (ae_{k-1}) T_k^{-1},
&  T_{k-1}T_k (ae_{k-1}) =  t^{\frac12} T_k^{-1}(ae_{k-1}),
\end{array}
\label{poleflips}
\end{equation}
\begin{equation*}
	\TikZ{[xscale=.3, yscale=.3]
		\draw[bend right=100] (1,3) to (2,3) (2,2) to (1,2);
		\Over[-.5,1][1,2]
		\Pole[.15][-.2, 3.2]
			\draw (.3,3+.2) arc (0:360:4pt and 3pt);
			\draw (.3,-.2) arc (0:-180:4pt and 3pt);
		\Over[1,0][2,1]
		\Over[-.5,1][2,0]
		\draw(2,1)--(2,2);
	}
	=t^{1/2} \ \TikZ{[xscale=.3, yscale=.3]
		\draw[bend right=100] (1,3) to (2,3) (2,2) to (1,2);
		\Over[-.5,1][1,0]
		\Pole[.15][-.2, 3.2]
			\draw (.3,3+.2) arc (0:360:4pt and 3pt);
			\draw (.3,-.2) arc (0:-180:4pt and 3pt);
		\Over[-.5,1][1,2]
		\draw(2,0)--(2,2);
	}
\qquad 
	\TikZ{[xscale=.3, yscale=-.3]
		\draw[bend right=100] (1,3) to (2,3) (2,2) to (1,2);
		\Under[-.5,1][2,0]
		\Pole[.15][-.2, 3.2]
			\draw (.3,-.2) arc (0:360:4pt and 3pt);
			\draw (.3,3+.2) arc (0:180:4pt and 3pt);
		\Over[1,0][2,1]
		\Over[-.5,1][1,2]
		\draw(2,1)--(2,2);
	}
	=t^{1/2} \ \TikZ{[xscale=.3, yscale=-.3]
		\draw[bend right=100] (1,3) to (2,3) (2,2) to (1,2);
		\Under[-.5,1][1,2]
		\Pole[.15][-.2, 3.2]
		\draw (.3,-.2) arc (0:360:4pt and 3pt);
		\draw (.3,3+.2) arc (0:180:4pt and 3pt);	
		\Over[-.5,1][1,0]
		\draw(2,0)--(2,2);
	}
\qquad
		\TikZ{[xscale=-.3, yscale=.3]
		\draw[bend right=100] (1,3) to (2,3) (2,2) to (1,2);
		\Under[-.5,1][2,0]
		\Pole[.15][-.2, 3.2]
			\draw (.3,3+.2) arc (0:360:4pt and 3pt);
			\draw (.3,-.2) arc (0:-180:4pt and 3pt);
		\Over[1,0][2,1]
		\Over[-.5,1][1,2]
		\draw(2,1)--(2,2);
	}
	=t^{1/2} \ \TikZ{[xscale=-.3, yscale=.3]
		\draw[bend right=100] (1,3) to (2,3) (2,2) to (1,2);
		\Under[-.5,1][1,2]
		\Pole[.15][-.2, 3.2]
			\draw (.3,3+.2) arc (0:360:4pt and 3pt);
			\draw (.3,-.2) arc (0:-180:4pt and 3pt);
		\Over[-.5,1][1,0]
		\draw(2,0)--(2,2);
	} 
\qquad 
	\TikZ{[xscale=-.3, yscale=-.3]
		\draw[bend right=100] (1,3) to (2,3) (2,2) to (1,2);
		\Over[-.5,1][1,2]
		\Over[1,0][2,1]
		\Pole[.15][-.2, 3.2]
			\draw (.3,-.2) arc (0:360:4pt and 3pt);
			\draw (.3,3+.2) arc (0:180:4pt and 3pt);
		\Over[-.5,1][2,0]
		\draw(2,1)--(2,2);
	}
	=t^{1/2} \ \TikZ{[xscale=-.3, yscale=-.3]
		\draw[bend right=100] (1,3) to (2,3) (2,2) to (1,2);
		\Over[-.5,1][1,0]
		\Pole[.15][-.2, 3.2]
			\draw (.3,-.2) arc (0:360:4pt and 3pt);
			\draw (.3,3+.2) arc (0:180:4pt and 3pt);
		\Over[-.5,1][1,2]
		\draw(2,0)--(2,2);
	} 
\end{equation*}
\begin{equation}
(ae_2)T_1T_0T_1T_2
= t^{\frac32} (ae_2)T_1^{-1} T_0^{-1}T_1^{-1},
\label{oddleftpoleflip}
\end{equation}
$$	\TikZ{[xscale=.3, yscale=.3]
		\draw[bend right=100] (2,3) to (3,3) (3,2) to (2,2);
		\Over[-.5,1.25][2,2]
		\Pole[.15][-.2, 3.2]
			\draw (.3,3+.2) arc (0:360:4pt and 3pt);
			\draw (.3,-.2) arc (0:-180:4pt and 3pt);
		\draw[over](1,0)--(1,3);
		\Over[2,0][3,1]
		\Over[-.5,1.25][3,0]
		\draw(3,1)--(3,2);
	}
	=t^{3/2} \ \TikZ{[xscale=.3, yscale=.3]
		\draw[bend right=100] (2,3) to (3,3) (3,2) to (2,2);
		\Over[-.5,1][2,0]
		\Pole[.15][-.2, 3.2]
			\draw (.3,3+.2) arc (0:360:4pt and 3pt);
			\draw (.3,-.2) arc (0:-180:4pt and 3pt);
		\draw[over](1,0)--(1,3);
		\Over[-.5,1][2,2]
		\draw(3,0)--(3,2);
	}
$$
\begin{eqnarray}
(ae_1)T_0(ae_1) = -t^{\frac12} (t_0^{\frac12}-t_0^{-\frac12}) (ae_1),
&\qquad 
(ae_{k-1})T_k (ae_{k-1}) = -t^{\frac12} (t_k^{\frac12}-t_k^{-\frac12}) (ae_{k-1}),
\label{polebubbles}\\
	\TikZ{[xscale=.3, yscale=.3]
		\draw[bend right=100] (1,3) to (2,3) (2,2) to (1,2);
		\draw[bend right=100] (1,1) to (2,1) (2,0) to (1,0);
		\Over[-.5,1.5][1,2]
		\Pole[.15][-.2, 3.2]
			\draw (.3,3+.2) arc (0:360:4pt and 3pt);
			\draw (.3,-.2) arc (0:-180:4pt and 3pt);
		\Over[-.5,1.5][1,1]
		\draw(2,1)--(2,2);
	}
	=-t^{\frac12} (t_0^{\frac12}-t_0^{-\frac12}) \ \TikZ{[xscale=.3, yscale=.3]
		\draw[bend right=100] (1,2) to (2,2) (2,0) to (1,0);
		\Pole[.15][-.2, 2.2]
			\draw (.3,2+.2) arc (0:360:4pt and 3pt);
			\draw (.3,-.2) arc (0:-180:4pt and 3pt);
	}
	&\qquad\qquad
	\TikZ{[xscale=-.3, yscale=-.3]
		\draw[bend right=100] (1,3) to (2,3) (2,2) to (1,2);
		\draw[bend right=100] (1,1) to (2,1) (2,0) to (1,0);
		\Over[-.5,1.5][1,2]
		\Pole[.15][-.2, 3.2]
			\draw (.3,3+.2) arc (0:360:4pt and 3pt);
			\draw (.3,-.2) arc (0:-180:4pt and 3pt);
		\Over[-.5,1.5][1,1]
		\draw(2,1)--(2,2);
	}
	=-t^{\frac12} (t_k^{\frac12}-t_k^{-\frac12}) \ 
	\TikZ{[xscale=-.3, yscale=.3]
		\draw[bend right=100] (1,2) to (2,2) (2,0) to (1,0);
		\Pole[.15][-.2, 2.2]
			\draw (.3,2+.2) arc (0:360:4pt and 3pt);
			\draw (.3,-.2) arc (0:-180:4pt and 3pt);
	}\nonumber
	\end{eqnarray}
and
\begin{align*}
e_0 T_1^{-1} T_0^{-1} T_1^{-1}  e_0 
= - t^{-\frac12}[\![t_0]\!] e_0 (ae_1) e_0  - t^{-1} t_0^{\frac12}e_0^2. 
\end{align*}
$$
\TikZ{[yscale=.3, xscale=.3]
	\PoleCaps[.15][0,3]
	\Over[2,.75][-.5,1.5]
	\Pole[.15][0,3]
	\draw[over] (1,1)--(1,2);
	\foreach \y in {3,1}{
		\draw [over, bend left=75] (1,\y) to (1-.4,\y-.3)  (1-.4,\y-.7)  to (1,\y-1) ;
		\draw[densely dotted]  (1-.4,\y)--(1-.4,\y-1) ; \filldraw [black] (1-.4,\y-.3) circle (2pt); \filldraw [black] (1-.4,\y-.7) circle (2pt);}
	\Over[2,2.25][-.5,1.5]
	\draw (2,2.25) -- (2,3) (2,0) -- (2,.75);
	\foreach \y in {0,3}{\foreach \x in {1,2} {\node[V] at (\x,\y){};}}
	}
	=- t^{-\frac12}[\![t_0]\!] \ 
	\TikZ{[yscale=.3, xscale=.3]
	\draw[densely dotted] (.5,0) to (.5,2.5);
	\foreach \x in {1,2}{\node[V] (b\x) at (\x, 0) {}; \node[V] (t\x) at (\x, 2.5){};}
	\foreach \y in {1,2, 3, 4}{\node[V]  (l\y) at (.5, \y*.5){}; }
	\draw[bend right=30] (l4) to (t1) (l3) to (t2) (b1) to (l1)  (b2) to (l2);
	}
- t^{-1} t_0^{\frac12}\TikZ{[yscale=.3, xscale=.3]
	\draw[densely dotted] (.25,0) to (.25,2.5);
	\foreach \x in {1,2}{\node[V] (b\x) at (\x, 0) {}; \node[V] (t\x) at (\x, 2.5){};}
	\foreach \y in {1,2, 3, 4}{\node[V]  (l\y) at (.25, \y*.5){}; }
		\draw[bend right=30] (l4) to (t1) (b1) to (l1);
		\draw  (l2) to ++(.5,0) to [bend right] ++(0,.5) to (l3);
		\draw (t2) to (b2);
	}
$$

\subsection{$TL_k$ as a diagram algebra}

 Using the pictorial notation, 
 the algebra $TL_k$ has a basis (see \cite[Theorem 3.4]{GMP12})
 of non-crossing diagrams with $k$ dots in the top row,
$k$ dots in the bottom row, edges connecting pairs of dots, an even number of left boundary to right boundary edges, and
$$
(-1)^{\#\{\mathrm{left\ boundary\ edges}\}}=1 
\qquad\hbox{and}\qquad
(-1)^{\#\{\mathrm{right\ boundary\ edges}\}}=1.
$$
For example, 
$$d_1 = 
\TikZ{[xscale=.5, yscale=.75]
\draw[densely dotted] (.5,0) rectangle (5.5,1.5);
\foreach \x in {1,...,5}{\node[V] (b\x) at (\x, 0) {}; \node[V] (t\x) at (\x, 1.5){};}
\foreach \y in {1,2}{\coordinate (l\y) at (.5, \y*.5); \coordinate (r\y) at (5.5, \y*.5); }
	\draw[bend right=40] 
		(t1) to  (t4) 
		(t2) to (t3)
		(b5) to (b4);
	\draw[bend left=15] 
		(b3) to (r1) node[V]{} 
		(b2) to (r2) node[V]{};
	\draw (b1)--(t5);
}
\qquad \text{ and } \qquad 
d_2= 
\TikZ{[xscale=.5, yscale=.75]
\draw[densely dotted] (.5,0) rectangle (5.5,1.5);
\foreach \x in {1,...,5}{\node[V] (b\x) at (\x, 0) {}; \node[V] (t\x) at (\x, 1.5){};}
\foreach \y in {1,2}{\coordinate (l\y) at (.5, \y*.5); \coordinate (r\y) at (5.5, \y*.5); }
	\draw[bend right=40] 
		(t4) to  (t5) 
		(b5) to (b4)
		(b3) to (b2);
	\draw[bend left=15] 
		(r1) node[V]{} to (t2)
		(r2) node[V]{} to (t3) 
		(l1) node[V]{} to (b1)
		 (t1) to (l2) node[V]{};
}
$$
are both basis elements of $TL_k$. Multiplication of basis elements can be computed pictorially by vertical concatenation, with self-connected loops and strands with both ends on the left or on the right replaced by constant coefficients according to the following local rules:
$$\TikZ{[scale=.3]
	 \draw (0,0) circle (20pt);
}  = -\(t\), \qquad 
\TikZ{[scale=.3]
	\draw[bend left=100] (0,5.5)node[V]{}  to (0,3.5) node[V]{};
	\draw[densely dotted] (1,0)--(0,0)--(0,1) (0,2)--(0,4) (0,5)--(0,6);
	\foreach\y in {1.5, 4.5}{\foreach \x in {-1,0,1}{
		\node[draw, fill =black, circle, inner sep=0pt, minimum size=.5pt] at (0, \y+.25*\x) {};}}
	\foreach \y in {.5, 2.5}{\node[V] at (0,\y){};}
	\draw[|-|] (-.5,2.9) to node[midway, left, align=center]{\tiny if even \#\\ \tiny connections} (-.5,.1);
} =   \frac{\(t_0 t^{-1}\)}{a_0}, \qquad 
\TikZ{[scale=.3]
	\draw[bend right=100] (0,5.5)node[V]{}  to (0,3.5) node[V]{};
	\draw[densely dotted] (-1,0)--(0,0)--(0,1) (0,2)--(0,4) (0,5)--(0,6);
	\foreach\y in {1.5, 4.5}{\foreach \x in {-1,0,1}{
		\node[draw, fill =black, circle, inner sep=0pt, minimum size=.5pt] at (0, \y+.25*\x) {};}}
	\foreach \y in {.5, 2.5}{\node[V] at (0,\y){};}
	\draw[|-|] (.5,2.9) to node[midway, right, align=center]{\tiny if even \#\\ \tiny connections} (.5,.1);
} =   \frac{\(t_kt^{-1}\)}{a_k}, 
 $$
 $$\TikZ{[scale=.3]
	\draw[bend left=100] (0,5.5)node[V]{}  to (0,3.5) node[V]{};
	\draw[densely dotted] (1,0)--(0,0)--(0,1) (0,2)--(0,4) (0,5)--(0,6);
	\foreach\y in {1.5, 4.5}{\foreach \x in {-1,0,1}{
		\node[draw, fill =black, circle, inner sep=0pt, minimum size=.5pt] at (0, \y+.25*\x) {};}}
	\foreach \y in {.5, 2.5}{\node[V] at (0,\y){};}
	\draw[|-|] (-.5,2.9) to node[midway, left, align=center]{\tiny if odd \#\\ \tiny connections} (-.5,.1);
} =   \frac{-\(t_0\)}{a_0}, 
	\qquad \text{ and } \qquad 
\TikZ{[scale=.3]
	\draw[bend right=100] (0,5.5)node[V]{}  to (0,3.5) node[V]{};
	\draw[densely dotted] (-1,0)--(0,0)--(0,1) (0,2)--(0,4) (0,5)--(0,6);
	\foreach\y in {1.5, 4.5}{\foreach \x in {-1,0,1}{
		\node[draw, fill =black, circle, inner sep=0pt, minimum size=.5pt] at (0, \y+.25*\x) {};}}
	\foreach \y in {.5, 2.5}{\node[V] at (0,\y){};}
	\draw[|-|] (.5,2.9) to node[midway, right, align=center]{\tiny if odd \#\\ \tiny connections} (.5,.1);
} =   \frac{-\(t_k\)}{a_k}.
$$
For example with $d_1$ and $d_2$ as above, 
$$d_1 d_2 = \TikZ{[xscale=.5, yscale=.75]
\draw[densely dotted] (.5,0) rectangle (5.5,1.5);
\foreach \x in {1,...,5}{\node[V] (b\x) at (\x, 0) {}; \node[V] (t\x) at (\x, 1.5){};}
\foreach \y in {1,2}{\coordinate (l\y) at (.5, \y*.5); \coordinate (r\y) at (5.5, \y*.5); }
	\draw[bend right=40] 
		(t1) to  (t4) 
		(t2) to (t3)
		(b5) to (b4);
	\draw[bend left=15] 
		(b3) to (r1) node[V]{} 
		(b2) to (r2) node[V]{};
	\draw (b1)--(t5);
\ShiftY{-1.5cm}
\draw[densely dotted] (.5,0) rectangle (5.5,1.5);
\foreach \x in {1,...,5}{\node[V] (b\x) at (\x, 0) {}; \node[V] (t\x) at (\x, 1.5){};}
\foreach \y in {1,2}{\coordinate (l\y) at (.5, \y*.5); \coordinate (r\y) at (5.5, \y*.5); }
	\draw[bend right=40] 
		(t4) to  (t5) 
		(b5) to (b4)
		(b3) to (b2);
	\draw[bend left=15] 
		(r1) node[V]{} to (t2)
		(r2) node[V]{} to (t3) 
		(l1) node[V]{} to (b1)
		 (t1) to (l2) node[V]{};
}=
\TikZ{[xscale=.5, yscale=.75]
	\draw[densely dotted] (.5,0) rectangle (5.5,3);
	\foreach \x in {1,...,5}{\node[V] (b\x) at (\x, 0) {}; \node[V] (t\x) at (\x, 3){};}
	\foreach \y in {1,2,3,4}{\coordinate (l\y) at (.5, \y*.6); \coordinate (r\y) at (5.5, \y*.6); }
	\draw[bend right=40] 
		(t1) to  (t4) 
		(t2) to (t3)
		(b5) to (b4)
		(b3) to (b2);
	\draw[bend left=15] 
		(l1) node[V]{} to (b1)
		 (t5) to (l3) node[V]{};
	\draw[thick, dashed]	(r4)node[V]{} .. controls (2,2) and (2,1).. (r1)node[V]{};
	\draw[very thick]	(r3)node[V]{}  .. controls (3,1.9) and (3,1.1)..  (r2)node[V]{};
	\draw (4.75,1.5) ellipse (.5cm and .15cm);
}
= (-\(t\) ) \Big(\frac{-\(t_k\)}{a_k}\Big) \Big(\frac{\(t_kt^{-1}\)}{a_k}\Big) 
\TikZ{[xscale=.5, yscale=.75]
\draw[densely dotted] (.5,0) rectangle (5.5,1.5);
\foreach \x in {1,...,5}{\node[V] (b\x) at (\x, 0) {}; \node[V] (t\x) at (\x, 1.5){};}
\foreach \y in {1,2}{\coordinate (l\y) at (.5, \y*.5); \coordinate (r\y) at (5.5, \y*.5); }
\draw[bend right=40] 
		(t1) to  (t4) 
		(t2) to (t3)
		(b5) to (b4)
		(b3) to (b2);
\draw[bend left=25] 
	(l1) node[V]{} to (b1)
	 (t5) to (l2) node[V]{};
}$$
(where the dashed strand is removed with a coefficient of $\frac{\(t_k t^{-1}\)}{a_k}$, and the thick strand is removed with a coefficient of $\frac{-\(t_k\)}{a_k}$).

\subsection{The through-strand filtration of $TL_k$}

A \emph{through-strand} is an edge that connects a top vertex to a bottom vertex. Define the ideals
$$TL_k^{(\le j)}=\hbox{$\CC$-span}\{ \hbox{diagrams with $\le j$ through-strands}\}.$$
Then the algebra $\zTL_k$ is filtered by ideals as 
\begin{equation}
TL_k=TL_k^{(\le k)}\supseteq TL_k^{(\le k-1)}\supseteq \cdots \supseteq TL_k^{(\le 1)}\supseteq TL_k^{(\le 0)} \supseteq 0.
\label{TLfilt}
\end{equation}
If
$$TL_k^{(j)} = \frac{TL_k^{(\le j)}}{TL_k^{(\le j-1)}},
\qquad\hbox{then}\quad
\dim(TL_k^{(j)})<\infty,\ \ \hbox{for $j\ge 1$,}
\qquad\hbox{and}\quad \dim(TL_k^{(\le 0)})=\infty,$$
as there can be an arbitrarily large number of edges which connect the left and right sides in diagrams with no through strands:
$$
\TikZ{[xscale=.75]
\draw[dotted] (.5,0) rectangle (4.5,1.35);
\foreach \x in {1,...,4}{\node[V] (b\x) at (\x, 0) {}; \node[V] (t\x) at (\x, 1.35){};}
\foreach \x/\y in {1/.25,2/.4,4/.9,5/1.05}{\node[V] (l\x) at (.5, \y){}; \node[V] (r\x) at (4.5, \y){}; \draw (l\x)--(r\x);}
\draw[bend right=20] 
		(t1) to  (t2)	(t3) to  (t4)	(b4) to  (b3)	(b2) to  (b1) ;
\node at (2.5,.75){\tiny $\vdots$};
}.$$

\subsection{The elements $I_1$ and $I_2$}

As in \cite[\S 3.2]{GN}, define 
\begin{equation}\label{eq:defI1}
   I_1 = \begin{cases}
		(ae_1) (ae_3)\cdots (ae_{k-1}), &\text{if $k$ is even,} \\
		(ae_1) (ae_3)\cdots (ae_{k-2}) e_k, &\text{if $k$ is odd,}
	\end{cases}
   =
	\begin{cases}
	\TikZ{[scale=.5, yscale=.9]
		\draw[dotted] (.5,0) rectangle (7.5,1.2);
		\foreach \x in {1,2,3,4,6,7}{\node[V] (b\x) at (\x, 0) {}; \node[V] (t\x) at (\x, 1.2){};}
		\foreach \x/\y in {1/2, 3/4, 6/7}{\draw[bend right=60] (t\x) to (t\y);\draw[bend right=60] (b\y) to (b\x);}
		\node at (5,.6){$\cdots$};
	}
	&\text{if $k$ is even,}\\
	\TikZ{[scale=.5, yscale=.9]
		\draw[dotted] (.5,0) rectangle (8.5,1.2);
		\foreach \x in {1,2,3,4,6,7,8}{\node[V] (b\x) at (\x, 0) {}; \node[V] (t\x) at (\x, 1.2){};}
		\foreach \y in {1,2}{\coordinate (l\y) at (.5, \y*.5-.15); \coordinate (r\y) at (8.5, \y*.5-.15); }
		\foreach \x/\y in {1/2, 3/4, 6/7}{\draw[bend right=60] (t\x) to (t\y);\draw[bend right=60] (b\y) to (b\x);}
		\draw[bend right=30] (t8) to (r2) node[V]{}	(r1) node[V]{} to (b8);
		\node at (5,.6){$\cdots$};
	}
	&\text{if $k$ is odd,}
	\end{cases}
\end{equation}
and
\begin{equation}\label{eq:defI2}
    I_2 = \begin{cases}
		e_0 (ae_2) \cdots (ae_{k-2}) e_k, &\hbox{if $k$ is even,} \\
		e_0 (ae_2) \cdots (ae_{k-1}), &\hbox{if $k$ is odd.}
	\end{cases}
    =
	\begin{cases}\TikZ{[scale=.5, yscale=.9]
		\draw[dotted] (.5,0) rectangle (7.5,1.2);
		\foreach \x in {1,2,3,5,6,7}{\node[V] (b\x) at (\x, 0) {}; \node[V] (t\x) at (\x, 1.2){};}
		\foreach \x/\y in {2/3, 5/6}{\draw[bend right=60] (t\x) to (t\y);\draw[bend right=60] (b\y) to (b\x);}
		\foreach \y in {1,2}{\coordinate (l\y) at (.5, \y*.5-.15); \coordinate (r\y) at (7.5, \y*.5-.15); }
		\draw[bend right=30] (l2)node[V]{} to (t1) 	(b1)  to (l1) node[V]{} 	(t7) to (r2) node[V]{}	(r1) node[V]{} to (b7);
		\node at (4,.6){$\cdots$};
	}
	&\text{if $k$ is even,}\\
	\TikZ{[scale=.5, yscale=.9]
		\draw[dotted] (.5,0) rectangle (8.5,1.2);
		\foreach \x in {1,2,3,5,6,7,8}{\node[V] (b\x) at (\x, 0) {}; \node[V] (t\x) at (\x, 1.2){};}
		\foreach \y in {1,2}{\coordinate (l\y) at (.5, \y*.5-.15); \coordinate (r\y) at (8.5, \y*.5-.15); }
		\foreach \x/\y in {2/3, 5/6,7/8}{\draw[bend right=60] (t\x) to (t\y);\draw[bend right=60] (b\y) to (b\x);}
		\draw[bend right=30] (l2)node[V]{} to (t1) 	(b1)  to (l1) node[V]{} ;
		\node at (4,.6){$\cdots$};
	}
	&\text{if $k$ is odd.}
\end{cases}
\end{equation}
Up to a constant multiple the elements $I_1$ and $I_2$ are idempotents and
\begin{equation*}\label{eq:I121}
    I_1 I_2 I_1 = \begin{cases}
	\TikZ{[scale=.4, yscale=.9]
		\draw[dotted] (.5,0) rectangle (7.5,2);
		\foreach \x in {1,2,3,4,6,7}{\node[V] (b\x) at (\x, 0) {}; \node[V] (t\x) at (\x, 2){};}
		\foreach \y in {1,2,3,4}{\coordinate (l\y) at (.5, \y*.4){}; \coordinate(r\y) at (7.5, \y*.4){}; }
		\foreach \x/\y in {1/2, 3/4, 6/7}{\draw[bend right=60] (t\x) to (t\y);\draw[bend right=60] (b\y) to (b\x);}
		\draw (r3) node[V] {}--(l3) node[V] {} (r2) node[V] {} --(l2)node[V] {};
		\node at (5,.2){$\cdots$};	\node at (5,1.8){$\cdots$};
	}
	&\text{if $k$ is even,}\\
	\TikZ{[scale=.4, yscale=.9]
		\draw[dotted] (.5,0) rectangle (8.5,2);
		\foreach \x in {1,2,3,4,6,7,8}{\node[V] (b\x) at (\x, 0) {}; \node[V] (t\x) at (\x, 2){};}
		\foreach \y in {1,2,3,4}{\coordinate (l\y) at (.5, \y*.4){}; \coordinate(r\y) at (8.5, \y*.4){}; }
		\foreach \x/\y in {1/2, 3/4, 6/7}{\draw[bend right=60] (t\x) to (t\y);\draw[bend right=60] (b\y) to (b\x);}
		\draw[bend right=30] (t8) to (r4) node[V]{}	(r1) node[V]{} to (b8);
		\draw (r3) node[V] {}--(l3) node[V] {} (r2) node[V] {} --(l2)node[V] {};
		\node at (5,.2){$\cdots$};	\node at (5,1.8){$\cdots$};
	}&\text{if $k$ is odd,}
	\end{cases}
\qquad
    I_2 I_1 I_2 = \begin{cases}
	\TikZ{[scale=.4, yscale=.9]
		\draw[dotted] (.5,0) rectangle (7.5,2);
		\foreach \x in {1,2,3,5,6,7}{\node[V] (b\x) at (\x, 0) {}; \node[V] (t\x) at (\x, 2){};}
		\foreach \x/\y in {2/3, 5/6}{\draw[bend right=60] (t\x) to (t\y);\draw[bend right=60] (b\y) to (b\x);}
		\foreach \y in {1,2,3,4}{\coordinate (l\y) at (.5, \y*.4); \coordinate (r\y) at (7.5, \y*.4); }
		\draw[bend right=30] (l4)node[V]{} to (t1) 	(b1)  to (l1) node[V]{} 	(t7) to (r4) node[V]{}	(r1) node[V]{} to (b7);
		\draw (r3) node[V] {}--(l3) node[V] {} (r2) node[V] {} --(l2)node[V] {};
		\node at (4,.2){$\cdots$};	\node at (4,1.8){$\cdots$};
	}
	&\text{if $k$ is even,}\\
	\TikZ{[scale=.4, yscale=.9]
		\draw[dotted] (.5,0) rectangle (8.5,2);
		\foreach \x in {1,2,3,5,6,7,8}{\node[V] (b\x) at (\x, 0) {}; \node[V] (t\x) at (\x, 2){};}
		\foreach \y in {1,2,3,4}{\coordinate (l\y) at (.5, \y*.4); \coordinate (r\y) at (8.5, \y*.4); }
		\foreach \x/\y in {2/3, 5/6,7/8}{\draw[bend right=60] (t\x) to (t\y);\draw[bend right=60] (b\y) to (b\x);}
		\draw[bend right=30] (l4)node[V]{} to (t1) 	(b1)  to (l1) node[V]{} ;
		\draw (r3) node[V] {}--(l3) node[V] {} (r2) node[V] {} --(l2)node[V] {};
		\node at (4,.2){$\cdots$};	\node at (4,1.8){$\cdots$};
	}
	&\text{if $k$ is odd.}
	\end{cases}
\end{equation*}
Proposition \ref{IIIexpansion} gives another striking formula for the elements $I_1I_2I_1$ and $I_2I_1I_2$.

\subsection{The element $Z I_1$ in $TL_k$}

Conceptually, the diagram
$${\def\K{5}
F =
\TikZ{[scale=.3, yscale=.9]
	\node at (3.5,1.5) {\tiny$\cdots$};
	\PoleCaps[.15,\K+.85][0,3]
	\Pole[.15][1.5,3]\Pole[\K+.85][1.5,3][\K]
	\foreach \x in {1,2,5}{
			\draw[over] (\x,3) to (\x,0);}
	\draw[over, rounded corners] 
		(2.75,1) to (1,1)  to [bend left=10]  (-.7,1.5) to [bend left=10] (1,2) to (2.75,2) 
		(\K-.75,2) to (\K,2) to [bend left=10] (\K+1.7,1.5) to [bend left=10] (\K,1) to (\K-.75,1);
	\Pole[.15][0,1.5]\Pole[\K+.85][0,1.5][\K]
	\foreach \x in {1,2,5}{
			\draw[over] (\x,1.5) to (\x,0);}
	\foreach \y in {0,3}{\foreach \x in {1, 2, 5} {\node[V] at  (\x,\y){};}}
	}
}
\quad\hbox{would be a central element of $H_k$}
$$
if it represented a true element of the algebra $H_k$.
Though the diagram $F$ does not naturally represent an element of $H_k$, the diagrams
$$
{\def\K{8}
D^{\mathrm{even}} =
I_1 \big(T_0^{-1}(ae_2)(ae_4)\cdots (ae_{k-2})T_k\big)I_1 =
 \TikZ{[scale=.3]
	\node at (5.5,1.5) {\tiny$\cdots$};
	\PoleCaps[.15,\K+.85][0,3]
	\Over[1,1][-.5,1.5]\Over[\K,1][\K+1.5,1.5]
	\Pole[.15][0,3]\Pole[\K+.85][0,3][\K]
	\Over[1,2][-.5,1.5]\Over[\K,2][\K+1.5,1.5]
	\foreach \x in {1,3,7}{
			\draw[bend right=100] (\x,3) to (\x+1,3) (\x+1,0) to (\x,0);}
	\draw (1,2) to[bend left] (1.5,2.2)--(5,2.2) (6,2.2)--(7.5,2.2) to [bend left] (8,2);
	\draw (1,1) to[bend right] (1.5,.8)--(5,.8) (6,.8)--(7.5,.8) to [bend right] (8,1);
	\foreach \y in {0,3}{\foreach \x in {1,...,4,7,\K} {\node[V] at  (\x,\y){};}}
	}\ , \quad \text{and}
}
$$
$$
{\def\K{9}
D^{\mathrm{odd}} = I_2\big(T_1^{-1}T_0^{-1}T_1^{-1}(ae_3)(ae_5)\cdots (ae_{k-2})T_k\big)I_2
= 
	\TikZ{[yscale=.3, xscale=-.3]
	\node at (5.5,1.5) {\tiny$\cdots$};
	\PoleCaps[.15,\K+.85][0,3]
	\Over[1,1][-.5,1.5]\Over[\K-1,1][\K+1.5,1.5]
	\Pole[.15][0,3]\Pole[\K+.85][0,3][\K]
	\Over[1,2][-.5,1.5]
	\draw[over] (9,1)--(9,2);
	\Over[\K-1,2][\K+1.5,1.5]
	\foreach \x in {1,3,7}{
			\draw[bend right=100] (\x,3) to (\x+1,3) (\x+1,0) to (\x,0);}
	\foreach \y in {2,0}{\Ek[\y][9]}
	\draw (1,2) to[bend left] (1.5,2.2)--(5,2.2) (6,2.2)--(7.5,2.2) to [bend left] (8,2);
	\draw (1,1) to[bend right] (1.5,.8)--(5,.8) (6,.8)--(7.5,.8) to [bend right] (8,1);
	\foreach \y in {0,3}{\foreach \x in {1,...,4,7,8,9} {\node[V] at  (\x,\y){};}}
	}
}
$$
do appear in the algebra $TL_k$ and play an important role in the proof of the following theorem. See also \cite[Thm.\ 4.1]{GN}, using Remark \ref{rk:conversion-to-GN} below as a guide.

\begin{thm}  \label{IIIexpansion}
Let
$Z = W_1+W_1^{-1}+\cdots + W_k+W_k^{-1}$ which, as noted in \eqref{Zdefn}
is a central element of $H_k$.  As elements of $TL_k$,
\begin{align*}
&\hbox{if $k$ is even, then}\quad
D^{\mathrm{even}} 
= a_0a_k I_1 I_2 I_1 + [\![ t_0t_kt^{-1}]\!] I_1
\quad\hbox{and}\quad
ZI_1 = [k]D^{\mathrm{even}},\quad \hbox{and} \\
&\hbox{if $k$ is odd, then}\quad
D^{\mathrm{odd}} = t^{-\frac12} \Big( \frac{-\(t_0\)}{a_0} \Big) \left(a_0a_k I_2I_1I_2 -\(t_0 t_k^{-1}\) I_2\right)
\quad\hbox{and}\quad
t^{-\frac12}\Big(\frac{-\(t_0\)}{a_0}\Big) ZI_2 = [k]D^{\mathrm{odd}}.
\end{align*}
\end{thm}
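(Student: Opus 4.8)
The plan is to prove the two displayed identities in each parity separately, first the ``diagram $=$ idempotent combination'' identity and then the ``$Z$ times idempotent'' identity; I describe the even case, the odd case being entirely parallel. For the first identity, I would expand the middle factor of $D^{\mathrm{even}} = I_1\big(T_0^{-1}(ae_2)\cdots(ae_{k-2})T_k\big)I_1$ using \eqref{edefin} and the quadratic relations \eqref{DRTBHeckerelations}: from $T_0 = a_0e_0+t_0^{\frac12}$ and $T_0-T_0^{-1}=t_0^{\frac12}-t_0^{-\frac12}$ one gets $T_0^{-1}=a_0e_0+t_0^{-\frac12}$, and likewise $T_k=a_ke_k+t_k^{\frac12}$, producing four terms. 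Writing $M=(ae_2)(ae_4)\cdots(ae_{k-2})$, the $a_0a_k$-term is $a_0a_k\,I_1e_0Me_kI_1=a_0a_kI_1I_2I_1$ directly from the definition \eqref{eq:defI2} of $I_2$. The remaining three terms $a_0t_k^{\frac12}I_1e_0MI_1$, $t_0^{-\frac12}a_kI_1Me_kI_1$ and $t_0^{-\frac12}t_k^{\frac12}I_1MI_1$ are each evaluated diagrammatically: since $I_1$ caps $(2j{-}1,2j)$ while $M$ caps $(2j,2j{+}1)$, the composites close internal loops (each worth $-[\![t]\!]$) and, where an $e_0$ or $e_k$ occurs, produce a single left/right pole bubble evaluated by \eqref{polebubbles} and the pole-loop rules. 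Each reduces to a scalar multiple of $I_1$, and summing the three scalars yields $[\![t_0t_kt^{-1}]\!]$; this last is the only bookkeeping step and is routine.

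For the second identity I would first collapse everything into the corner algebra $I_1 TL_k I_1$. Since $I_1^2=\lambda_1I_1$ for a nonzero scalar $\lambda_1$ (the $I_i$ are idempotent up to a constant) and $Z$ is central by \eqref{Zdefn}, one has $ZI_1=\lambda_1^{-1}(I_1Z)I_1=\lambda_1^{-1}I_1ZI_1$, so it suffices to prove $I_1ZI_1=\lambda_1[k]D^{\mathrm{even}}$. One checks that in the single-winding sector the corner $I_1 TL_k I_1$ is spanned by $I_1$ (no net winding about the poles) and $I_1I_2I_1$ (one strand joining the two poles), and by the first identity $D^{\mathrm{even}}$ is the combination $a_0a_kI_1I_2I_1+[\![t_0t_kt^{-1}]\!]I_1$ of these. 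I then expand $I_1ZI_1=\sum_{i=1}^k I_1(W_i+W_i^{-1})I_1$ diagrammatically: each $W_i$ is the braid in \eqref{BraidMurphy} winding strand $i$ once around both poles, and sandwiched between the caps of $I_1$ it reduces, via the pole-flip relations \eqref{poleflips}, \eqref{oddleftpoleflip}, to a fixed power of $t$ times a single both-pole winding, i.e. a power of $t$ times an element of $\mathrm{span}\{I_1,I_1I_2I_1\}$.

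The crux is the $t$-power bookkeeping. Because $I_1$ caps $2j{-}1$ to $2j$, pushing a winding across a cap reverses its orientation, so on $I_1$ the element $W_{2j-1}$ is identified (up to a tracked power of $t$ from \eqref{poleflips}) with $W_{2j}^{-1}$; iterating the flip relations moves the winding through successive caps and accumulates consecutive half-integer powers of $t$. Carrying this out, the $2k$ contributions $I_1(W_i+W_i^{-1})I_1$ collapse to the $k$ monomials $t^{-(k-1)/2},\,t^{-(k-3)/2},\,\dots,\,t^{(k-1)/2}$, whose sum is exactly $[k]$, each multiplying $\lambda_1D^{\mathrm{even}}$; matching the coefficients of $I_1$ and of $I_1I_2I_1$ gives $I_1ZI_1=\lambda_1[k]D^{\mathrm{even}}$ and hence $ZI_1=[k]D^{\mathrm{even}}$. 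The main obstacle throughout is precisely this accounting: verifying that every application of \eqref{poleflips}, \eqref{oddleftpoleflip} and \eqref{polebubbles} contributes the correct power of $t$ so that the windings assemble into $[k]$ with no stray factor. For $k$ odd one runs the identical argument with $I_2$ in place of $I_1$ and $D^{\mathrm{odd}}$ in place of $D^{\mathrm{even}}$; the difference is that $I_2$ terminates in $e_0$ and $e_k$, so closing the winding produces an extra left-pole bubble, which by \eqref{polebubbles} and the pole-loop rules contributes the scalar $t^{-\frac12}\big(\tfrac{-[\![t_0]\!]}{a_0}\big)$ recorded in the statement.
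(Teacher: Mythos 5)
Your even-$k$ argument is, in both halves, essentially the paper's own proof. For the first identity you make the same expansion $T_0^{-1}=a_0e_0+t_0^{-\frac12}$, $T_k=a_ke_k+t_k^{\frac12}$, producing $a_0a_kI_1I_2I_1$ plus the three boundary/loop terms (the paper's $M^{\mathrm{even}}$, $L^{\mathrm{even}}$, $P^{\mathrm{even}}$, which evaluate to $\frac{[\![t_0t^{-1}]\!]}{a_0}I_1$, $\frac{[\![t_kt^{-1}]\!]}{a_k}I_1$ and $-[\![t]\!]\,I_1$, so that the scalars sum to $t_k^{\frac12}[\![t_0t^{-1}]\!]+t_0^{-\frac12}[\![t_kt^{-1}]\!]-t_0^{-\frac12}t_k^{\frac12}[\![t]\!]=[\![t_0t_kt^{-1}]\!]$, exactly as in the paper). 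For the second identity your reduction $ZI_1=\lambda_1^{-1}I_1ZI_1$ with $\lambda_1=(-[\![t]\!])^{k/2}$ is the paper's $(-[\![t]\!])^{k/2}ZI_1=ZI_1^2=I_1ZI_1$, and your cap heuristic identifying $W_{2j-1}$ with $W_{2j}^{-1}$ up to a tracked power of $t$ is precisely how the paper passes between $I_1W_{1+2i}^{\pm1}I_1$ and $I_1W_{2+2i}^{\pm1}I_1$ via $T_i^{\pm1}(ae_i)=(-t^{\mp\frac12})(ae_i)$; the monomials then assemble into $[k]$ as you say. (Your appeal to a spanning claim for the corner $I_1TL_kI_1$ is neither proved nor needed: the direct sandwich computations, which you also invoke, are what carry the argument.)

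The odd case, however, is \emph{not} the ``identical argument,'' and your description contains an error that hides the real work. For $k$ odd, $I_2=e_0(ae_2)\cdots(ae_{k-1})$ contains $e_0$ but no $e_k$ (it is $I_1$ that ends in $e_k$), so $I_2$ does not ``terminate in $e_0$ and $e_k$.'' More substantively, because strand $1$ of $I_2$ is tied to the left wall by $e_0$, the both-pole winding sandwiched by $I_2$ is $D^{\mathrm{odd}}=I_2\big(T_1^{-1}T_0^{-1}T_1^{-1}(ae_3)\cdots(ae_{k-2})T_k\big)I_2$: the left factor is $T_1^{-1}T_0^{-1}T_1^{-1}$, and expanding it between the two flanking $e_0$'s requires the quadratic identity $e_0T_1^{-1}T_0^{-1}T_1^{-1}e_0=-t^{-\frac12}[\![t_0]\!]\,e_0(ae_1)e_0-t^{-1}t_0^{\frac12}e_0^2$ (displayed just before the theorem), not the linear substitution $T_0^{-1}=a_0e_0+t_0^{-\frac12}$; this is where the prefactor $t^{-\frac12}\big(\tfrac{-[\![t_0]\!]}{a_0}\big)$ and the changed constant $-[\![t_0t_k^{-1}]\!]$ (rather than $+[\![t_0t_kt^{-1}]\!]$) actually originate. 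The pairing bookkeeping also changes: the caps of $I_2$ join $(2,3),(4,5),\dots$, so $W_1^{\pm1}$ is left unpaired, and the paper must evaluate $I_2(W_1+W_1^{-1})I_2$ by a separate computation using $-t_0^{-\frac12}T_k-t_0^{\frac12}T_k^{-1}=-[\![t_0]\!]a_ke_k-[\![t_0t_k^{-1}]\!]$; the contributions then assemble into $t^{\frac12}[k]$ (absorbing the $t^{-\frac12}$ in the statement's normalization), not the bare string of monomials $t^{-(k-1)/2},\dots,t^{(k-1)/2}$ you describe. The even half of your proposal stands as written; the odd half needs these repairs before it is a proof.
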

\begin{proof}
\textbf{Case: $k$ even.}
Let 
\begin{align*}
L^{\mathrm{even}} 
&= I_1 \big((ae_2)(ae_4)\cdots (ae_{k-2})e_k\big) I_1
= {\def\K{8}
	\TikZ{[scale=.3]
		\foreach \y in {0,3}{\foreach \x in {1,...,4,7,\K} {\node[V] at (\x,\y){};}}
		\foreach \y in {0,1}{ \node[V] (\K\y) at (\K+.5,\y+1){};}
		\foreach \x in {0,\K}{	\draw[densely dotted] (\x+.5,0)--(\x+.5,3);}
		\foreach \x in {1,3,7}{	\draw[bend right=100] (\x,3) to (\x+1,3) (\x+1,0) to (\x,0);}
		\draw (\K0) to ++(-\K+1,0) to [bend left] ++(0,1) to (\K1);
		\node at (5.5,2.75) {\tiny$\cdots$};
		\node at (5.5,.25) {\tiny$\cdots$};
	} 
}
= \Big( \frac{\(t_kt^{-1}\)}{a_k}\Big) I_1, \\
M^{\mathrm{even}} 
&= I_1 \big(e_0 (ae_2)(ae_4)\cdots (ae_{k-2})\big) I_1 
= {\def\K{8}
	\TikZ{[scale=.3]
		\foreach \y in {0,3}{\foreach \x in {1,...,4,7,\K} {\node[V] at (\x,\y){};}}
		\foreach \y in {0,1}{ \node[V] (0\y) at (0+.5,\y+1){};}
		\foreach \x in {0,\K}{	\draw[densely dotted] (\x+.5,0)--(\x+.5,3);}
		\foreach \x in {1,3,7}{	\draw[bend right=100] (\x,3) to (\x+1,3) (\x+1,0) to (\x,0);}
		\draw (00) to ++(\K-1,0) to [bend right] ++(0,1) to (01);
		\node at (5.5,2.75) {\tiny$\cdots$};
		\node at (5.5,.25) {\tiny$\cdots$};
	}
} = \Big( \frac{\(t_0t^{-1}\)}{a_0} \Big) I_1,
\quad\hbox{and} \\ 
P^{\mathrm{even}} 
&= I_1 \big( (ae_2) (ae_4) \cdots (ae_{k-2}) \big) I_1 
= {\def\K{8}
		\TikZ{[scale=.3]
		\foreach \y in {0,3}{\foreach \x in {1,...,4,7,\K} {\node[V] at (\x,\y){};}}
		\foreach \x in {0,\K}{	\draw[densely dotted] (\x+.5,0)--(\x+.5,3);}
		\foreach \x in {1,3,7}{	\draw[bend right=100] (\x,3) to (\x+1,3) (\x+1,0) to (\x,0);}
		\draw (1.5,1) to ++(\K-2,0) to [bend right] ++(0,1) to ++(-\K+2,0) to [bend right] (1.5,1);
		\node at (5.5,2.75) {\tiny$\cdots$};
		\node at (5.5,.25) {\tiny$\cdots$};
	} 
}
= -\(t\) I_1.
\end{align*}
Using $T_0^{-1}= a_0e_0+t_0^{-\frac12}$ for the left pole and $T_k = a_ke_k+t_k^{\frac12}$ for the right pole,
\begin{align*}
D^{\mathrm{even}} 
&= a_0a_k I_1I_2I_1 
+ a_0 t_k^{\frac12} M^{\mathrm{even}} 
+ a_k t_0^{-\frac12} L^{\mathrm{even}} 
+ t_0^{\frac12}t_k^{\frac12} P^{\mathrm{even}}
\\
&= a_0a_k I_1I_2I_1
+ (t_k^{\frac12}\(t_0t^{-1}\) +  t_0^{-\frac12}\(t_kt^{-1}\) - t_0^{-\frac12}t_k^{\frac12} [\![t]\!] ) I_1 \\
&= a_0a_k I_1 I_2 I_1 + [\![ t_0t_kt^{-1}]\!] I_1,
\end{align*}
which completes the proof of the first statement.

Using $(ae_1)T_1^{-1} = (-t^{\frac12})(ae_1)$ and $(ae_1)T_1T_0(ae_1) = t^{\frac12}(ae_1)T_0^{-1}(ae_1)$ gives
$$
R^{\mathrm{even}} = I_1 \big(T_1^{-1} (ae_2) (ae_4) \cdots (ae_{k-2}) T_kT_1T_0 \big)I_2
=
{\def\K{8}
\TikZ{[scale=.3]
	\PoleCaps[.15,\K+.85][0,4]
	\draw (5,3) .. controls (5,2.8) .. (6,2.7) to (8.2,2.7);
	\draw (6,2)--(8.2,2) .. controls (9.5,2) ..  (9.5,2.35)
			(6,2)--(1,2) .. controls (-.5,2) .. (-.5,1.5);
	\Pole[.15][0,4]\Pole[\K+.85][0,4][\K]
	\foreach \x in {6}{\draw[over] (\x,3)--(\x,1);}
	\draw[over] (8.2,2.7) .. controls (9.5,2.7) ..  (9.5,2.35);	
        \draw[bend right=100] (5,1) to (6,1) (6,3) to (5,3);
	\foreach \x in {1,3,5,7}{
			\draw[bend right=100] (\x+1,0) to (\x,0);
			\draw[bend right=100] (\x,4) to (\x+1,4);
			}
	\draw[over] (-.5,1.5) .. controls (-.5,1.3) .. (.5,1.3) to (4.7,1.3) .. controls (5,1.3) .. (5,1);
	\foreach \y in {0,4}{\foreach \x in {1,...,\K} {\node[V] at  (\x,\y){};}}
	}
}
= (-t^\frac12)t^{\frac12} D^{\mathrm{even}},
$$
and using $T_{k-1}(ae_{k-1}) = (-t^{-\frac12})(ae_{k-1})$ 
and $T_{k-1}^{-1}T_k^{-1}(ae_{k-1}) = t^{-\frac12}T_k(ae_{k-1})$ gives
$$
S^{\mathrm{even}} = I_1 \big(T_0(ae_2)(ae_4)\cdots (ae_{k-2}) T_{k-1}^{-1}T_k^{-1}T_{k-1} \big) I_1
=
{\def\K{8}
\TikZ{[xscale=.3, yscale=-.3]
	\PoleCaps[.15,\K+.85][0,4]
	\draw (5,3) .. controls (5,2.8) .. (6,2.7) to (8.2,2.7);
	\draw (6,2)--(8.2,2) .. controls (9.5,2) ..  (9.5,2.35)
			(6,2)--(1,2) .. controls (-.5,2) .. (-.5,1.5);
	\Pole[.15][0,4]\Pole[\K+.85][0,4][\K]
	\foreach \x in {6}{\draw[over] (\x,3)--(\x,1);}
	\draw[over] (8.2,2.7) .. controls (9.5,2.7) ..  (9.5,2.35);	
        \draw[bend right=100] (5,1) to (6,1) (6,3) to (5,3);
	\foreach \x in {1,3,5,7}{
			\draw[bend right=100] (\x+1,0) to (\x,0);
			\draw[bend right=100] (\x,4) to (\x+1,4);
			}
	\draw[over] (-.5,1.5) .. controls (-.5,1.3) .. (.5,1.3) to (4.7,1.3) .. controls (5,1.3) .. (5,1);
	\foreach \y in {0,4}{\foreach \x in {1,...,\K} {\node[V] at (\x,\y){};}}
	}
}
=(-t^{-\frac12})t^{-\frac12}D^{\mathrm{even}}.
$$
Pictorially,
{\def\K{8}
$$
I_1W_{1+2i}I_1 = 
\TikZ{[scale=.3]
	\PoleCaps[.15,\K+.85][0,4]
	\draw (5,3) .. controls (5,2.8) .. (6,2.7) to (8.2,2.7);
	\draw (6,2)--(8.2,2) .. controls (9.5,2) ..  (9.5,2.35)
			(6,2)--(1,2) .. controls (-.5,2) .. (-.5,1.5);
	\Pole[.15][0,4]\Pole[\K+.85][0,4][\K]
	\foreach \x in {1, 2,3,4,6,7,8}{\draw[over] (\x,3)--(\x,1);}
	\draw[over] (8.2,2.7) .. controls (9.5,2.7) ..  (9.5,2.35);	
	\foreach \x in {1,3,5,7}{
		\foreach \y in {4,1}{
			\draw[bend right=100] (\x,\y) to (\x+1,\y) (\x+1,\y-1) to (\x,\y-1);}}
	\draw[over] (-.5,1.5) .. controls (-.5,1.3) .. (.5,1.3) to (4.7,1.3) .. controls (5,1.3) .. (5,1);
	\foreach \y in {0,4}{\foreach \x in {1,...,\K} {\node[V] at  (\x,\y){};}}
	}\ ,
\qquad
I_1W_{2+2i}I_1 =
\TikZ{[scale=.3]
	\PoleCaps[.15,\K+.85][0,4]
	\draw (6,3) .. controls (6,2.8) .. (7,2.7) to (8.2,2.7);
	\draw (6,2)--(8.2,2) .. controls (9.5,2) ..  (9.5,2.35)
			(6,2)--(1,2) .. controls (-.5,2) .. (-.5,1.5);
	\Pole[.15][0,4]\Pole[\K+.85][0,4][\K]
	\foreach \x in {1, 2,3,4,5,7,8}{\draw[over] (\x,3)--(\x,1);}
	\draw[over] (8.2,2.7) .. controls (9.5,2.7) ..  (9.5,2.35);	
	\foreach \x in {1,3,5,7}{
		\foreach \y in {4,1}{
			\draw[bend right=100] (\x,\y) to (\x+1,\y) (\x+1,\y-1) to (\x,\y-1);}}
	\draw[over] (-.5,1.5) .. controls (-.5,1.3) .. (.5,1.3) to (5.7,1.3) .. controls (6,1.3) .. (6,1);
	\foreach \y in {0,4}{\foreach \x in {1,...,\K} {\node[V] at  (\x,\y){};}}
	}\ ,
$$
	}
{\def\K{8}
$$
I_1W_{1+2i}^{-1}I_1
= \TikZ{[xscale=.3, yscale=-.3]
	\PoleCaps[.15,\K+.85][0,4]
	\draw (5,3) .. controls (5,2.8) .. (6,2.7) to (8.2,2.7);
	\draw (6,2)--(8.2,2) .. controls (9.5,2) ..  (9.5,2.35)
			(6,2)--(1,2) .. controls (-.5,2) .. (-.5,1.5);
	\Pole[.15][0,4]\Pole[\K+.85][0,4][\K]
	\foreach \x in {1, 2,3,4,6,7,8}{\draw[over] (\x,3)--(\x,1);}
	\draw[over] (8.2,2.7) .. controls (9.5,2.7) ..  (9.5,2.35);	
	\foreach \x in {1,3,5,7}{
		\foreach \y in {4,1}{
			\draw[bend right=100] (\x,\y) to (\x+1,\y) (\x+1,\y-1) to (\x,\y-1);}}
	\draw[over] (-.5,1.5) .. controls (-.5,1.3) .. (.5,1.3) to (4.7,1.3) .. controls (5,1.3) .. (5,1);
	\foreach \y in {0,4}{\foreach \x in {1,...,\K} {\node[V] at  (\x,\y){};}}
	}\ , \quad \text{ and } \quad 
I_1W_{2+2i}^{-1}I_1 = 
\TikZ{[xscale=.3, yscale=-.3]
	\PoleCaps[.15,\K+.85][0,4]
	\draw (6,3) .. controls (6,2.8) .. (7,2.7) to (8.2,2.7);
	\draw (6,2)--(8.2,2) .. controls (9.5,2) ..  (9.5,2.35)
			(6,2)--(1,2) .. controls (-.5,2) .. (-.5,1.5);
	\Pole[.15][0,4]\Pole[\K+.85][0,4][\K]
	\foreach \x in {1, 2,3,4,5,7,8}{\draw[over] (\x,3)--(\x,1);}
	\draw[over] (8.2,2.7) .. controls (9.5,2.7) ..  (9.5,2.35);	
	\foreach \x in {1,3,5,7}{
		\foreach \y in {4,1}{
			\draw[bend right=100] (\x,\y) to (\x+1,\y) (\x+1,\y-1) to (\x,\y-1);}}
	\draw[over] (-.5,1.5) .. controls (-.5,1.3) .. (.5,1.3) to (5.7,1.3) .. controls (6,1.3) .. (6,1);
	\foreach \y in {0,4}{\foreach \x in {1,...,\K} {\node[V] at  (\x,\y){};}}
	}\ .
$$
	}
Working left to right removing loops,
\begin{align*}
I_1W_{1+2i}I_1 &= \big( t^{\frac12} t^{\frac12} ( -[\![t]\!] ) \big)^i  
(t^{-\frac12}t^{\frac12}( -[\![t]\!] ) )^{\frac{k}{2}-1-i} R^{\mathrm{even}}
= (-[\![t]\!])^{\frac{k}{2}-1} t^{i+\frac12} (-t^{\frac12}) D^{\mathrm{even}}, \\
I_1W^{-1}_{1+2i}I_1 
&= \big(t^{-\frac12} t^{-\frac12} ( -[\![t]\!] ) \big)^i 
\big( t^{-\frac12} t^{\frac12} ( -[\![t]\!] ) \big)^{\frac{k}{2}-1-i} S
= (-[\![t]\!])^{\frac{k}{2}-1} t^{-(i+\frac12)} (-t^{-\frac12}) D^{\mathrm{even}}, 
\end{align*}
for $i\in \{0, \ldots, \frac{k}{2}-1\}$.  
Since $I_1W_{1+2i}I_1$ and $I_1 W_{2+2i}I_1$ only differ by two twists
(similarly $I_1W^{-1}_{1+2i}I_1$ and $I_1 W^{-1}_{2+2i}I_1$ only differ by two twists)
the relations $T_i^{\pm1}(ae_i) = (ae_i)T_i^{\pm1} = (-t^{\mp\frac12})(ae_i)$ give
\begin{align*}
I_1W_{2+2i}I_1 &= (-t^{-\frac12})(-t^{-\frac12})t^{-1}I_1W_{1+2i}I_1
= (-[\![t]\!])^{\frac{k}{2}-1} t^{i+\frac12} (-t^{-\frac12}) D^{\mathrm{even}}
\quad\hbox{and} \\
I_1W^{-1}_{2+2i}I_1
&= (-t^{\frac12})(-t^{-\frac12}) I_1 W^{-1}_{1+2i} I_1
= (-[\![t]\!])^{\frac{k}{2}-1}  t^{-(i+\frac12)} (-t^{\frac12}) D^{\mathrm{even}},
\quad\hbox{for $i\in \{0, \ldots, \frac{k}{2}-1\}$.}
\end{align*}
Thus
\begin{align*}
( -[\![t]\!] )^{\frac{k}{2}} Z I_1 
&= ZI_1^{2} = I_1ZI_1 
= \sum_{i=0}^{\frac{k}{2}-1} 
I_1(W_{1+2i}+W_{2+2i}+W^{-1}_{1+2i}+W^{-1}_{2+2i})I_1 \\
&= -(- [\![ t ]\!] )^{\frac{k}{2}-1}  D^{\mathrm{even}} 
\sum_{i=0}^{\frac{k}{2}-1} \big(t^{i+\frac12}(t^{\frac12}+t^{-\frac12}) + t^{-(i+\frac12)}(t^{\frac12}+t^{-\frac12})\big) \\
&= (- [\![ t ]\!] )^{\frac{k}{2}}  D^{\mathrm{even}} 
\Big( \frac{ t^{\frac{k}{2}}-t^{-\frac{k}{2}} }{t^{\frac12}-t^{-\frac12}} \Big)
= (- [\![ t ]\!] )^{\frac{k}{2}}  [k] D^{\mathrm{even}}.
\end{align*}

\medskip\noindent
\textbf{Case: $k$ odd.}  Let 
\begin{align*}
L^{\mathrm{odd}} 
&= I_2\big( (ae_3) (ae_5) \cdots (ae_{k-2}) e_k \big) I_2 = 
	{\def\K{9}
	\TikZ{[scale=.3]
		\foreach \x in {1,2,...,5,8,\K} {
				\node[V] (t\x) at (\x,3){};
				\node[V] (b\x) at (\x,0){};}
			\node[V] (r1) at (\K+.75,1){};
			\node[V] (r2) at (\K+.75,2){};
			\node[V] (bl) at (.25,1-.5){};
			\node[V] (tl) at (.25,2.5){};
			\node[V] (l1) at (.25,1){};
			\node[V] (l2) at (.25,2){};
		\draw (l1) to  [bend right=100] (l2);
		\foreach \x in {-.25,\K+.25}{	\draw[densely dotted] (\x+.5,0)--(\x+.5,3);}
		\foreach \x in {2,4,8}{	\draw[bend right=100] (\x,3) to (\x+1,3) (\x+1,0) to (\x,0);}
		\draw (r1) to ++(-\K+1,0) to [bend left] ++(0,1) to (r2);
		\draw[bend right=30] (tl)node[V]{} to (t1) 	(b1)  to (bl) node[V]{} ;
		\node at (6.5,2.75) {\tiny$\cdots$};
		\node at (6.5,.25) {\tiny$\cdots$};
	} 
}
 = \Big(\frac{-\(t_0\)}{a_0}\Big)
\Big( \frac{\(t_kt^{-1}\)}{a_k}\Big) I_2,  \\
M^{\mathrm{odd}} 
&= I_2\big((ae_1)(ae_3)\cdots (ae_{k-2})\big)I_2 = 
	{\def\K{9}
	\TikZ{[scale=.3]
		\foreach \x in {1,2,...,5,8,\K} {
				\node[V] (t\x) at (\x,3){};
				\node[V] (b\x) at (\x,0){};}
			\node[V] (l1) at (.25,1){};
			\node[V] (l2) at (.25,2){};
			\node[V] (bl) at (.25,1-.5){};
			\node[V] (tl) at (.25,2.5){};
		\foreach \x in {-.25,\K+.25}{	\draw[densely dotted] (\x+.5,0)--(\x+.5,3);}
		\foreach \x in {2,4,8}{	\draw[bend right=100] (\x,3) to (\x+1,3) (\x+1,0) to (\x,0);}
		\draw (l1) to ++(\K-.25,0) to [bend right] ++(0,1) to (l2);
		\draw[bend right=30] (tl)node[V]{} to (t1) 	(b1)  to (bl) node[V]{} ;
		\node at (6.5,2.75) {\tiny$\cdots$};
		\node at (6.5,.25) {\tiny$\cdots$};
	} 
}
 	= \Big( \frac{-\(t_0\)}{a_0} \Big) I_2,
\quad\hbox{and} \\ 
P^{\mathrm{odd}} 
&= I_2\big( (ae_3)(ae_5) \cdots (ae_{k-2}) \big)I_2 = 
{\def\K{9}
	\TikZ{[scale=.3]
		\foreach \x in {1,2,...,5,8,\K} {
				\node[V] (t\x) at (\x,3){};
				\node[V] (b\x) at (\x,0){};}
			\node[V] (bl) at (.25,1-.5){};
			\node[V] (tl) at (.25,2.5){};
			\node[V] (l1) at (.25,1){};
			\node[V] (l2) at (.25,2){};
		\draw (l1) to  [bend right=100] (l2);
		\foreach \x in {-.25,\K+.25}{	\draw[densely dotted] (\x+.5,0)--(\x+.5,3);}
		\foreach \x in {2,4,8}{	\draw[bend right=100] (\x,3) to (\x+1,3) (\x+1,0) to (\x,0);}
		\draw (3,1) to (9,1) to [bend right] (9,2) to (2,2) to [bend right] (2,1) to (3,1);
		\draw[bend right=30] (tl)node[V]{} to (t1) 	(b1)  to (bl) node[V]{} ;
		\node at (6.5,2.75) {\tiny$\cdots$};
		\node at (6.5,.25) {\tiny$\cdots$};
	} 
}
 = \Big(\frac{-\(t_0\)}{a_0}\Big) ( -\(t\) ) I_2.
\end{align*}
Using
$e_0 T_1^{-1} T_0^{-1} T_1^{-1}  e_0 
= - t^{-\frac12}[\![t_0]\!] e_0 (ae_1) e_0  - t^{-1} t_0^{\frac12}e_0^2$ and
$T_k = a_ke_k+t_k^{\frac12}$ gives
\begin{align*}
D^{\mathrm{odd}}
&= - t^{-\frac12}[\![t_0]\!] a_k I_2I_1I_2 
- t^{-1}t_0^{\frac12} a_k L^{\mathrm{odd}} 
- t^{-\frac12}  [\![t_0]\!] t_k^{\frac12} M^{\mathrm{odd}}
- t^{-1} t_0^{\frac12} t_k^{\frac12} P^{\mathrm{odd}} \\
&= t^{-\frac12} \Big( \frac{-\(t_0\)}{a_0} \Big) \left(
 a_0a_k I_2I_1I_2 + \big(
	-t^{-\frac12} t_0^{\frac12} \(t_kt^{-1}\) 
	-t_k^{\frac12}\(t_0\)
	+ t^{-\frac12}t_0^{\frac12}t_k^{\frac12} \(t\) \big) I_2
\right)\\
&= t^{-\frac12} \Big( \frac{-\(t_0\)}{a_0} \Big) \left(a_0a_k I_2I_1I_2 -\(t_0 t_k^{-1}\) I_2\right),
\end{align*}
%
which completes the proof of the first statement.

Using $(ae_2)T_2^{-1} = -t^{\frac12}(ae_2)$ 
and $T_2T_1T_0T_1(ae_2) = t^{\frac32} T_1^{-1}T_0^{-1}T_1^{-1}(ae_2)$,
$$
{\def\K{9}
R^{\mathrm{odd}} 
= I_2(T_2^{-1} (ae_3) (ae_5) \cdots (ae_{k-2}) T_k T_2T_1T_0T_1)I_2 
= 
\TikZ{[scale=.3]
	\PoleCaps[.15,\K+.85][0,4]
	\foreach \y in {0,4}{\foreach \x in {1,...,4,5,8,9} {\node[V] at (\x,\y){};}}
	\node at (6.5,2.35) {\tiny$\cdots$};
	\foreach \y in {4,1}{
		\draw [bend left=75] (1,\y) to (.6,\y-.3)node[V]{}   (.6,\y-.7)node[V]{}  to (1,\y-1) ;
		\draw[densely dotted]  (.6,\y)--(.6,\y-1) ; 
		}
	\foreach \x in {2,4,8}{\draw[bend right=100] (\x,4) to (\x+1,4);}
	\foreach \x in {2,4,8}{\draw[bend right=100] (\x+1,0) to (\x,0);}
	\draw[bend right=100](2,1) to (2+1,1) (2+1,4-1) to (2,4-1);
	\draw (2,3) .. controls (2,2.8) .. (3,2.7) to (6,2.7) (7,2.7)--(9.2,2.7);
	\draw (7,2)--(9.2,2) .. controls (10.5,2) ..  (10.5,2.35)
			(6,2)--(1,2) .. controls (-.5,2) .. (-.5,1.5);
	\Pole[.15][0,4]\Pole[\K+.85][0,4][\K]
	\foreach \x in {1,3}{\draw[over] (\x,3)--(\x,1);}
	\draw[over] (9.2,2.7) .. controls (10.5,2.7) ..  (10.5,2.35);
	\draw[over](-.5,1.5)..controls(-.5,1.25)..(1.75,1.25)..controls(2,1.25)..(2,1);
	}
	= (-t^{\frac12})t^{\frac32}D^{\mathrm{odd}}.
}
$$
Using $T_{k-1}(ae_{k-1}) = -t^{-\frac12}(ae_{k-1})$ and $T_{k-1}^{-1}T_k^{-1}(ae_{k-1}) 
= t^{-\frac12}  T_k (ae_{k-1})$ gives
{\def\K{9}
$$S^{\mathrm{odd}} 
= I_2(T_1^{-1}T_0^{-1}T_1^{-1} (ae_3)(ae_5)\cdots (ae_{k-2}) T_{k-1}^{-1}T_k^{-1}T_{k-1})I_2 
=\TikZ{[scale=.3]
	\PoleCaps[.15,\K+.85][0,4]
	\foreach \y in {0,4}{\foreach \x in {1,...,4,5,8,9} {\node[V] at (\x,\y){};}}
	\node at (6.5,2.35) {\tiny$\cdots$};
	\foreach \y in {4,1}{
		\draw [bend left=75] (1,\y) to (.6,\y-.3)node[V]{}   (.6,\y-.7)node[V]{}  to (1,\y-1) ;
		\draw[densely dotted]  (.6,\y)--(.6,\y-1) ; 
		}
	\foreach \x in {2,4,8}{\draw[bend right=100] (\x,4) to (\x+1,4);}
	\foreach \x in {2,4,8}{\draw[bend right=100] (\x+1,0) to (\x,0);}
	\draw[bend right=100](8,1) to (8+1,1) (8+1,4-1) to (8,4-1);
	\draw (-.5,2.25).. controls(-.5,2) .. (0,2) to (6,2);
	\draw (7,2)--(9.2,2) .. controls (10.5,2) ..  (10.5,1.5);
	\Pole[.15][0,4]\Pole[\K+.85][0,4][\K]
	\draw[over] (10.5,1.5).. controls (10.5,1.25) ..  (9.2,1.25) to (8.5,1.25) 
			.. controls (8,1.25) .. (8,1) ;
	\foreach \x in {1,9}{\draw[over] (\x,3)--(\x,1);}
	\draw[over] (8,3) .. controls (8,2.8) .. (7,2.7) 
		(6,2.7) to (0,2.7) .. controls(-.5,2.7) .. (-.5,2.25);
	}
= (-t^{-\frac12})t^{-\frac12}D^{\mathrm{odd}}.
$$}
Pictorially,
{\def\K{9}
$$
I_2W_{1+2i}I_2 = \TikZ{[scale=.3]
	\PoleCaps[.15,\K+.85][0,4]
	\foreach \y in {0,4}{\foreach \x in {1,...,9} {\node[V] at (\x,\y){};}}
	\foreach \y in {4,1}{
		\draw [bend left=75] (1,\y) to (.6,\y-.3)node[V]{}   (.6,\y-.7)node[V]{}  to (1,\y-1) ;
		\draw[densely dotted]  (.6,\y)--(.6,\y-1) ; 
		\foreach \x in {2,4,6,8}{\draw[bend right=100] (\x,\y) to (\x+1,\y) (\x+1,\y-1) to (\x,\y-1);}}
	\draw (6,3) .. controls (6,2.8) .. (7,2.7) to (9.2,2.7);
	\draw (7,2)--(9.2,2) .. controls (10.5,2) ..  (10.5,2.35)
			(7,2)--(1,2) .. controls (-.5,2) .. (-.5,1.5);
	\Pole[.15][0,4]\Pole[\K+.85][0,4][\K]
	\foreach \x in {1,2,3,4,5,7,8,9}{\draw[over] (\x,3)--(\x,1);}
	\draw[over] (9.2,2.7) .. controls (10.5,2.7) ..  (10.5,2.35);
	\draw[over](-.5,1.5)..controls(-.5,1.25)..(5.75,1.25)..controls(6,1.25)..(6,1);
	}\ ,
\qquad
I_2W_{2+2i}I_2 = \TikZ{[scale=.3]
	\PoleCaps[.15,\K+.85][0,4]
	\foreach \y in {0,4}{\foreach \x in {1,...,9} {\node[V] at (\x,\y){};}}
	\foreach \y in {4,1}{
		\draw [bend left=75] (1,\y) to (.6,\y-.3)node[V]{}   (.6,\y-.7)node[V]{}  to (1,\y-1) ;
		\draw[densely dotted]  (.6,\y)--(.6,\y-1) ; 
		\foreach \x in {2,4,6,8}{\draw[bend right=100] (\x,\y) to (\x+1,\y) (\x+1,\y-1) to (\x,\y-1);}}
	\draw (7,3) .. controls (7,2.8) .. (8,2.7) to (9.2,2.7);
	\draw (7,2)--(9.2,2) .. controls (10.5,2) ..  (10.5,2.35)
			(7,2)--(1,2) .. controls (-.5,2) .. (-.5,1.5);
	\Pole[.15][0,4]\Pole[\K+.85][0,4][\K]
	\foreach \x in {1,2,3,4,5,6,8,9}{\draw[over] (\x,3)--(\x,1);}
	\draw[over] (9.2,2.7) .. controls (10.5,2.7) ..  (10.5,2.35);
	\draw[over](-.5,1.5)..controls(-.5,1.25)..(6.75,1.25)..controls(7,1.25)..(7,1);
	}\ ,
$$
	}
{\def\K{9}
$$
I_2W_{1+2i}^{-1}I_2 = 
\TikZ{[xscale=.3,yscale=-.3]
	\PoleCaps[.15,\K+.85][0,4]
	\foreach \y in {0,4}{\foreach \x in {1,...,9} {\node[V] at (\x,\y){};}}
	\foreach \y in {4,1}{
		\draw [bend left=75] (1,\y) to (.6,\y-.3)node[V]{}   (.6,\y-.7)node[V]{}  to (1,\y-1) ;
		\draw[densely dotted]  (.6,\y)--(.6,\y-1) ; 
		\foreach \x in {2,4,6,8}{\draw[bend right=100] (\x,\y) to (\x+1,\y) (\x+1,\y-1) to (\x,\y-1);}}
	\draw (6,3) .. controls (6,2.8) .. (7,2.7) to (9.2,2.7);
	\draw (7,2)--(9.2,2) .. controls (10.5,2) ..  (10.5,2.35)
			(7,2)--(1,2) .. controls (-.5,2) .. (-.5,1.5);
	\Pole[.15][0,4]\Pole[\K+.85][0,4][\K]
	\foreach \x in {1,2,3,4,5,7,8,9}{\draw[over] (\x,3)--(\x,1);}
	\draw[over] (9.2,2.7) .. controls (10.5,2.7) ..  (10.5,2.35);
	\draw[over](-.5,1.5)..controls(-.5,1.25)..(5.75,1.25)..controls(6,1.25)..(6,1);
	}\ , \quad \text{ and } \quad 
I_2W_{2+2i}^{-1}I_2 = 
\TikZ{[xscale=.3,yscale=-.3]
	\PoleCaps[.15,\K+.85][0,4]
	\foreach \y in {0,4}{\foreach \x in {1,...,9} {\node[V] at (\x,\y){};}}
	\foreach \y in {4,1}{
		\draw [bend left=75] (1,\y) to (.6,\y-.3)node[V]{}   (.6,\y-.7)node[V]{}  to (1,\y-1) ;
		\draw[densely dotted]  (.6,\y)--(.6,\y-1) ; 
		\foreach \x in {2,4,6,8}{\draw[bend right=100] (\x,\y) to (\x+1,\y) (\x+1,\y-1) to (\x,\y-1);}}
	\draw (7,3) .. controls (7,2.8) .. (8,2.7) to (9.2,2.7);
	\draw (7,2)--(9.2,2) .. controls (10.5,2) ..  (10.5,2.35)
			(7,2)--(1,2) .. controls (-.5,2) .. (-.5,1.5);
	\Pole[.15][0,4]\Pole[\K+.85][0,4][\K]
	\foreach \x in {1,2,3,4,5,6,8,9}{\draw[over] (\x,3)--(\x,1);}
	\draw[over] (9.2,2.7) .. controls (10.5,2.7) ..  (10.5,2.35);
	\draw[over](-.5,1.5)..controls(-.5,1.25)..(6.75,1.25)..controls(7,1.25)..(7,1);
	}\ .
$$
	}
Working left to right removing loops,
\begin{align*}
I_2W_{2+2i}I_2 &= \big( t^{\frac12} t^{\frac12} (-[\![t]\!] ) \big)^i  
\big( t^{-\frac12} t^{\frac12} ( -[\![t]\!] ) \big)^{\frac{k-1}{2}-1-i} R^{\mathrm{odd}}
= (- [\![t]\!])^{\frac{k-3}{2}} t^{i+1}(-t) D^{\mathrm{odd}}, \\
I_2W^{-1}_{2+2i}I_2 
&= \big( t^{-\frac12} t^{-\frac12} ( -[\![t]\!] ) \big)^i 
\big( t^{-\frac12} t^{\frac12} ( -[\![t]\!] ) \big)^{\frac{k-1}{2}-1-i} S^{\mathrm{odd}}
= ( - [\![t]\!] )^{\frac{k-3}{2}} t^{-(i+1)} (-1) D^{\mathrm{odd}}, 
\end{align*}
for $i\in \{0, \ldots, \frac{k-1}{2}-1\}$.  
Since $I_2W_{2+2i}I_2$ and $I_2 W_{3+2i}I_2$ only differ by two twists
(similarly $I_2W^{-1}_{2+2i}I_2$ and $I_2 W^{-1}_{3+2i}I_2$ only differ by two twists)
the relations $T_i^{\pm1}e_i = e_iT_i^{\pm1} = (-t^{\mp\frac12})e_i$ give
\begin{align*}
I_2W_{3+2i}I_2 &= (-t^{-\frac12})(-t^{-\frac12}) I_2 W_{2+2i} I_2 
= (- [\![t]\!])^{\frac{k-3}2}t^{i+1} (-1)D^{\mathrm{odd}}, \quad \text{and}
\\
I_2W_{3+2i}^{-1} I_2 &= (-t^{\frac12})(-t^{\frac12}) I_2 W_{2+2i}^{-1} I_2 
= (- [\![t]\!])^{\frac{k-3}2}t^{-(i+1)} (-t)D^{\mathrm{odd}},
\qquad\hbox{for $i\in \{0, \ldots, \frac{k-1}{2}-1\}$.}
\end{align*}
Next,
\begin{align*}
I_2 W_1 I_2 &= 
{\def\K{9}	
\TikZ{[scale=.3]
	\PoleCaps[.15,\K+.85][0,4]
	\foreach \y in {0,4}{\foreach \x in {1,...,4,5,8,9} {\node[V] at (\x,\y){};}}
	\node at (6.5,2.35) {\tiny$\cdots$};
	\foreach \y in {4,1}{
		\draw [bend left=75] (1,\y) to (.6,\y-.3)node[V]{}   (.6,\y-.7)node[V]{}  to (1,\y-1) ;
		\draw[densely dotted]  (.6,\y)--(.6,\y-1) ; 
		\foreach \x in {2,4,8}{\draw[bend right=100] (\x,\y) to (\x+1,\y) (\x+1,\y-1) to (\x,\y-1);}}
	\draw (1,3) .. controls (1,2.8) .. (2,2.7) to (6,2.7) (7,2.7)--(9.2,2.7);
	\draw (7,2)--(9.2,2) .. controls (10.5,2) ..  (10.5,2.35)
			(6,2)--(1,2) .. controls (-.5,2) .. (-.5,1.5);
	\Pole[.15][0,4]\Pole[\K+.85][0,4][\K]
	\foreach \x in {2,3,4,5,8,9}{\draw[over] (\x,3)--(\x,1);}
	\draw[over] (9.2,2.7) .. controls (10.5,2.7) ..  (10.5,2.35);
	\Over[1,1][-.5,1.5]
	}
}
= (-t_0^{-\frac12})( - [\![t]\!] )^{\frac{k-1}{2}} I_2((ae_1)(ae_3)\cdots (ae_{k-2})T_k)I_2,
\quad\hbox{and} \\
 I_2 W_1^{-1} I_2 &= 
 	{\def\K{9}	
\TikZ{[xscale=.3, yscale=-.3]
	\PoleCaps[.15,\K+.85][0,4]
	\foreach \y in {0,4}{\foreach \x in {1,...,4,5,8,9} {\node[V] at (\x,\y){};}}
	\node at (6.5,2.35) {\tiny$\cdots$};
	\foreach \y in {4,1}{
		\draw [bend left=75] (1,\y) to (.6,\y-.3)node[V]{}   (.6,\y-.7)node[V]{}  to (1,\y-1) ;
		\draw[densely dotted]  (.6,\y)--(.6,\y-1) ; 
		\foreach \x in {2,4,8}{\draw[bend right=100] (\x,\y) to (\x+1,\y) (\x+1,\y-1) to (\x,\y-1);}}
	\draw (1,3) .. controls (1,2.8) .. (2,2.7) to (6,2.7) (7,2.7)--(9.2,2.7);
	\draw (7,2)--(9.2,2) .. controls (10.5,2) ..  (10.5,2.35)
			(6,2)--(1,2) .. controls (-.5,2) .. (-.5,1.5);
	\Pole[.15][0,4]\Pole[\K+.85][0,4][\K]
	\foreach \x in {2,3,4,5,8,9}{\draw[over] (\x,3)--(\x,1);}
	\draw[over] (9.2,2.7) .. controls (10.5,2.7) ..  (10.5,2.35);
	\Over[1,1][-.5,1.5]
	}
} 
= (-t_0^{\frac12})(-[\![t]\!])^{\frac{k-1}{2}} I_2((ae_1)(ae_3)\cdots (ae_{k-2})T_k^{-1})I_2.
\end{align*}
Using $-t_0^{-\frac12}T_k-t_0^{\frac12}T_k^{-1}
=-t_0^{-\frac12}(a_ke_k+t_k^{\frac12})-t_0^{\frac12}(a_ke_k+t_k^{-\frac12})
=-\(t_0\)a_ke_k - \(t_ot_k^{-1}\)$,
\begin{align*}
I_2(W_1+W_1^{-1})I_2
&= (-\(t\))^{\frac{k-1}{2}}\Big( -\(t_0\) a_k I_2I_1I_2 - \(t_0t_k^{-1}\)M^{\mathrm{odd}}\Big) \\
&= (-\(t\))^{\frac{k-1}{2}}\Big( -\(t_0\) a_k I_2I_1I_2 - \(t_0t_k^{-1}\) \Big(\frac{-\(t_0\)}{a_0}\Big) I_2\Big) \\
&= (-\(t\))^{\frac{k-1}{2}}\Big(\frac{-\(t_0\)}{a_0}\Big) \Big( a_0a_k I_2I_1I_2 - \(t_0t_k^{-1}\) I_2\Big) 
= -(t+1)(-\(t\))^{\frac{k-3}{2}} D^{\mathrm{odd}}.
\end{align*}
Thus
\begin{align*}
\Big(\frac{-[\![t_0]\!]}{a_0}\Big) &( - [\![t]\!] )^{\frac{k-1}{2}}Z I_2 
= Z I_2^2 = I_2 Z I_2 \\
&= I_2(W_1+W_1^{-1})I_2 + \sum_{i=0}^{\frac{k-1}{2}-1} I_2(W_{2+2i}+W_{3+2i}+W_{2+2i}^{-1}+W_{3+2i}^{-1})I_2 \\
&= 
-(t+1)(-\(t\))^{\frac{k-3}{2}} t^{\frac12}D^{\mathrm{odd}} 
+ (-\(t\))^{\frac{k-3}{2}} \Big(\sum_{i=0}^{\frac{k-3}{2}} (t^{i+1}-t^{-(i+1)})(-t-1)D^{\mathrm{odd}}
\Big) \\
&= - (-\(t\))^{\frac{k-3}{2}}(t+1) D^{\mathrm{odd}}\Big( 1+ \sum_{i=0}^{\frac{k-3}{2}} (t^{i+1}-t^{-(i+1)})
\Big) 
= (-\(t\))^{\frac{k-1}{2}} t^{\frac12}D^{\mathrm{odd}} [k].
\end{align*} 
\end{proof}

\begin{cor}\label{IIIovercenter} 
Let $Z= W_1+W_1^{-1}+\cdots+W_k+W_k^{-1}$ and let $I_1$ and $I_2$
be as defined in \eqref{eq:defI1} and \eqref{eq:defI2}.  If $k$ is even, then
$$a_0a_k I_1I_2I_1 = \Big(\frac{1}{[k]} Z-\(t_0t_kt^{-1}\) \Big)I_1\qquad\hbox{and}\qquad
a_0a_k I_2I_1I_2 = \Big(\frac{1}{[k]} Z-\(t_0t_kt^{-1}\) \Big)I_2.$$
If $k$ is odd, then
$$a_0a_k I_1I_2I_1 
= \Big(\frac{1}{[k]} Z+ \(t_0t_k^{-1}\) \Big)I_1
\quad\hbox{and}\quad
a_0a_k I_2I_1I_2 
= \Big(\frac{1}{[k]} Z+ \(t_0t_k^{-1}\) \Big)I_2.
$$
\end{cor}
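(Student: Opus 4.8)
The plan is to read the corollary directly off Theorem~\ref{IIIexpansion} by rearranging its two displayed equations, together with the fact from \eqref{Zdefn} that $Z$ is central in $H_k$, hence in $TL_k$. Because $Z$ is central, each coefficient $\frac{1}{[k]}Z \mp [\![t_0t_kt^{-1}]\!]$ (resp.\ $\frac1{[k]}Z + [\![t_0t_k^{-1}]\!]$) lies in $Z(TL_k)=\CC[Z]$ and may be moved freely past $I_1$ or $I_2$. For $k$ even I would substitute $D^{\mathrm{even}} = a_0a_k I_1I_2I_1 + [\![t_0t_kt^{-1}]\!]I_1$ into $ZI_1 = [k]D^{\mathrm{even}}$ and divide by $[k]$ (assumed nonzero), obtaining $a_0a_k I_1I_2I_1 = \bigl(\frac1{[k]}Z - [\![t_0t_kt^{-1}]\!]\bigr)I_1$. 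For $k$ odd I would cancel the invertible scalar $t^{-\frac12}\frac{-[\![t_0]\!]}{a_0}$ from $t^{-\frac12}\frac{-[\![t_0]\!]}{a_0}ZI_2 = [k]D^{\mathrm{odd}}$, substitute the given $D^{\mathrm{odd}}$, and divide by $[k]$, obtaining $a_0a_k I_2I_1I_2 = \bigl(\frac1{[k]}Z + [\![t_0t_k^{-1}]\!]\bigr)I_2$. This settles the first identity in the even case and the second in the odd case.

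The two remaining identities, namely $a_0a_k I_2I_1I_2 = \bigl(\frac1{[k]}Z - [\![t_0t_kt^{-1}]\!]\bigr)I_2$ for $k$ even and $a_0a_k I_1I_2I_1 = \bigl(\frac1{[k]}Z + [\![t_0t_k^{-1}]\!]\bigr)I_1$ for $k$ odd, are not supplied by Theorem~\ref{IIIexpansion} and require the parallel computation with the roles of $I_1$ and $I_2$ interchanged. For $k$ even I would form the $I_2$-sandwiched element $\widetilde D^{\mathrm{even}} = I_2\bigl(T_0^{-1}(\cdots)T_k\bigr)I_2$ built from the analogous pole-decorated middle string, introduce the analogues of $L^{\mathrm{even}},M^{\mathrm{even}},P^{\mathrm{even}}$ obtained by closing that string against $I_2$, evaluate $I_2 W_j^{\pm1} I_2$ by removing loops from left to right, and sum the resulting geometric series to reach $ZI_2 = [k]\widetilde D^{\mathrm{even}}$ with $\widetilde D^{\mathrm{even}} = a_0a_k I_2I_1I_2 + [\![t_0t_kt^{-1}]\!]I_2$; rearranging as above then gives the claim, and the odd case is identical with $I_1$ in the sandwiching role. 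A less computational alternative, granting that $TL_k$ is torsion free over $Z(TL_k)=\CC[Z]$ and that $I_2\,TL_k\,I_2\subseteq\CC[Z]\,I_2$, is to write the proven relation as $a_0a_k I_1I_2I_1 = cI_1$ with $c=\frac1{[k]}Z - [\![t_0t_kt^{-1}]\!]\in\CC[Z]$, right-multiply by $I_2$ to get $c\,I_1I_2 = a_0a_k I_1(I_2I_1I_2)$, and then write $I_2I_1I_2 = \alpha I_2$ with $\alpha\in\CC[Z]$; this forces $(\alpha - c)I_1I_2 = 0$, and since $I_1I_2\neq 0$ torsion freeness yields $\alpha = c$.

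I expect the main obstacle to be the \emph{identical-constant phenomenon}: the corollary asserts that the same additive constant appears in the $I_1$- and the $I_2$-sandwiched identity within each parity. On the parallel-computation route this amounts to checking that the boundary loop coefficients $\frac{-[\![t_0]\!]}{a_0}$, $\frac{-[\![t_k]\!]}{a_k}$, $\frac{[\![t_0t^{-1}]\!]}{a_0}$, $\frac{[\![t_kt^{-1}]\!]}{a_k}$ recombine to reproduce exactly the constant found in the $I_1$-computation, i.e.\ the analogue of the identity $t_k^{\frac12}[\![t_0t^{-1}]\!] + t_0^{-\frac12}[\![t_kt^{-1}]\!] - t_0^{-\frac12}t_k^{\frac12}[\![t]\!] = [\![t_0t_kt^{-1}]\!]$ already used in evaluating $D^{\mathrm{even}}$. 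On the algebraic route the obstacle is instead the two structural inputs, torsion freeness over $\CC[Z]$ and $I_2\,TL_k\,I_2\subseteq\CC[Z]\,I_2$, which encode that $I_1$ and $I_2$ are essentially rank one; without them the formal identity $I_1\bigl(I_2I_1I_2 - cI_2\bigr)=0$ that drops out of the proven relation does not by itself force $I_2I_1I_2 = cI_2$.
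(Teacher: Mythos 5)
Your proposal is correct, and its two halves line up with the paper as follows. The rearrangements you perform first (even case: substitute $D^{\mathrm{even}} = a_0a_kI_1I_2I_1 + [\![t_0t_kt^{-1}]\!]I_1$ into $ZI_1=[k]D^{\mathrm{even}}$; odd case: cancel the scalar $t^{-\frac12}\bigl(\tfrac{-[\![t_0]\!]}{a_0}\bigr)$ and divide by $[k]$) are exactly how the paper extracts the two directly available identities from Theorem \ref{IIIexpansion}. For the two cross identities, the paper does \emph{not} take your first route of redoing the full diagrammatic computation with the roles of $I_1$ and $I_2$ exchanged; instead it records, from the diagrammatics, only the \emph{existence} of constants with $I_1^2=CI_1$, $I_1I_2I_1=(C_1Z+C_2)I_1$, $I_2^2=DI_2$, $I_2I_1I_2=(D_1Z+D_2)I_2$, and then computes $(I_1I_2I_1)^2$ in two ways to force $C_1Z+C_2=D_1Z+D_2$. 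Your second, algebraic route is essentially this same mechanism in a marginally leaner form: right-multiplying $a_0a_kI_1I_2I_1=cI_1$ by $I_2$ and invoking $I_2I_1I_2=\alpha I_2$ with $\alpha\in\CC[Z]$ yields $(\alpha-c)I_1I_2=0$ directly, without needing the relation $I_1^2=CI_1$ at all. Both versions rest on the same two structural inputs you explicitly flag: first, that $I_2I_1I_2$ lies in $\CC[Z]I_2$ (you assume the stronger $I_2\,TL_k\,I_2\subseteq\CC[Z]I_2$, whereas the paper gets just the needed instance from the mirrored reduction of $I_2ZI_2$ to the single-string diagram $D^{\mathrm{even}}$ or $D^{\mathrm{odd}}$), and second, a nondegeneracy statement ($I_1I_2\neq 0$, or $I_1I_2I_1\neq0$, together with torsion-freeness over the domain $\CC[Z]$) that the paper compresses into the phrase ``which indicates.'' So your proposal is sound; its only inefficiency is the fully parallel recomputation sketched first, which is precisely the labor the paper's coefficient-transfer trick is designed to avoid, and its merit is that it makes explicit the nondegeneracy hypotheses the paper leaves tacit.
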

\begin{proof}
As observed in the proof of Theorem \ref{IIIexpansion}, the products $I_1Z I_1$ and $I_2Z_2$
reduce to computation of the diagram with a single string going around all the poles ($D^{\mathrm{even}}$ or $D^{\mathrm{odd}}$).  These diagrammatics give that
there are constants $C, C_1, C_2$ and $D, D_1, D_2$ such that
$$I_1^2 = CI_1, \quad I_1I_2I_1 = (C_1 Z  +C_2) I_1,
\quad I_2^2=DI_2, \quad I_1I_2I_1 = (D_1 Z  + D_2)I_2.$$
Then, computing $(I_1I_2I_1)^2$ in two different ways, we have
\begin{align*}
I_1I_2I_1I_1I_2I_1 &= CI_1I_2I_1I_2I_1 = C(D_1Z+D_2)I_1I_2I_1, \quad \text{and}\\
I_1I_2I_1I_1I_2I_1 &= (C_1Z+C_2)I_1 I_1I_2I_1 = C(C_1Z+C_2)I_1I_2I_1,
\end{align*}
which indicates that $C_1Z+C_2 = D_1Z+D_2$.

Theorem \ref{IIIexpansion} gives that, if $k$ is even, then
$$a_0a_k I_1 I_2 I_1 
= D^{\mathrm{even}} - [\![ t_0t_kt^{-1}]\!] I_1
= \frac{1}{[k]} ZI_1 - [\![ t_0t_kt^{-1}]\!] I_1,$$
and if $k$ is odd, then
$$a_0a_k I_2I_1I_2 = t^{\frac12}\Big(\frac{a_0}{-\(t_0\)}\Big) D^{\mathrm{odd}}
+\(t_0t_k^{-1}\)I_2
= \frac{1}{[k]}ZI_2 + \(t_0t_k^{-1}\)I_2.
$$
\end{proof}

\begin{remark}\textbf{Comparison to de Gier-Nichols.}\label{rk:conversion-to-GN}
Let us explain how to relate the constants in Corollary \ref{IIIovercenter} and Proposition \ref{centralchar} to the
values which appear in \cite{GN}.  Let
$$
\begin{array}{ccccc}
t_0^{\frac12} = -iq^{\omega_1}, &\quad
&t^{\frac12} = q^{-1}, &\quad
&t_k^{\frac12} = -iq^{\omega_2}, \\
T_0 = -i\zg_0, &\quad
&T_i = -\zg_i, &\quad
&T_k = -i\zg_k, \\
e_0 = \ze_0, &\quad
&e_i = \ze_i, &\quad
&e_k = \ze_k.
\end{array}
$$
Then
$$(\zg_0-q^{\omega_1})(\zg_0-q^{-\omega_1})=0,
\quad
(\zg_i+q^{-1})(\zg_i-q)=0,
\quad
(\zg_k-q^{\omega_1})(\zg_-q^{-\omega_1})=0,
$$
as in \cite[Definitions 2.4, 2.6, and 2.8]{GN}, and
$$\zg_0 = q^{\omega_1}-(q^{1+\omega_1}-q^{-(1+\omega_1)})\ze_0,
\quad
\zg_i = \ze_i-q^{-1},
\quad
\zg_k = q^{\omega_2}-(q^{1+\omega_2}-q^{-(1+\omega_2)})e_k,
$$
as in \cite[(5)]{GN}.  Following \cite[Definitions 2.8 and (9)]{GN},
\begin{align*}
\zJ_0^{(C)} &= \zg_1^{-1}\cdots \zg_{k-1}^{-1}\zg_k\zg_{k-1}\cdots \zg_2\zg_1\zg_0
=(-1)^{k-1}(-i)(-i)(-1)^{k-1}T_1^{-1}\cdots T_{k-1}^{-1}T_kT_{k-1}\cdots T_1T_0 = -W_1, \\
\zJ_i^{(C)} &=\zg_i\zJ_{i-1}^{(C)}\zg_i = (-1)^2T_i(-W_i)T_i = -W_{i+1}
\quad\hbox{for $i\in \{1, \ldots, k-1\}$, and} \\
\zZ_k &= \sum_{i=0}^{k-1} (J_i^{(C)}+(J_i^{(C)})^{-1}) = -(W_1+w_1^{-1}+\cdots+W_k+W_k^{-1})
=-Z.
\end{align*}
Use the notation
$[x] = \frac{t^{\frac{x}{2}} - t^{-\frac{x}{2}} }{t^{\frac12}-t^{-\frac12}} = \frac{q^x-q^{-x} }{ q - q^{-1} }$
and let
$a_0, a$ and $a_k$  take the favorite values from Remark \ref{bdequivi} so that
$$a = -1, \quad a_0 = -\(t_0t^{-1}\), \text{ and }  a_k = -\(t_kt^{-1}\),
\quad\hbox{and set}\quad
\theta = c+\frac{k-1}{2}\quad\hbox{and}\quad
z=\(t^\theta\)[k],
$$
as in Proposition \ref{centralchar}.  Following \cite[Theorem 4.1]{GN} and remembering
that $\zZ_k = -Z$, let
$$\Theta = \theta+\frac{1}{\log q}i\pi
\quad\hbox{so that}\quad
\begin{array}{rl}
-[k]\(t^\theta\) &= -[k](t^{\frac{\theta}{2}}+t^{-\frac{\theta}{2}})
=[k](-q^{-\theta}-q^{\theta}) \\
&=[k](q^{-(\theta+\frac{1}{\log q}i\pi)}+q^{\theta+\frac{1}{\log q}i\pi})
=[k] (q^{-\Theta}+q^{\Theta}) = [k]\frac{[2\Theta]}{[\Theta]}.
\end{array}
$$
Note that
\begin{align*}
a_0a_k &= \(t_0t^{-1}\)\(t_kt^{-1}\) 
= (t_0^{\frac12}t^{-\frac12}+t_0^{-\frac12}t^{\frac12})
(t_k^{\frac12}t^{-\frac12}+t_k^{-\frac12}t^{\frac12}) \\
&=(-iq^{-\omega_1-1}+iq^{\omega_1+1})(-iq^{-\omega_2-1}+iq^{\omega_2+1})
=-[\omega_1+1][\omega_2+1](q-q^{-1})^2.
\end{align*}
Then the constant $b$ that appears in \cite[Definition 3.6 and Theorem 4.1]{GN} 
to make $I_1I_2I_1 = bI_1$ and $I_2I_1I_2 = bI_2$ as operators on a simple $TL_k$-module
is  computed from Corollary \ref{IIIovercenter} as follows:
\begin{align*}
b&=\frac{\frac{1}{[k]} z - \(t_0t_kt^{-1}\)}{a_0a_k} 
=\frac{\frac{1}{[k]}[k][\![t^\theta]\!] - \(t_0t_kt^{-1}\)}{ [\![t_0t^{-1}]\!] [\![t_kt^{-1}]\!]}
= \frac{[\![t^\theta]\!] -[\![t_0t_kt^{-1}]\!]}{ [\![t_0t^{-1}]\!] [\![t_kt^{-1}]\!]}\\
&= - \frac{ (q^\Theta+q^{-\Theta})
+ (-iq^{\omega_1})(-iq^{\omega_2})q+(iq^{-\omega_1})(iq^{-\omega_2})q^{-1} }
{-[\omega_1+1][\omega_2+1](q-q^{-1})^2 } \\
&= \frac{ q^\Theta+q^{-\Theta}
- q^{\omega_1+\omega_2+1} - q^{-(\omega_1+\omega_2+1)} }
{[\omega_1+1][\omega_2+1](q-q^{-1})^2 } \\
&=   \frac{( (q^{\omega_1 + \omega_2 + 1 + \Theta})^{\frac12} 
- (q^{\omega_1 + \omega_2 + 1 + \Theta})^{-\frac12})
(  (q^{\omega_1 + \omega_2 + 1 - \Theta})^{\frac12} - (q^{\omega_1 + \omega_2 + 1 - \Theta})^{-\frac12}))
}
{[\omega_1+1][\omega_2+1](q-q^{-1})^2 } \\
&= \frac{[\half(\omega_1 + \omega_2 + 1 + \Theta)][\half(\omega_1 + \omega_2 + 1 - \Theta)]}{[\omega_1 + 1][\omega_2+1]}
\quad\hbox{when $k$ is even, and}
\end{align*}
\begin{align*}
b
&=\frac{\frac{1}{[k]}z + \(t_0t_k^{-1}\) }{a_0a_k} 
=\frac{\frac{1}{[k]}[k][\![t^\theta]\!] + \(t_0t_k^{-1}\) }{[\![t_0 t^{-1}]\!] [\![t_kt^{-1}]\!]} 
= \frac{[\![t^\theta]\!] + [\![t_0 t_k^{-1}]\!]}{[\![t_0 t^{-1}]\!] [\![t_kt^{-1}]\!]} \\
&= \frac{ -(q^\Theta+q^{-\Theta})
+ (-iq^{\omega_1})(iq^{-\omega_2})+(iq^{-\omega_1})(-iq^{\omega_2}) }
{-[\omega_1+1][\omega_2+1](q-q^{-1})^2 } \\
&= \frac{ -q^\Theta - q^{-\Theta}
+ q^{\omega_1-\omega_2} + q^{-(\omega_1-\omega_2)} }
{- [\omega_1+1][\omega_2+1](q-q^{-1})^2 } \\
&= - \frac{( (q^{\omega_1-\omega_2-\Theta})^{\frac12} - (q^{\omega_1-\omega_2-\Theta})^{-\frac12})
 		( (q^{\omega_1-\omega_2+\Theta})^{\frac12} - (q^{\omega_1-\omega_2+\Theta})^{-\frac12})
 }
{[\omega_1+1][\omega_2+1](q-q^{-1})^2 } \\
&= - \frac{[\half(\omega_1 - \omega_2  - \Theta)][\half(\omega_1 - \omega_2 + \Theta)]}
{[\omega_1 + 1][\omega_2+1]}
	\qquad\hbox{when $k$ is odd.}
\end{align*}
\end{remark}

\section{Calibrated representations of $H^{\mathrm{ext}}_k$ and $TL^{\mathrm{ext}}_k$}

In this section we classify and construct all irreducible calibrated representations of the 
extended two boundary Temperley-Lieb algebras $TL^{\mathrm{ext}}_k$.  
This is done by using the classification
of irreducible calibrated $H_k^{\mathrm{ext}}$-modules from \cite{DR}, which we begin by reviewing in Sections \ref{sec:Calibrated H reps} and \ref{configurationsofboxes}.  Using the
formulas for the elements $p_i^{(1^3)}$,
$p_0^{(\emptyset, 1^2)}$, $p_0^{(1^2, \emptyset)}$, $p_{0^\vee}^{(\emptyset, 1^2)}$, and 
$p_{0^\vee}^{(1^2,\emptyset)}$ that one quotients $H_k^{\mathrm{ext}}$ by to obtain $TL^{\mathrm{ext}}_k$,
we determine exactly which irreducible calibrated representations of $H_k^{\mathrm{ext}}$
factor through the quotient, thus providing a full classification of irreducible calibrated
representations of $TL^{\mathrm{ext}}_k$.

\subsection{Calibrated representations of $H^{\mathrm{ext}}_k$}
\label{sec:Calibrated H reps}

A \emph{calibrated $H_k^{\mathrm{ext}}$-module} is an $H_k^{\mathrm{ext}}$-module $M$ such 
that $W_0, W_1, \ldots, W_k$ are simultaneously diagonalizable as operators on $M$.  
Let $
r_1, r_2\in \CC$ such that 
\begin{equation}
-t^{r_1} = -t_k^{\frac12}t_0^{-\frac12} \quad\hbox{ and }\quad -t^{r_2}=t_k^{\frac12}t_0^{\frac12}.
\label{eq:r1andr2}
\end{equation}
For $\cc =(c_1, \ldots, c_k)\in \CC^k$ let $c_{-i}=-c_i$ and define
\begin{align}
Z(\cc) 
&= \{ \varepsilon_i \ |\ c_i = 0\}
\sqcup \{\varepsilon_j-\varepsilon_i\ |\ \hbox{$0<i<j$ and $c_j-c_i=0$} \}, \nonumber \\
&\hspace{1.6in}
\sqcup \{ \varepsilon_j+\varepsilon_i\ |\ \hbox{$0<i<j$ and $c_j+c_i=0$} \},
\label{Z(c)origdefn} \\
P(\cc) 
&= \{ \varepsilon_i \ |\ c_i \in \{ \pm r_1, \pm r_2\} \}
\sqcup \{\varepsilon_j-\varepsilon_i\ |\ 0<i<j\ \hbox{and}\ c_j-c_i = \pm1\} \nonumber\\
&\hspace{1.6in}
\sqcup \{\varepsilon_j+\varepsilon_i\ |\ 0<i<j\ \hbox{and}\ c_j+c_i = \pm1\},
\label{P(c)origdefn}
\end{align}
where $\{ \vep_1, \dots, \vep_n\}$ is an orthonormal basis for the weights corresponding to $\fgl_n$ (see \cite[\S 3]{DR}). 
A \emph{local region} is a pair $(\cc, J)$ with $\cc\in \CC^k$ and $J\subseteq P(\cc)$.
The set of \emph{standard tableaux} of shape $(\cc,J)$ is 
\begin{equation}
\cF^{(\cc,J)} = \{ w\in \cW_0\ |\ R(w)\cap Z(\cc) = \emptyset,\ R(w)\cap P(\cc) = J\}
\label{stdtaborigdefn}
\end{equation}
(see the following section for a visualization of this set as fillings of box arrangements).
A \emph{skew local region} is a local region $(\cc, J)$, $\cc= (c_1,\ldots, c_k)$, such that \hfil\break
\indent if $w \in \cF^{(\cc,J)}$ then $w\cc = ((w\cc)_1, \ldots, (w\cc)_n)$ satisfies
\begin{equation}
\begin{array}{c}
(w\cc)_1\ne 0, \quad (w\cc)_2\ne 0, \quad (w\cc)_1\ne -(w\cc)_2, \\
\\
(w\cc)_i\ne (w\cc)_{i+1}\ \hbox{for $i=1,\ldots, k-1$,}\quad\hbox{and}\quad
(w\cc)_i\ne (w\cc)_{i+2}\ \hbox{for $i=1,\ldots, k-2$.}
\end{array}
\label{skewlocalregiondefn}
\end{equation}
The following theorem constructs and classifies the calibrated irreducible representations of $H_k^{\mathrm{ext}}$.

\begin{thm}\label{thm:calibconst} \cite[Theorem 3.3]{DR}
Assume $t^{\frac12}$, $t_0^{\frac12}$, and  $t_k^{\frac12}$ are invertible, 
$t^{\frac12}$ is not a root of unity, and
$$
t_0^{\frac12}t_k^{\frac12},-t_0^{-\frac12}t_k^{\frac12}\not\in \{1, -1, t^{\pm\frac12}, -t^{\pm\frac12}, t^{\pm1}, -t^{\pm1}\}
\quad \hbox{and}\quad
t_0^{\frac12}t_k^{\frac12}\ne (-t_0^{-\frac12}t_k^{\frac12})^{\pm1}.
$$
Let $r_1, r_2$ be as in \eqref{eq:r1andr2}.
\begin{enumerate}[(a)]
\item Let  $(\cc,J)$ be a skew local region and let $z\in \CC^\times$. Define 
\begin{equation}
H_k^{(z,\cc,J)}  = \mathrm{span}_\CC \{ v_w \ |\  w \in \cF^{(\cc,J)}\},
\label{HgammaJ}
\end{equation}
so that the symbols $v_w$ are a labeled basis of the vector space $H_k^{(z,\cc,J)}$.  Let
$$\gamma_i = -t^{c_i}
\quad\hbox{for $i=1,2,\ldots, k$,} \qquad \text{and} \qquad
\gamma_0 = z\gamma_{w^{-1}(1)}^{-1}\cdots \gamma_{w^{-1}(k)}^{-1}.
$$
Then the following formulas make $H_k^{(z,\cc,J)}$ into an irreducible $H^\ext_k$-module:
\begin{align}
PW_1&\cdots W_k v_w = zv_w, \qquad P v_w = \gamma_0 v_w, \qquad
W_i v_w = \gamma_{w^{-1}(i)} v_w, \label{snormActionPW} \\
T_i v_w &= [T_i]_{ww} v_w + \sqrt{-([T_i]_{ww}-t^{\frac12})([T_i]_{ww}+t^{-\frac12})}\  v_{s_iw},
\quad\hbox{for $i=1, \dots,k-1$}, \label{snormActionT}\\
T_0v_w &= [T_0]_{ww}v_w+
			 \sqrt{-([T_0]_{ww}-t_0^{\frac12})([T_0]_{ww}+t_0^{-\frac12})}\  v_{s_0w},
\label{snormActionX}
\end{align}
where $v_{s_iw} = 0$ if $s_iw\not\in \cF^{(\cc,J)}$, and
\begin{equation}
[T_i]_{ww} = \frac{t^{\frac12}-t^{-\frac12}}{1-\gamma_{w^{-1}(i)}\gamma^{-1}_{w^{-1}(i+1)}} 
\quad \text{ and } \quad 
[T_0]_{ww} = \frac{ (t_0^{\frac12}-t_0^{-\frac12}) + (t_k^{\frac12} -t_k^{-\frac12})\gamma^{-1}_{w^{-1}(1)}} 
{1-\gamma^{-2}_{w^{-1}(1)}}.
\end{equation}

\item  
The map
$$
\begin{matrix}
\CC^\times\times\{\hbox{skew local regions $(\cc, J)$}\}
&\longleftrightarrow 
&\{\hbox{irreducible calibrated $H_k^\ext$-modules}\} \\
(z,\cc, J) &\longmapsto &H_k^{(z,\cc, J)}
\end{matrix}
$$
\end{enumerate}
is a bijection.
\end{thm}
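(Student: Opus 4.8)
The plan is to establish part (a) (construction and irreducibility) and part (b) (the classifying bijection) by adapting the standard ``seminormal'' construction of calibrated representations of affine Hecke algebras to the unequal-parameter type-$C$ setting, using the Bernstein presentation of Theorem \ref{thm:2bdryHeckePresentation} and the intertwiners $\tau_i$ of \eqref{intertwinerdefs}. For the construction in part (a), I would first check that the formulas \eqref{snormActionPW}, \eqref{snormActionT}, \eqref{snormActionX} respect every defining relation. The commutations \eqref{WcommuteB2}, \eqref{T0WcommuteB3}, \eqref{TiWcommuteB4} and the centrality of $W_0$ are immediate from the diagonal action of the $W_i$ together with the support convention $v_{s_iw}=0$ when $s_iw\notin\cF^{(\cc,J)}$. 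The quadratic relations \eqref{quadraticT0andTi} reduce to the scalar identity that the chosen square-root coefficient makes the $2\times2$ block of $T_i$ on $\{v_w,v_{s_iw}\}$ (and of $T_0$ on $\{v_w,v_{s_0w}\}$) have eigenvalues $t^{\pm\frac12}$ (resp.\ $t_0^{\frac12},-t_0^{-\frac12}$); this is exactly how the coefficient was manufactured. The genuinely computational relations are \eqref{PresBraidRelsB1}, \eqref{TipastWi} and \eqref{T0pastW1}, which I would verify by restricting to the rank-two and rank-three local subalgebras generated by $\{T_i,W_i,W_{i+1}\}$, by $\{T_0,W_1\}$, and by $\{T_i,T_{i+1}\}$, where only the few basis vectors $v_w,v_{s_iw},v_{s_{i+1}w},\dots$ in a single $\langle s_i,s_{i+1}\rangle$-orbit are involved and the matrix entries are precisely the principal-series coefficients $[T_i]_{ww}$, $[T_0]_{ww}$.

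Irreducibility of $H_k^{(z,\cc,J)}$ I would then deduce from two facts. First, distinct $w\in\cF^{(\cc,J)}$ yield distinct joint $(W_1,\dots,W_k)$-eigenvalue tuples $(\gamma_{w^{-1}(1)},\dots,\gamma_{w^{-1}(k)})$, so any $H^{\mathrm{ext}}_k$-submodule is the span of a subset of the $v_w$. Second, it suffices that $\cF^{(\cc,J)}$ be connected under the moves $w\mapsto s_iw$ (and $w\mapsto s_0 w$) that remain in $\cF^{(\cc,J)}$ and have nonzero off-diagonal coefficient; the skew condition \eqref{skewlocalregiondefn} is exactly what forces these square roots to be nonzero whenever $s_iw$ is admissible, so that the relevant $\tau$-intertwiner is an isomorphism between weight lines. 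Connectivity is a purely combinatorial statement about the standard fillings of the box configuration of $(\cc,J)$: any two are joined by a chain of legal adjacent (signed) transpositions, each staying inside $\cF^{(\cc,J)}$.

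For part (b), injectivity is straightforward: the scalar $z$ by which $PW_1\cdots W_k$ acts and the set of weight tuples $\{\,w\cc\mid w\in\cF^{(\cc,J)}\,\}$ are isomorphism invariants, and from this data one recovers $\cc$ as a $\cW_0$-orbit representative and then $J$ as the set of $P(\cc)$-walls actually separating weights, so $(z,\cc,J)$ is determined. Surjectivity is the substance of the theorem. Given an irreducible calibrated $M$, I would decompose $M=\bigoplus_\gamma M_\gamma$ into joint $W$-eigenspaces and study the intertwiners $\tau_0,\tau_1,\dots,\tau_{k-1}$ via their commutations $W_i\tau_i=\tau_iW_{i+1}$, $W_1\tau_0=\tau_0W_1^{-1}$ and the explicit squares $\tau_i^2$, $\tau_0^2$ recorded just after \eqref{intertwinerdefs}. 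Each $\tau_i$ maps $M_\gamma\to M_{s_i\gamma}$, and it is an isomorphism precisely when the numerator $f$-factors of $\tau_i^2$ are nonzero, i.e.\ away from the walls recorded by $Z(\cc)$ and $P(\cc)$; the genericity hypotheses on $t^{\frac12},t_0^{\frac12},t_k^{\frac12}$ guarantee these scalars are nonzero off the walls. The vanishing of a denominator $f$-factor would place a weight in $Z(\cc)$ and, by an argument ruling out the resulting degeneracy in a finite-dimensional irreducible, is impossible; this is exactly what yields the skew condition and forbids the $Z(\cc)$ weights while leaving the $P(\cc)$ walls optional (recorded by $J$).

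A local-rank analysis then finishes surjectivity: restricting to the rank-one boundary subalgebra $\langle T_0,W_1\rangle$ and to the rank-two type-$A$ subalgebras $\langle T_i,W_i,W_{i+1}\rangle$, one shows every weight space $M_\gamma$ is one-dimensional and that the set of weights occurring in $M$ is a single $\cF^{(\cc,J)}$, whence $M\cong H_k^{(z,\cc,J)}$ by matching the $\tau$-action. I expect this surjectivity step to be the main obstacle: proving one-dimensionality of the weight spaces and showing that the weights appearing are governed \emph{exactly} by $Z(\cc)$ (forbidden) and $P(\cc)$ (optional) is where the detailed unequal-parameter type-$C$ combinatorics of \cite{DR} is indispensable, since the boundary intertwiner $\tau_0$ mixes $W_1$ with $W_1^{-1}$ and introduces the parameters $r_1,r_2$ of \eqref{eq:r1andr2} that have no analogue in the type-$A$ theory.
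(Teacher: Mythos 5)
Your proposal matches the situation exactly: this paper does not prove Theorem \ref{thm:calibconst} at all but imports it verbatim from \cite[Theorem 3.3]{DR}, and the proof given there proceeds by precisely your route --- direct verification of the Bernstein-presentation relations in local rank-one and rank-two subalgebras, irreducibility via the joint $W$-spectrum (with $R(w)\cap Z(\cc)=\emptyset$ handling repeated entries of $\cc$) together with $\tau$-intertwiner connectivity of $\cF^{(\cc,J)}$, and the classification via one-dimensionality of weight spaces and the $Z(\cc)$/$P(\cc)$ wall analysis. You have also correctly located the hard part (the surjectivity/weight-space step, where the boundary intertwiner $\tau_0$ and the parameters $r_1,r_2$ enter), so nothing further is needed beyond executing the computations as in \cite{DR}.
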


\subsection{Configurations of boxes}\label{configurationsofboxes}

Let $(\cc, J)$ be a local region with $\cc=(c_1,\ldots, c_k)$,
\begin{equation}
\cc\in \ZZ^k \quad\hbox{or}\quad \cc\in (\ZZ+\hbox{$\frac12$})^k,
\qquad \hbox{and}\qquad 0\le c_1\le \cdots\le c_k.
\label{boxconfigcases}
\end{equation}
Start with an infinite arrangement of NW to SE diagonals, numbered consecutively from $\ZZ$ or $\ZZ + \half$, 
increasing southwest to northeast (see Example \ref{ex:smallconfiguration}).
The \emph{configuration} $\kappa$ of boxes corresponding to the local region $(\cc,J)$ 
has $2k$ boxes 
(labeled $\mathrm{box}_{-k}, \ldots, \mathrm{box}_{-1}, \mathrm{box}_1, \ldots, \mathrm{box}_k$) 
with the following conditions.
\begin{enumerate}[\quad($\kappa$1)]
\item Location: $\mathrm{box}_i$ is on  diagonal $c_i$, where $c_{-i} = -c_{i}$ for $i\in \{-k, \ldots, -1\}$. 
\item Same diagonals: $\mathrm{box}_i$ is NW of $\mathrm{box}_j$ if
$i<j$ and $\mathrm{box}_i$ and $\mathrm{box}_j$ are on the same diagonal. 
\item Adjacent diagonals: \\
		{\quad} If $\varepsilon_j-\varepsilon_i\in J$, then $\mathrm{box}_j$ is NW (strictly north and weakly west) of
			 $\mathrm{box}_i$:
			\quad $	\TikZ{[xscale=.4, yscale=-.4]
						\Part{1,1}
						\node at (.5,.5) {\tiny$j$}; \node at (.5,1.5) {\tiny$i$}; 
			}$\\
		If $\varepsilon_j-\varepsilon_i\in P(\cc)-J$, then $\mathrm{box}_j$ is SE (weakly south and strictly east) of 
			$\mathrm{box}_i$:
			\quad $	\TikZ{[xscale=.4, yscale=-.4]
						\Part{2}
						\node at (.5,.5) {\tiny$i$}; \node at (1.5,.5) {\tiny$j$}; 
					}$
\item Markings: There is a marking on each of the diagonals $r_1$, $-r_1$, $r_2$ and $-r_2$. \\
		If $\varepsilon_i\in J$, $\mathrm{box}_i$ is NW of the marking on diagonal $c_i$:  
				\quad $	\TikZ{[xscale=.4, yscale=-.4]
						\Part{1}
						\filldraw [red] (1,1) circle (4pt);
						\node at (.5,.5) {\tiny$i$};
					}$\\
		If $\varepsilon_i\in P(\cc) - J$, then $\mathrm{box}_i$ is SE of the marking in diagonal $c_i$ :
				\quad $	\TikZ{[xscale=.4, yscale=-.4]
					\Part{1}
					\filldraw [red] (0,0) circle (4pt);
					\node at (.5,.5) {\tiny$i$};
			}$
\end{enumerate}

\noindent 

\noindent
A \emph{standard filling} of the boxes of $\kappa$ is a bijective function $S\colon \kappa \to \{-k, \ldots, -1, 1, \ldots k\}$ 
such that
\begin{enumerate}[\quad(S1)]
\item \label{S1} Symmetry: $S(\mathrm{box}_{-i}) = -S(\mathrm{box}_i)$.
\item \label{S2} Same diagonals: \\
If $0<i<j$ and $\mathrm{box}_i$ and $\mathrm{box}_j$ are on the same diagonal then $S(\mathrm{box}_i) <S(\mathrm{box}_j)$. 
\item \label{S3} Adjacent diagonals: \\
If $0<i<j$, $\mathrm{box}_i$ and $\mathrm{box}_j$ are on adjacent diagonals, and $\mathrm{box}_j$ is NW of $\mathrm{box}_i$, then $S(\mathrm{box}_j) <S(\mathrm{box}_i)$. \\
If $0<i<j$, $\mathrm{box}_i$ and $\mathrm{box}_j$ are on adjacent diagonals, and $\mathrm{box}_j$ is SE of $\mathrm{box}_i$, then $S(\mathrm{box}_j) >S(\mathrm{box}_i)$. 
\item \label{S4} Markings: \\
	If $\mathrm{box}_i$ is on a marked diagonal and is SE of the marking, then $S(\mathrm{box}_i) >0$.\\
	If $\mathrm{box}_i$ is on a marked diagonal and is NW of the marking, then $S(\mathrm{box}_i) <0$.
\end{enumerate}
The \emph{identity filling} of a configuration $\kappa$ is the filling $F$ of the boxes of $\kappa$ given by
$F(\mathrm{box}_i) = i$,
for $i=-k,\ldots, -1, 1, \ldots, k$.
The identity filling of $\kappa$ is usually not a standard filling of $\kappa$
(see Example \ref{ex:smallconfiguration}).

\begin{example}\label{ex:smallconfiguration}
Let $k=4$, $r_1 = 1$, and $r_2 = 3$. Consider $\cc = (2,2,3)$. Then 
$$Z(\cc) = \{\vep_{1} -\vep_{2} \}  \qquad \text{ and } \qquad 
	P(\cc) = \left\{  \begin{matrix}
		\vep_{3},\ \vep_{3} - \vep_{1},\ \vep_{3} - \vep_{2}
		\end{matrix}
		\right\}.$$ 
The box configurations corresponding to $J = \{ \vep_{3}-\vep_{2} \}$ and 
$J = \{ \vep_{3}, \vep_{3} - \vep_{1}, \vep_{3} - \vep_{2} \}$
(filled with their identity fillings) are
\medskip

\centerline{\begin{tabular}{ccc}
\begin{tikzpicture}[xscale=.4, yscale=-.4]
	\draw [step=1cm, very thin, black!20!white] (-3,-3) grid (3,3);
	\draw (-1,2)--(-1,0)--(-2,0)--(-2,2)--(0,2)--(0,1)--(-2,1);
	\draw (1,-2)--(1,0)--(2,0)--(2,-2)--(0,-2)--(0,-1)--(2,-1);
	\filldraw [red] (1,-2) circle (4pt);	\filldraw [red] (-1,2) circle (4pt);
	\draw[dashed]
		(0,-2)--(-1,-3)	(1,-2)--(0,-3)
		(-2,0)--(-3,-1)	(-2,1)--(-3,0);
	\foreach \x in {0, 1, ..., 5} { \draw (\x-3, -3) to +(-.75,-.75) node[fill=white, inner sep=1.5pt]{\small $\x$};}
	\foreach \x in { 1, ..., 5} { \draw (-3, \x-3) to +(-.75,-.75) node[fill=white, inner sep=1.5pt]{\small -$\x$};}
	\node at (.5,-1.5) {\scriptsize $1$}; 	\node at (-.5,1.5) {\scriptsize -$1$}; 
	\node at (1.5,-.5) {\scriptsize $2$}; 	\node at (-1.5,.5) {\scriptsize -$2$}; 
	\node at (1.5,-1.5) {\scriptsize $3$}; 	\node at (-1.5,1.5) {\scriptsize -$3$}; 
\end{tikzpicture}&&
\begin{tikzpicture}[xscale=.4, yscale=-.4]
	\draw [step=1cm, very thin, black!20!white] (-3,-3) grid (3,3);
	\draw (1,-3)--(1,0)--(2,0)--(2,-1)--(0,-1)--(0,-3)--(1,-3)	(0,-2)--(1,-2);
	\draw (-1,3)--(-1,0)--(-2,0)--(-2,1)--(0,1)--(0,3)--(-1,3)	(0,2)--(-1,2);
	\filldraw [red] (1,-2) circle (4pt);	\filldraw [red] (-1,2) circle (4pt);
	\draw[dashed]	(0,-2)--(-1,-3)	(-2,0)--(-3,-1);
	\foreach \x in {0, 1, ..., 5} { \draw (\x-3, -3) to +(-.75,-.75) node[fill=white, inner sep=1.5pt]{\small $\x$};}
	\foreach \x in { 1, ..., 5} { \draw (-3, \x-3) to +(-.75,-.75) node[fill=white, inner sep=1.5pt]{\small -$\x$};}
	\node at (.5,-1.5) {\scriptsize $1$}; 	\node at (-.5,1.5) {\scriptsize -$1$}; 
	\node at (1.5,-.5) {\scriptsize $2$}; 	\node at (-1.5,.5) {\scriptsize -$2$}; 
	\node at (.5,-2.5) {\scriptsize $3$}; 	\node at (-.5,2.5) {\scriptsize -$3$}; 
\end{tikzpicture} \\ 
$\displaystyle J = \left\{ \vep_{3}-\vep_{2} \right\}$ && $\displaystyle J = \left\{ \vep_{3}, \vep_{3} - \vep_{1}, \vep_{3} - \vep_{2}\right\}$
\end{tabular}}

\noindent
For both configurations, the identity filling is not a standard filling. Examples of standard fillings of the configuration corresponding to $J = \{\vep_2 - \vep_3\}$ include
$$\TikZ{[xscale=.3, yscale=-.3]
	\BOX{0,-2}{$1$} \BOX{1,-2}{$2$} \BOX{1,-1}{$3$}
	\BOX{-1,1}{-$1$} \BOX{-2,1}{-$2$} \BOX{-2,0}{-$3$}
	\filldraw [red] (1,-2) circle (4pt);	\filldraw [red] (-1,2) circle (4pt);
}, \qquad 
\TikZ{[xscale=.3, yscale=-.3]
	\BOX{0,-2}{-$1$} \BOX{1,-2}{$2$} \BOX{1,-1}{$3$}
	\BOX{-1,1}{$1$} \BOX{-2,1}{-$2$} \BOX{-2,0}{-$3$}
	\filldraw [red] (1,-2) circle (4pt);	\filldraw [red] (-1,2) circle (4pt);
}, \qquad \text{ and } \qquad 
\TikZ{[xscale=.3, yscale=-.3]
	\BOX{0,-2}{-$2$} \BOX{1,-2}{$1$} \BOX{1,-1}{$3$}
	\BOX{-1,1}{$2$} \BOX{-2,1}{-$1$} \BOX{-2,0}{-$3$}
	\filldraw [red] (1,-2) circle (4pt);	\filldraw [red] (-1,2) circle (4pt);
}, \qquad \text{ but not } \qquad 
\TikZ{[xscale=.3, yscale=-.3]
	\BOX{0,-2}{-$3$} \BOX{1,-2}{-$2$} \BOX{1,-1}{$1$}
	\BOX{-1,1}{$3$} \BOX{-2,1}{$2$} \BOX{-2,0}{-$1$}
	\filldraw [red] (1,-2) circle (4pt);	\filldraw [red] (-1,2) circle (4pt);
}.$$
\end{example}

\begin{prop} \label{prop:stdtabbijection} \cite[Proposition 3.1]{DR}
Let $\kappa$ be a configuration of boxes corresponding to a local region $(\cc, J)$ with $\cc\in \ZZ^k$ or $\cc\in (\ZZ+\frac12)^k$.
For $w\in \cW_0$ let $S_w$ be the filling of 
the boxes of $\kappa$ given by 
$$S_w(\mathrm{box}_i) = w(i),
\quad\hbox{for $i=-k, \ldots, -1, 1, \ldots, k$.}
$$
The map 
$$\begin{matrix}
\cF^{(\cc,J)} &\longrightarrow &\{ \hbox{standard fillings $S$ of the boxes of $\kappa$}\} \\
w &\longmapsto &S_w
\end{matrix}
\qquad\hbox{is a bijection}.
$$
\end{prop}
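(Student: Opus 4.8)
The plan is to realize the map $w\mapsto S_w$ as the restriction of the obvious bijection between $\cW_0$ and all symmetric bijective fillings of $\kappa$, and then to check that the two defining constraints cut out matching subsets on the two sides. First I would note that $S_w(\mathrm{box}_i)=w(i)$ sets up a bijection between $\cW_0$ and the set of bijections $S\colon\kappa\to\{-k,\dots,-1,1,\dots,k\}$ satisfying the symmetry condition (S\ref{S1}): indeed (S\ref{S1}) is literally the relation $w(-i)=-w(i)$ that characterizes elements of $\cW_0$, and a bijective $S$ satisfying it recovers a unique $w\in\cW_0$ via $w(i)=S(\mathrm{box}_i)$. This gives injectivity for free and reduces the whole statement to showing that, under this correspondence, $w$ lies in $\cF^{(\cc,J)}$ exactly when $S_w$ satisfies (S\ref{S2}), (S\ref{S3}), and (S\ref{S4}).

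The heart of the argument is a dictionary between the inversion set $R(w)$ and inequalities among the values $w(\pm i)$. For the three families of positive roots I would record
\[
\varepsilon_i\in R(w)\iff w(i)<0,\qquad \varepsilon_j-\varepsilon_i\in R(w)\iff w(i)>w(j),\qquad \varepsilon_j+\varepsilon_i\in R(w)\iff w(i)>-w(j),
\]
for $0<i<j$; these are the standard descriptions of inversions for the signed-permutation group $\cW_0$, with the sum root $\varepsilon_j+\varepsilon_i$ treated as $\varepsilon_j-\varepsilon_{-i}$ so that it compares $\mathrm{box}_j$ with the reflected $\mathrm{box}_{-i}$, and the single root $\varepsilon_i$ governing the sign of $w(i)$. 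I would verify each equivalence directly from $R(w)=\{\alpha>0 : w\alpha<0\}$ together with $w(-i)=-w(i)$.

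With the dictionary in hand, the placement rules $(\kappa 1)$–$(\kappa 4)$ become precisely a geometric encoding of which root lies in $Z(\cc)$, in $J$, or in $P(\cc)-J$: two boxes share a diagonal exactly when the associated difference or sum root lies in $Z(\cc)$; they sit on adjacent diagonals with the prescribed NW/SE relation exactly when the root lies in $J$ (NW) or in $P(\cc)-J$ (SE); and $\mathrm{box}_i$ sits NW/SE of a marking exactly when $\varepsilon_i\in J$ or $\varepsilon_i\in P(\cc)-J$. The well-definedness check is then routine: for $w\in\cF^{(\cc,J)}$, the condition $R(w)\cap Z(\cc)=\emptyset$ forces the same-diagonal inequalities (S\ref{S2}) through the dictionary (including the single-root and sum-root cases, handled via $\mathrm{box}_{-i}$ and the diagonal-$0$ boxes), while $R(w)\cap P(\cc)=J$ forces the adjacent-diagonal inequalities (S\ref{S3}) and the marking inequalities (S\ref{S4}). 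Conversely, given a standard filling $S$, setting $w(i)=S(\mathrm{box}_i)$ produces $w\in\cW_0$ by (S\ref{S1}), and (S\ref{S2})–(S\ref{S4}) translate back through the dictionary into exactly $R(w)\cap Z(\cc)=\emptyset$ and $R(w)\cap P(\cc)=J$, so $w\in\cF^{(\cc,J)}$; this yields surjectivity and completes the bijection.

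The step I expect to be the main obstacle is the careful bookkeeping in the dictionary for the sum roots $\varepsilon_j+\varepsilon_i$ and the single roots $\varepsilon_i$. These relate a positive-index box to its reflected negative-index box (or to the diagonal-$0$ or a marked diagonal), so one must track the sign reversal $w(-i)=-w(i)$ and confirm that the NW/SE and marking conventions in $(\kappa 3)$–$(\kappa 4)$ and (S\ref{S3})–(S\ref{S4}) line up with the correct direction of the inequality in every case. Getting all of these orientations mutually consistent — rather than any deep structural input — is where the genuine care lies; once the dictionary is pinned down, the matching of constraints on the two sides is a direct verification.
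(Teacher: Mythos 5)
The paper offers no proof of this proposition here — it is imported verbatim from \cite[Proposition 3.1]{DR} — and your architecture (identify $\cW_0$ with symmetric bijective fillings via (S1), so that injectivity is automatic, then translate $R(w)\cap Z(\cc)=\emptyset$ and $R(w)\cap P(\cc)=J$ into (S2)--(S4) through an inversion dictionary) is exactly the intended unwinding of the definitions, i.e.\ the same route as the cited proof.

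However, your dictionary contains a concrete sign error at precisely the spot you yourself flag as the delicate one. For $0<i<j$, the root $\varepsilon_j+\varepsilon_i$ lies in $R(w)=\{\alpha>0 \mid w\alpha<0\}$ iff $w(\varepsilon_j+\varepsilon_i)=\varepsilon_{w(j)}+\varepsilon_{w(i)}$ (with $\varepsilon_{-m}=-\varepsilon_m$) is a negative root, which happens iff $w(i)+w(j)<0$, i.e.\ $w(i)<-w(j)$ --- the opposite of your stated criterion $w(i)>-w(j)$. Note that your own reduction rule already gives the correct version: substituting $-i$ for $i$ in $\varepsilon_j-\varepsilon_i\in R(w)\iff w(i)>w(j)$ yields $\varepsilon_j-\varepsilon_{-i}\in R(w)\iff w(-i)>w(j)\iff -w(i)>w(j)$, so the formula you wrote is internally inconsistent with your own heuristic. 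The slip is not cosmetic as the proof stands: in every sum-root case the ``routine verification'' would match the geometry to the reversed inequality. For instance, if $\varepsilon_j+\varepsilon_i\in J$, so that $\mathrm{box}_j$ sits NW of $\mathrm{box}_{-i}$ on adjacent diagonals, your formula forces $S_w(\mathrm{box}_j)>S_w(\mathrm{box}_{-i})$, contradicting the NW-smaller convention of (S3); hence well-definedness and surjectivity would both fail whenever sum roots occur in $Z(\cc)\cup P(\cc)$ (which does happen, e.g.\ for half-integral $\cc$ with $c_i=c_j=\frac12$, giving $c_i+c_j=1$, or for configurations meeting the $0$ diagonal). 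Replace the sum-root entry by $\varepsilon_j+\varepsilon_i\in R(w)\iff w(i)+w(j)<0$ and carry that correction through the $\mathrm{box}_{-i}$ bookkeeping, and the rest of your verification goes through as described.
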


\subsection{Calibrated representations of $TL^{\mathrm{ext}}_k$}

The following theorem determines which calibrated irreducible representations of 
$H_k^{\mathrm{ext}}$ are $TL_k^{\mathrm{ext}}$-modules.  In Theorem 
\ref{TLconfigurations} the answer is stated in terms of the configuration of boxes
$\kappa$.  By ($\kappa$1)--($\kappa$4) of Section \ref{configurationsofboxes}
the local region $(\cc,J)$ is determined by $\kappa$.  See Theorem \ref{thm:modules-in-tensor-space}
for the explicit conversion from $\kappa$ to $(\cc, J)$ for the irreducible calibrated
$TL_k$-modules.

\begin{thm} \label{TLconfigurations}Assume that if $r_1, r_2 \in \ZZ$ or $r_1, r_2 \in \ZZ+ \half$, then $r_2> r_1+1$. 
Let $\kappa$ be the configuration of boxes corresponding to a skew 
local region $(\cc, J)$ with $\cc\in \ZZ^k$ or $\cc\in (\ZZ+\frac12)^k$.
The irreducible calibrated $H_k^{\mathrm{ext}}$-module $H_k^{(z,\cc, J)}$ is a $TL_k^{\mathrm{ext}}$-module if and only if 
$\kappa$ is a $180^\circ$ rotationally symmetric skew shape with two rows of $k$ boxes each
(with or without markings),
\begin{equation}\label{eq:TL-shapes}
\TikZ{[xscale=.4,yscale=-.4]
	\draw (0,1) rectangle (5,2);\draw (7,0) rectangle (12,1);}
\qquad \text{or} \qquad
\TikZ{[xscale=.4,yscale=-.4]
	\draw (0,0) rectangle (5,1) (-2,1) rectangle (3,2);}\ .
\end{equation}
\end{thm}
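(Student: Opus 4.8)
The plan is to realize $TL_k^{\mathrm{ext}}$ as the idempotent quotient and then convert the membership question into a combinatorial question about the configuration $\kappa$, using the explicit weight-basis action of Theorem \ref{thm:calibconst}. First I would reformulate the condition. By the definition of $TL_k^{\mathrm{ext}}$ and Theorem \ref{idempquotient}, the module $H_k^{(z,\cc,J)}$ factors through $TL_k^{\mathrm{ext}}$ if and only if the relations $p_i^{(1^3)}=0$, $p_0^{(\emptyset,1^2)}=p_0^{(1^2,\emptyset)}$ and $p_{0^\vee}^{(\emptyset,1^2)}=p_{0^\vee}^{(1^2,\emptyset)}$ hold as operators. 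Now $p_0^{(\emptyset,1^2)}$ and $p_0^{(1^2,\emptyset)}$ are \emph{orthogonal} idempotents, since by \eqref{piconds} they project onto the distinct $T_0$-eigenvalues $-t_0^{-\frac12}$ and $t_0^{\frac12}$; hence their images on $H_k^{(z,\cc,J)}$ coincide if and only if both vanish (if $p=q$ with $pq=0$ then $p=pq=0$), and likewise at the $0^\vee$-boundary using \eqref{p0veeconds}. Thus the membership condition is equivalent to the vanishing of all five idempotents, or equivalently, via the identities $Np_i^{(1^3)}=F_i$ and $F_0=N_0(p_0^{(1^2,\emptyset)}-p_0^{(\emptyset,1^2)})$ from the proof of Theorem \ref{idempquotient}, to $F_i=F_0=F_k=0$.

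Next I would translate each vanishing into the language of standard fillings through Proposition \ref{prop:stdtabbijection}. Each idempotent lies in a parabolic subalgebra ($HS_3=\langle T_i,T_{i+1}\rangle$, $HB_2=\langle T_0,T_1\rangle$, or $HB_2^\vee=\langle T_{0^\vee},T_1\rangle$), and the restriction of a calibrated module to such a subalgebra is again calibrated, decomposing according to the local arrangement of the boxes $\mathrm{box}_{\pm 1},\mathrm{box}_{\pm 2},\ldots$ and the markings. The idempotent $p_i^{(1^3)}$ projects onto the common $(-t^{-\frac12})$-eigenspace of $T_i$ and $T_{i+1}$, i.e. the sign ($(1^3)$) representation of $HS_3$; by the diagonal-entry formulas of Theorem \ref{thm:calibconst}(a) this eigenspace is nonzero exactly when some standard filling places the entries in positions $i,i+1,i+2$ on three consecutive diagonals, each strictly NW of the next, that is, in a single column of three boxes. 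Therefore $p_i^{(1^3)}=0$ for all $i$ if and only if $\kappa$ contains no column of length three, i.e. $\kappa$ has at most two rows. Combined with the built-in $180^\circ$-symmetry (S\ref{S1}) — which under rotation interchanges the two rows and so forces them to have equal length — and with $\Card(\kappa)=2k$, this already pins the type-$A$ part down to a rotationally symmetric two-row skew shape with $k$ boxes in each row.

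Finally I would treat the two boundary idempotents. Reading the eigenvalues off \eqref{piconds} and \eqref{p0veeconds}, the representations $(\emptyset,1^2)$ and $(1^2,\emptyset)$ both send $T_1\mapsto -t^{-\frac12}$, so they can appear only when $\mathrm{box}_1,\mathrm{box}_2$ form a vertical domino, and they are separated solely by the placement of that domino relative to the marking on the diagonal $c_1$ (condition $(\kappa 4)$). Using the explicit formula for $[T_0]_{ww}$ in Theorem \ref{thm:calibconst}(a), I would show that a vertical domino sitting on a marked diagonal is precisely what makes $p_0^{(\emptyset,1^2)}$ or $p_0^{(1^2,\emptyset)}$ nonzero, and that the skew-region inequalities \eqref{skewlocalregiondefn} together with the separation hypothesis $r_2>r_1+1$ eliminate the remaining degenerate boundary patterns (in particular the one-row degeneration and any domino straddling both markings). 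Feeding these boundary constraints into the two-row conclusion leaves exactly the two families displayed in \eqref{eq:TL-shapes}: the two rows either disjoint in the diagonal direction or overlapping, each with or without markings on the four diagonals $\pm r_1,\pm r_2$. I expect this last step to be the main obstacle, since it requires a careful rank-two ($B_2$ and $B_2^\vee$) analysis and an enumeration, forced by the two independent markings and the rotational symmetry, of all admissible marking positions against a two-row skew shape, verifying in each case that the boundary idempotents vanish precisely for the shapes in \eqref{eq:TL-shapes}.
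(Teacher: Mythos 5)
Your opening reduction is sound and matches the paper: since $p_0^{(\emptyset,1^2)}$ and $p_0^{(1^2,\emptyset)}$ are orthogonal idempotents (they cut out distinct one-dimensional representations of $HB_2$), the relation $p_0^{(\emptyset,1^2)}=p_0^{(1^2,\emptyset)}$ forces both to vanish in the quotient, which is exactly what the paper encodes by taking $P$ to be the set of all five idempotents. The genuine gap comes next, in your combinatorial translation. You claim the common $(-t^{-\frac12})$-eigenspace of $T_i,T_{i+1}$ is nonzero exactly when some standard filling places $i,i+1,i+2$ in a column of three boxes. This is false: the sign representation of $HS_3$ also occurs inside larger constituents of the restriction, not only as a one-dimensional orbit. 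If $w$ is a filling in which the diagonals $(w\cc)_i,(w\cc)_{i+1},(w\cc)_{i+2}$ are pairwise far apart, the span of the $\langle s_i,s_{i+1}\rangle$-orbit of $v_w$ is the six-dimensional regular representation of $HS_3$, which contains the sign representation; so $p_i^{(1^3)}$ acts nonzero even though no column of three exists anywhere in $\kappa$. The same happens when $i,i+1$ form a vertical domino and $i+2$ sits on a distant diagonal (the three-dimensional module induced from sign contains sign). Concretely, a configuration of $2k$ pairwise nonadjacent boxes passes your test but is not a $TL_k^{\mathrm{ext}}$-module. The correct pointwise criterion, which the paper extracts from the $\tau$-expansions of Proposition \ref{idempexpansion}, is that $p_i^{(1^3)}v_w=0$ if and only if some pair among $(w\cc)_i,(w\cc)_{i+1},(w\cc)_{i+2}$ differs by exactly $1$ in the right direction (conditions \eqref{icondition}), and this must hold for \emph{every} $w\in\cF^{(\cc,J)}$ --- a much stronger constraint than the absence of a column of three. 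The identical error infects your boundary analysis: the representations $(\emptyset,1^2)$ and $(1^2,\emptyset)$ of $HB_2$ and $HB_2^\vee$ appear inside free rank-two orbits as well, not only when $\mathrm{box}_1,\mathrm{box}_2$ form a vertical domino, so the conditions \eqref{first0condition}--\eqref{last0condition} must likewise be verified for all fillings.

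Even after repairing the criterion, your route from ``no column of three, rotationally symmetric, $2k$ boxes'' to the two exact families in \eqref{eq:TL-shapes} is not forced: at-most-two-rows together with the $180^\circ$ symmetry does not by itself rule out disconnected or badly offset configurations that are not two full rows of $k$ boxes. The paper closes exactly this gap with an induction on $k$: the base case $k=2$ is an exhaustive check of the rank-two local regions (Figure \ref{fig:rank2regulars}) against \eqref{first0condition}--\eqref{last0condition}; the inductive step restricts to $TL_{k-1}^{\mathrm{ext}}$, analyzes the admissible positions of $\mathrm{box}_k$ and $\mathrm{box}_{-k}$, eliminates the bad ones by exhibiting a standard filling with $k-2,k-1,k$ violating \eqref{icondition}, and discards the fully stacked $2\times k$ rectangle because it violates the skew-local-region condition \eqref{skewlocalregiondefn} (namely $(w\cc)_{k-2}\neq(w\cc)_k$), so that module is not even calibrated. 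Your proposal acknowledges the boundary enumeration as ``the main obstacle'' but offers no substitute for this induction, and as written the argument would misclassify configurations on both sides of the equivalence.
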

\begin{proof} Let
$P = \{ p_{0}^{(\emptyset, 1^2)}, p_{0}^{(1^2, \emptyset)}, p_{0^\vee}^{(\emptyset, 1^2)},
p_{0^\vee}^{(1^2, \emptyset)},
p_1^{(1^3)}, p_2^{(1^3)}, \ldots, p_{k-2}^{(1^3)} \}
$
so that $TL_k$ is the quotient of $H_k$ by the ideal generated by the set $P$.
For $w\in \cF^{(\cc,J)}$ let $S_w$ be the standard tableau of shape $\kappa$ corresponding to $w$ 
as given in Proposition \ref{prop:stdtabbijection}.
For $j\in \{-k, \ldots, -1, 1, \ldots, k\}$,
$$\hbox{$(w\cc)_j$ is the diagonal number of $S_w(j)$,}
$$
where $S_w(j)$ is the box containing $j$ in $S_w$.

\smallskip

\noindent\emph{Step 1: Rewriting of the conditions for $pv_w=0$.}
By the construction of $H_k^{(z, \cc,J)}$ in Theorem \ref{thm:calibconst},  the module
$H_k^{(z, \cc,J)}$ has basis $\{v_w\ |\ w\in \cF^{(\cc,J)} \}$
and, if $w\in \cF^{(\cc,J)}$ then
\begin{align*}
\tau_i v_w = 0 \quad &\hbox{if and only if}\quad (w\cc)_{i+1}=(w\cc)_i\pm 1, \\
f_{\varepsilon_i-r_2}v_w = 0
\quad
&\hbox{if and only if}\quad
(w\cc)_i=r_2,  \quad \text{and}\\
f_{\vep_i-\vep_j+1}v_w = 0\quad
&\hbox{if and only if}\quad
(w\cc)_i = (w\cc)_j-1.
\end{align*}
Let $i\in \{1, \ldots, k-2\}$. Using the expansion of $p_i^{(1^3)}$ in terms of the $\tau_i$ from Proposition \ref{idempexpansion}, 
\begin{align*}
p_i^{(1^3)} v_w 
&=\tau_i\tau_{i+1}\tau_i v_w
-  t^{-\frac12} \tau_{i+1}\tau_i \frac{f_{\vep_{i+1}-\vep_{i+2}+1}}{f_{\vep_{i+1}-\vep_{i+2}}} v_w
- t^{-\frac12} \tau_i\tau_{i+1} \frac{f_{\vep_{i+1}-\vep_i+1}}{f_{\vep_{i+1}-\vep_i}} v_w
\\
&\qquad
+ t^{-1} \tau_i 
\frac{f_{\vep_{i+1}-\vep_{i+2}+1} f_{\vep_{i+2}-\vep_i+1} } 
{f_{\vep_{i+1}-\vep_{i+2}} f_{\vep_{i+2}-\vep_i} } v_w
+ t^{-1} \tau_{i+1}  
\frac{f_{\vep_{i+2}-\vep_i+1} f_{\vep_{i+1}-\vep_i+1} } 
{f_{\vep_{i+2}-\vep_i} f_{\vep_{i+1}-\vep_i}}  v_w
\\
&\qquad
- t^{-\frac32} 
\frac{f_{\vep_{i+1}-\vep_{i+2}+1} f_{\vep_{i+2}-\vep_i+1} f_{\vep_{i+1}-\vep_i+1} } 
{f_{\vep_{i+1}-\vep_{i+2}} f_{\vep_{i+2}-\vep_i} f_{\vep_{i+1}-\vep_i+1} } v_w,
\end{align*}
we consider the condition $p_i^{(1^3)} v_w =0$ term-by-term. First, 
 $\tau_i\tau_{i+1}\tau_i v_w = 0$ exactly when $(wc)_{i+1} = (wc)_i\pm 1$ 
or $(s_iwc)_{i+2}=(s_iwc)_{i+1}\pm 1$ or $(s_{i+1}s_iw)_{i+1} = (s_{i+1}s_iw)_i=\pm1$, i.e.\ when
$$
(w\cc)_{i+1}=(w\cc)_i\pm 1 \quad\hbox{or}\quad
(w\cc)_{i+2} = (w\cc)_i\pm 1 \quad\hbox{or}\quad
(w\cc)_{i+2}=(w\cc)_{i+1}\pm 1.
$$
Next,  $- t^{-\frac32} 
\frac{f_{\vep_{i+1}-\vep_{i+2}+1} f_{\vep_{i+2}-\vep_i+1} f_{\vep_{i+1}-\vep_i+1} } 
{f_{\vep_{i+1}-\vep_{i+2}} f_{\vep_{i+2}-\vep_i} f_{\vep_{i+1}-\vep_i+1} } v_w=0$ 
exactly when
$$
(w\cc)_{i+1}=(w\cc)_i+1 \quad\hbox{or}\quad
(w\cc)_{i+2} = (w\cc)_i+ 1 \quad\hbox{or}\quad
(w\cc)_{i+1}=(w\cc)_{i+1}+ 1.
$$
Thus $- t^{-\frac32} 
\frac{f_{\vep_{i+1}-\vep_{i+2}+1} f_{\vep_{i+2}-\vep_i+1} f_{\vep_{i+1}-\vep_i+1} } 
{f_{\vep_{i+1}-\vep_{i+2}} f_{\vep_{i+2}-\vep_i} f_{\vep_{i+1}-\vep_i+1} } v_w=0$ 
already implies $\tau_i\tau_{i+1}\tau_i v_w=0$, and similarly for the other terms in the expansion
of $p_i^{(1^3)}v_w=0$.
Thus $p_i^{(1^3)}v_w=0$ if and only if
\begin{equation}
(w\cc)_i = (w\cc)_{i+1}-1\ \ \hbox{or}\ \ 
(w\cc)_i = (w\cc)_{i+2}-1\ \ \hbox{or}\ \ 
(w\cc)_{i+1} = (w\cc)_{i+2}-1.
\label{icondition}
\end{equation}
Similarly, $p_0^{(\emptyset, 1^2)}v_w=0$  if and only if
\begin{equation}
(w\cc)_1 \in\{ r_1, r_2\}\ \hbox{or}\ (w\cc)_2\in \{r_1, r_2\} \ \hbox{or}\ 
(w\cc)_2=(w\cc)_1+1 \ \hbox{or}\ (w\cc)_2=(w\cc)_{-1}+1;
\label{first0condition}
\end{equation}
$p_0^{(1^2,\emptyset)}v_w=0$ if and only if
\begin{equation}
(w\cc)_1 \in\{ -r_1, -r_2\}\ \hbox{or}\ (w\cc)_2\in \{-r_1, -r_2\}\ \hbox{or}\ 
(w\cc)_2=(w\cc)_1+1\ \hbox{or}\ (w\cc)_2=(w\cc)_{-1}+1;
\end{equation}
$p_{0^\vee}^{(\emptyset, 1^2)}v_w=0$  if and only if
\begin{equation}
(w\cc)_1 \in\{ -r_1, r_2\}\ \hbox{or}\ (w\cc)_2\in \{-r_1, r_2\}\ \hbox{or}\  
(w\cc)_2=(w\cc)_1+1\ \hbox{or}\ (w\cc)_2=(w\cc)_{-1}+1;
\end{equation}
and $p_{0^\vee}^{(1^2,\emptyset)}v_w=0$ if and only if
\begin{equation}
(w\cc)_1 \in\{ r_1, -r_2\}\ \hbox{or}\ (w\cc)_2\in \{r_1, -r_2\}\ \hbox{or}\  
(w\cc)_2=(w\cc)_1+1\ \hbox{or}\ (w\cc)_2=(w\cc)_{-1}+1.
\label{last0condition}
\end{equation}

\smallskip

\noindent\emph{Step 2: If $\kappa$ is as in \eqref{eq:TL-shapes} and 
$w\in \cF^{(\cc, J)}$ and $p\in P$ then $pv_w=0$.}
Assume $\kappa$ has the form given in \eqref{eq:TL-shapes}
and let $w\in \cF^{(\cc,J)}$.  
Since $\kappa$ has only two rows the positions of $(-2, -1, 1, 2)$ in $S_w$ take one of the following forms:
$$\begin{matrix}
\TikZ{[xscale=.5,yscale=-.5]
	\draw[densely dotted] (2,0) to +(-.5, -.5) 
			node[above left, inner sep=2pt] {\scriptsize$(w\cc)_{-1}$};
	\draw[densely dotted] (-1,2) to +(.5,.5)
			node[below right, inner sep=2pt] {\scriptsize$(w\cc)_{1}$};
	\BOX{2,0}{\scriptsize-$1$}\BOX{3,0}{\scriptsize$2$}
	\BOX{-3,1}{\scriptsize-$2$}\BOX{-2,1}{\scriptsize$1$}
	}
&\qquad
&\TikZ{[xscale=.5,yscale=-.5]
	\draw[densely dotted] (2,0) to +(-.5, -.5) 
			node[above left, inner sep=2pt] {\scriptsize$(w\cc)_1$};
	\draw[densely dotted] (-1,2) to +(.5,.5)
			node[below right, inner sep=2pt] {\scriptsize$(w\cc)_{-1}$};
	\BOX{2,0}{\scriptsize$1$}\BOX{3,0}{\scriptsize$2$}
	\BOX{-3,1}{\scriptsize-$2$}\BOX{-2,1}{\scriptsize-$1$}
	}
&\qquad
&\TikZ{[xscale=.5,yscale=-.5]
	\draw[densely dotted] (0,0) to +(-.5, -.5) 
			node[above left, inner sep=2pt] {\scriptsize-$\frac12$};
	\draw[densely dotted] (1,0) to +(-.5, -.5) 
			node[above left, inner sep=2pt] {\scriptsize$\frac12$};
	\BOX{0,0}{\scriptsize-$2$}\BOX{1,0}{\scriptsize-$1$}
	\BOX{1,1}{\scriptsize$1$}\BOX{2,1}{\scriptsize$2$}
	}
&\qquad
&\TikZ{[xscale=.5,yscale=-.5]
	\draw[densely dotted] (0,0) to +(-.5, -.5) 
			node[above left, inner sep=2pt] {\scriptsize$0$};
	\draw[densely dotted] (1,0) to +(-.5, -.5) 
			node[above left, inner sep=2pt] {\scriptsize$1$};
	\draw[densely dotted] (0,1) to +(-.5, -.5) 
			node[above left, inner sep=2pt] {\scriptsize-$1$};
	\BOX{0,0}{\scriptsize-$1$}\BOX{1,0}{\scriptsize$2$}
	\BOX{0,1}{\scriptsize-$2$}\BOX{1,1}{\scriptsize$1$}
	}
\\
(wc)_1 < -\frac12,
&&(wc)_1 > \frac12,
&&(wc)_1 = -\frac12,
&&(wc)_1 = 0.
\end{matrix}
$$
In each of these cases, the conditions in \eqref{first0condition}--\eqref{last0condition}
give that $p_{0}^{(\emptyset, 1^2)}v_w=0$, $p_{0}^{(1^2, \emptyset)}v_w=0$, $p_{0^\vee}^{(\emptyset, 1^2)}v_w=0$ and $p_{0^\vee}^{(1^2, \emptyset)}v_w=0$.
Next, let $i\in\{1, \ldots, k-2\}$.  Since $\kappa$ has only two rows, then either $i$ or $i+1$ are in the same row
$$\TikZ{[xscale=.6,yscale=-.6]
	\draw[densely dotted] (0,0) to +(-.4, -.4) 
			node[above left, inner sep=2pt] {\scriptsize$(w\cc)_i$};
	\draw[densely dotted] (2,1) to +(.4,.4)
			node[below right, inner sep=2pt] {\scriptsize$(w\cc)_{i\!+\!1}$};
	\BOX{0,0}{\scriptsize$i$}\BOX{1,0}{\scriptsize$i\!+\!1$}
	}
	\qquad\TikZ{[xscale=.6,yscale=-.6]
	\draw[densely dotted] (0,0) to +(-.4, -.4) 
			node[above left, inner sep=2pt] {\scriptsize$(w\cc)_i$};
	\draw[densely dotted] (2,1) to +(.4,.4)
			node[below right, inner sep=2pt] {\scriptsize$(w\cc)_{i+2}$};
	\BOX{0,0}{\scriptsize$i$}\BOX{1,0}{\scriptsize$i\!+\!2$}
	}
$$
or $i$ and $i+2$ are in the same row.  Thus, by \eqref{icondition}, $p_iv_w=0$.
This completes the proof that if $\kappa$ is of the form \eqref{eq:TL-shapes} then 
$H_k^{(z, \cc,J)}$ is a $TL_k^{\mathrm{ext}}$-module.

\smallskip

\noindent\emph{Step 3: If $\kappa$ is not as in \eqref{eq:TL-shapes} 
then there exists $w\in \cF^{(\cc,J)}$ and 
$p\in P$ such that $pv_w\ne0$.}  Let $2k$ be the number of boxes in $\kappa$.  The proof
is by induction on $k$.

First, if $k=2$, then the condition \eqref{icondition} does not apply.
If $\cc = (r_1, r_2)$ then there are $8$ possibilities for $w\cc$: $(r_1, r_2)$, $(-r_1, r_2)$,
$(r_1, - r_2)$, $(-r_1, -r_2)$, $(r_2, r_1)$, $(-r_2, r_1)$, $(r_2, -r_1)$ and $(-r_2, -r_1)$.  None 
of these satisfy all of the conditions \eqref{first0condition}--\eqref{last0condition}. 
If $\cc = (c_1, c_1+1)$, then $s_1\cc=(c_1+1, c_1)$ does not satisfy \eqref{first0condition}
and $s_0s_1s_0s_1\cc = (-c, -c-1)$ does not satisfy \eqref{last0condition}.  
Thus that only the shaded local regions in Figure \ref{fig:rank2regulars} can have $pv_w=0$ for all
$p\in P$ and all $w\in \cF^{(\cc,J)}$.  For all of these, $\kappa$ is as in \eqref{eq:TL-shapes}.

Next, assume $k>2$ and proceed inductively.  If $H_k^{(z, \cc,J)}$ is a calibrated $TL_k^{\mathrm{ext}}$-module
then $\Res^{TL_k^{\mathrm{ext}}}_{TL_{k-1}^{\mathrm{ext}}}(H_k^{(z, \cc,J)})$ is calibrated $TL_{k-1}^{\mathrm{ext}}$-module.  This means
that if $S_w$ is a standard tableau of shape $\kappa$ and $S_w'$ is $S_w$ except with
the boxes $S_w(k)$ and $S_w(-k)$ removed and $\kappa'$ is the shape of $S_w'$, then
$\kappa'$ must be as in \eqref{eq:TL-shapes} and have only two rows.  The box $S_w(k)$ is
in a SE corner of $\kappa$ and the box $S_w(-k)$ is in a NW corner of $\kappa$.
$$\TikZ{[xscale=.5,yscale=-.5]
	\filldraw[white, very thin, pattern=north west lines, pattern color=blue!50] 
		(5,1-0.05) rectangle (6,-1.29) (-9,4-.05) rectangle (-8,4.3);
	\filldraw[blue!30] (-9,2) to (-3,2) to (-3,1) to (5,1) to (5,-1) to (6,-1) 
				to (6,1) to (6,2) to (-2,2) to (-2,3) to (-8,3) to (-8,4) to (-9,4) to (-9,2);
	\draw[fill = white] (0,0) rectangle (5,1);\draw[fill = white] (-8,1) rectangle (-3,2);
	\draw[dotted] (0,0) to +(-.5,-.5) 
			node[above, inner sep=3pt] {\footnotesize$c$};
	\draw[dotted] (5,1) to +(1.5,1.5) 
			node[below, inner sep=3pt] {\footnotesize$c+k-2$};
	\draw[dotted] (-3,2) to +(1.5,1.5) 
			node[below, inner sep=3pt] {\footnotesize$-c$};
	\draw[ dotted] (-8,1) to +(-.5,-.5) 
			node[above, inner sep=3pt] {\footnotesize$-c-k+2$};
	}
\quad \text{or} \quad 
\TikZ{[xscale=.5,yscale=-.5]
	\filldraw[white, very thin, pattern=north west lines, pattern color=blue!50] 
		(6,1-0.05) rectangle (7,-1.29) (-4,3-0.05) rectangle (-3,3.3);
	\filldraw[blue!30] (-4,3) to (-4,2) to (3,2) to (3,1) to (6,1) to (6,-1)
				to (7,-1) to (7,1) to (7,2) to (4,2) to (4,3) to (-4,3);
	\draw (0,0) rectangle (6,1) (-3,1) rectangle (3,2);
	\draw[dotted] (0,0) to +(-.5,-.5) 
			node[above, inner sep=3pt, fill=white] {\footnotesize$c$};
	\draw[dotted] (6,1) to +(1.5,1.5) 
			node[below, inner sep=3pt] {\footnotesize$c+k-2$};
	\draw[dotted] (3,2) to +(1.5,1.5) 
			node[below, inner sep=3pt] {\footnotesize$-c$};
	\draw[dotted] (-3,1) to +(-.5,-.5) 
			node[above, inner sep=3pt] {\footnotesize$-c-k+2$};
	}\ .
$$

Given that $\kappa'$ has only two rows and $\kappa$ is obtained from $\kappa'$ by adding 
boxes that could contain $k$ and $-k$ in a standard tableau, the following are possibilities that we discard for $\kappa$:
$$
\TikZ{[xscale=.5,yscale=-.5]
	\draw (0,0) rectangle (10,1) (-7,1) rectangle (3,2);
	\BOX{10,-1}{\scriptsize$k$}\BOX{-8,2}{\scriptsize-$k$}
	\BOX{9,0}{\scriptsize$k$-$2$}\BOX{2,1}{\scriptsize$k$-$1$}
	}\ ,
$$
$$\phantom{\text{ or}}\quad 
\TikZ{[xscale=.5,yscale=-.5]
	\draw (0,0) rectangle (10,1) (-7,1) rectangle (3,2);
	\BOX{5,1}{\scriptsize$k$}\BOX{-3,0}{\scriptsize-$k$}
	\BOX{9,0}{\scriptsize$k$-$2$}\BOX{2,1}{\scriptsize$k$-$1$}
	}\ , \quad \text{ and} 
$$
$$
\TikZ{[xscale=.5,yscale=-.5]
	\draw (0,0) rectangle (10,1) (-7,1) rectangle (3,2);
	\BOX{-3,2}{\scriptsize$k$}\BOX{5,-1}{\scriptsize-$k$}
	\BOX{9,0}{\scriptsize$k$-$2$}\BOX{2,1}{\scriptsize$k$-$1$}
	} \ .
$$
Namely, in each case there is a standard tableaux that has $k-2$, $k-1$ and $k$ in positions that 
do not satisfy the conditions in \eqref{icondition}.  Thus, in these cases,
there exists an $S_w$ of shape $\kappa$ for which $p^{(1^3)}_{k-2} v_w \ne 0$.  Further, in the case
$$
\TikZ{[xscale=.5,yscale=-.5]
	\draw (0,0) rectangle (10,1) (0,1) rectangle (10,2);
	\BOX{10,1}{\scriptsize$k$}\BOX{-1,0}{\scriptsize-$k$}
	\BOX{9,0}{\scriptsize$k$-$2$}\BOX{9,1}{\scriptsize$k$-$1$}
	}\ ,
$$
the shape $\kappa$ does not satisfy the $(w\cc)_{k-2}\ne (w\cc)_{k}$
from \eqref{skewlocalregiondefn} and the module $H_k^{(z, \cc,J)}$ is not calibrated.
In summary, unless $\kappa$ is of the form given in \eqref{eq:TL-shapes}
$$
\TikZ{[xscale=.5,yscale=-.5]
	\draw (0,0) rectangle (10,1) (-7,1) rectangle (3,2);
	\BOX{10,0}{\scriptsize$k$}\BOX{-8,1}{\scriptsize-$k$}
	} 
$$
$$\text{or} \qquad
\TikZ{[xscale=.5,yscale=-.5]
	\draw (0,0) rectangle (10,1) (-7,1) rectangle (3,2);
	\BOX{3,1}{\scriptsize$k$}\BOX{-1,0}{\scriptsize-$k$}
	}\ ,
$$
then either $H_k^{(z, \cc,J)}$ is not calibrated or 
there exists an $S_w$ of shape $\kappa$ for which $p^{(1^3)}_{k-2} v_w \ne 0$.
\end{proof}

\pagebreak~

\begin{figure}[h]\caption{
Calibrated representations of $H_2$ have regular central character. For each $(\cc, J)$ the corresponding configuration of boxes $\kappa$ is displayed
in the local region of chambers corresponding to the elements of $\cF^{(\cc,J)}$; 
only the boxes on positive diagonals are shown, since they determine 
$\kappa$ when $\cc$ is regular.
The local regions marked in blue are those that factor through the Temperley-Lieb quotient. 
}\label{fig:rank2regulars}
{\def\RTwo{2.6}
\def\ROne{1.2}
\def\ONE{.5}
\def\TOP{3.75}
\centerline{\begin{tikzpicture}[scale=3.25, every node/.style={inner sep=3pt}]
	\filldraw[gray!15] (0,\TOP)--(0,0)--(\TOP,\TOP)--(0,\TOP);
	\node[WhiteCircle, inner sep=17.5pt](r1r2) at (\ROne,\RTwo){};
	\node[B, black] at (r1r2){};
		\node[above right] at (r1r2) {
			\begin{tikzpicture}[BoxArr]
			\BOX{0,0}{}\BOX{2,0}{}\bDOT{0,0}\bDOT{2,0}
			\end{tikzpicture}
			};		
		\node[above left] at (r1r2) {
			\begin{tikzpicture}[BoxArr]
				\BOX{0,0}{}\BOX{2,0}{}\bDOT{1,1}\bDOT{2,0}
			\end{tikzpicture}
			};
		\node[below right] at (r1r2) {
			\begin{tikzpicture}[BoxArr]
			\BOX{0,0}{}\BOX{2,0}{}\bDOT{0,0}\bDOT{3,1}
			\end{tikzpicture}
			};
		\node[below left] at (r1r2) {
			\begin{tikzpicture}[BoxArr]
				\BOX{0,0}{}\BOX{2,0}{}\bDOT{1,1}\bDOT{3,1}
			\end{tikzpicture}
			};
	\node[WhiteCircle, inner sep=17.5pt](r1-1r1) at (\ROne-\ONE,\ROne){};	
	\filldraw[blue!30] (\ROne-\ONE,\ROne) to +(-.189, -.189) arc (225:45:.267cm) to (\ROne-\ONE,\ROne);
	\node[B, blue!75!black] at (r1-1r1){};
		\node[above] at (r1-1r1) {
			\begin{tikzpicture}[BoxArr]
			\BOX{0,0}{}\BOX{1,0}{}\bDOT{1,0}
			\node at (3,1){};
			\end{tikzpicture}
			};	
		\node[above right, inner sep=0pt] at (r1-1r1) {
			\begin{tikzpicture}[BoxArr]
			\BOX{0,0}{}\BOX{0,1}{}\bDOT{0,0}
			\node at (-2.5,2){};
			\end{tikzpicture}
			};	
		\node[below right] at (r1-1r1) {
			\begin{tikzpicture}[BoxArr]
			\BOX{0,0}{}\BOX{0,1}{}\bDOT{1,1}
			\end{tikzpicture}
			};			
		\node[below left, inner sep=0pt] at (r1-1r1) {
			\begin{tikzpicture}[BoxArr]
			\BOX{0,0}{}\BOX{1,0}{}\bDOT{2,1}
			\node at (3.75,-.2){};
			\end{tikzpicture}
			};	
	\node[WhiteCircle, inner sep=17.5pt](r2-1r2) at (\RTwo-\ONE,\RTwo){};	
	\filldraw[blue!30] (\RTwo-\ONE,\RTwo) to +(-.189, -.189) arc (225:45:.267cm) to (\RTwo-\ONE,\RTwo);
	\node[B, blue!75!black] at (r2-1r2){};
		\node[above] at (r2-1r2) {
			\begin{tikzpicture}[BoxArr]
			\BOX{0,0}{}\BOX{1,0}{}\bDOT{1,0}
			\node at (3,1){};
			\end{tikzpicture}
			};	
		\node[above right, inner sep=0pt] at (r2-1r2) {
			\begin{tikzpicture}[BoxArr]
			\BOX{0,0}{}\BOX{0,1}{}\bDOT{0,0}
			\node at (-2.5,2){};
			\end{tikzpicture}
			};	
		\node[below right] at (r2-1r2) {
			\begin{tikzpicture}[BoxArr]
			\BOX{0,0}{}\BOX{0,1}{}\bDOT{1,1}
			\end{tikzpicture}
			};			
		\node[below left, inner sep=0pt] at (r2-1r2) {
			\begin{tikzpicture}[BoxArr]
			\BOX{0,0}{}\BOX{1,0}{}\bDOT{2,1}
			\node at (3.75,-.2){};
			\end{tikzpicture}
			};
	\node[WhiteCircle, inner sep=17.5pt](r1r1+1)  at (\ROne,\ROne+\ONE)  {};		
	\filldraw[blue!30] (\ROne,\ROne+\ONE)to +(-.189, -.189) arc (225:45:.267cm) to (\ROne,\ROne+\ONE);
	\node[B, blue!75!black] at (r1r1+1) {};
		\node[left] at (r1r1+1) {
			\begin{tikzpicture}[BoxArr]
			\BOX{0,0}{}\BOX{1,0}{}\bDOT{1,1}
			\node at (2.5,1){};
			\end{tikzpicture}
			};	
		\node[below left, inner sep=0pt] at (r1r1+1) {
			\begin{tikzpicture}[BoxArr]
			\BOX{0,0}{}\BOX{0,1}{}\bDOT{1,2}
			\node at (1.3,-1.75){};
			\end{tikzpicture}
			};	
		\node[below right] at (r1r1+1) {
			\begin{tikzpicture}[BoxArr]
			\BOX{0,0}{}\BOX{0,1}{}\bDOT{0,1}
			\end{tikzpicture}
			};			
		\node[above right, inner sep=0pt] at (r1r1+1) {
			\begin{tikzpicture}[BoxArr]
			\BOX{0,0}{}\BOX{1,0}{}\bDOT{0,0}
			\node at (-.25,3.5){};
			\end{tikzpicture}
			};	
	\node[WhiteCircle, inner sep=17.5pt](r2r2+1) at (\RTwo,\RTwo+\ONE) {};		
	\filldraw[blue!30] (\RTwo,\RTwo+\ONE) to +(-.189, -.189) arc (225:45:.267cm) to (\RTwo,\RTwo+\ONE);
	\node[B] at  (r2r2+1){};	
		\node[left] at (r2r2+1) {
			\begin{tikzpicture}[BoxArr]
			\BOX{0,0}{}\BOX{1,0}{}\bDOT{1,1}
			\node at (2.5,1){};
			\end{tikzpicture}
			};	
		\node[below left, inner sep=0pt] at (r2r2+1) {
			\begin{tikzpicture}[BoxArr]
			\BOX{0,0}{}\BOX{0,1}{}\bDOT{1,2}
			\node at (1.3,-1.75){};
			\end{tikzpicture}
			};	
		\node[below right] at (r2r2+1) {
			\begin{tikzpicture}[BoxArr]
			\BOX{0,0}{}\BOX{0,1}{}\bDOT{0,1}
			\end{tikzpicture}
			};			
		\node[above right, inner sep=0pt] at (r2r2+1) {
			\begin{tikzpicture}[BoxArr]
			\BOX{0,0}{}\BOX{1,0}{}\bDOT{0,0}
			\node at (-.25,3.5){};
			\end{tikzpicture}
			};	
	\node[WhiteCircle, inner sep=14pt](c1c1+1) at (.5*\ROne+.5*\RTwo - .5*\ONE,.5*\ROne+.5*\RTwo + .5*\ONE) {};	
	\filldraw[blue!30] (.5*\ROne+.5*\RTwo - .5*\ONE,.5*\ROne+.5*\RTwo + .5*\ONE) to +(-.151, -.151) arc (225:45:.214cm) to (.5*\ROne+.5*\RTwo - .5*\ONE,.5*\ROne+.5*\RTwo + .5*\ONE) ;
	\node[B, blue!75!black] at (c1c1+1){};
		\node[below right] at (c1c1+1) {
			\begin{tikzpicture}[BoxArr]
			\BOX{0,0}{}\BOX{0,1}{}
			\end{tikzpicture}
			};
		\node[above left] at (c1c1+1) {
			\begin{tikzpicture}[BoxArr]
			\BOX{0,0}{}\BOX{1,0}{}
			\end{tikzpicture}
			};
	\node[WhiteCircle, inner sep=11pt](c1r2) at (.5,\RTwo) {};		
	\node[B] at (c1r2){};
		\node[below] at (c1r2) {
		\begin{tikzpicture}[BoxArr]
			\BOX{0,0}{}\BOX{2,0}{}\bDOT{3,1}
		\end{tikzpicture}
		};
		\node[above] at (c1r2) {
		\begin{tikzpicture}[BoxArr]
			\BOX{0,0}{}\BOX{2,0}{}\bDOT{2,0}
		\end{tikzpicture}
		};
	\node[WhiteCircle, inner sep=11pt](c1r1) at (.2,\ROne){};		
	\node[B] at (c1r1){};
		\node[below] at (c1r1) {
			\begin{tikzpicture}[BoxArr]
				\BOX{0,0}{}\BOX{2,0}{}\bDOT{3,1}
			\end{tikzpicture}
			};
		\node[above] at (c1r1) {
			\begin{tikzpicture}[BoxArr]
			\BOX{0,0}{}\BOX{2,0}{}\bDOT{2,0}
			\end{tikzpicture}
			};
	\node[WhiteCircle, inner sep=11pt](r1c2) at (\ROne,.5*\RTwo+.5*\TOP){};		
	\node[B] at (r1c2) {};
		\node[left] at (r1c2) {
			\begin{tikzpicture}[BoxArr]
				\BOX{0,0}{}\BOX{0,-2}{}\bDOT{1,1}
			\end{tikzpicture}
			};
		\node[right] at (r1c2) {
			\begin{tikzpicture}[BoxArr]
				\BOX{0,0}{}\BOX{0,-2}{}\bDOT{0,0}
			\end{tikzpicture}
			};
	\node[WhiteCircle, inner sep=11pt](r2c2) at (\RTwo,\TOP-.15){};		
	\node[B] at (r2c2){};
		\node[left] at (r2c2) {
			\begin{tikzpicture}[BoxArr]
				\BOX{0,0}{}\BOX{0,-2}{}\bDOT{1,1}
			\end{tikzpicture}
			};
		\node[right] at (r2c2) {
			\begin{tikzpicture}[BoxArr]
				\BOX{0,0}{}\BOX{0,-2}{}\bDOT{0,0}
			\end{tikzpicture}
			};
	\node[WhiteCircle, inner sep=11pt](c1c2) at (.5*\ROne + .5*\RTwo,.5*\TOP+.5*\RTwo){};		
	\node[B] at (c1c2){};
			\node[below, outer sep=2pt] at (c1c2) {
			\begin{tikzpicture}[BoxArr]
				\BOX{0,0}{}\BOX{2,0}{}
			\end{tikzpicture}
			};
	\node[WhiteCircle, inner sep=14pt](c1-c1+1) at (.25*\ONE,.75*\ONE) {};	
	\filldraw[blue!30]  (.25*\ONE,.75*\ONE) to +(.151, -.151) arc (-45:135:.214cm) to  (.25*\ONE,.75*\ONE);
	\node[B, blue!75!black] at (c1-c1+1){};
		\node[below left] at (c1-c1+1) {
			\begin{tikzpicture}[BoxArr]
			\BOX{0,0}{}\BOX{0,1}{}
			\end{tikzpicture}
			};
		\node[above right] at (c1-c1+1) {
			\begin{tikzpicture}[BoxArr]
			\BOX{0,0}{}\BOX{1,0}{}
			\end{tikzpicture}
			};
	\draw[thick] (0,-.2)--(0,\TOP) node[above] {$c_1=0$};
	\draw[thick] (-.2,-.2)--(\TOP,\TOP) node[above right] {$c_1=c_2$};
	\draw[thick] (-.2,0)--(2,0) node[right] {$c_2=0$};
	\draw[dotted, thick, blue!75!black] (\TOP-\ONE,\TOP)--(-.2,-.2+\ONE) node[below left]{\small$c_2=c_1+1$};
	\draw[dotted, thick, blue!75!black] (-.2,\ONE+.2)--(\ONE+.2,-.2) node[below right]{\small$c_2=-c_1+1$};
	\draw[dotted, thick] (\ROne+.5,\ROne)--(-.2,\ROne) node[left] {\small$c_2=r_1$};
	\draw[dotted, thick] (\RTwo+.5,\RTwo)--(-.2,\RTwo) node[left] {\small$c_2=r_2$};
	\draw[dotted, thick] (\ROne,\ROne-.5)--(\ROne,\TOP) node[above] {\small$c_1=r_1$};
	\draw[dotted, thick] (\RTwo,\RTwo-.5)--(\RTwo,\TOP) node[above] {\small$c_1=r_2$};
\end{tikzpicture}}}
\end{figure}
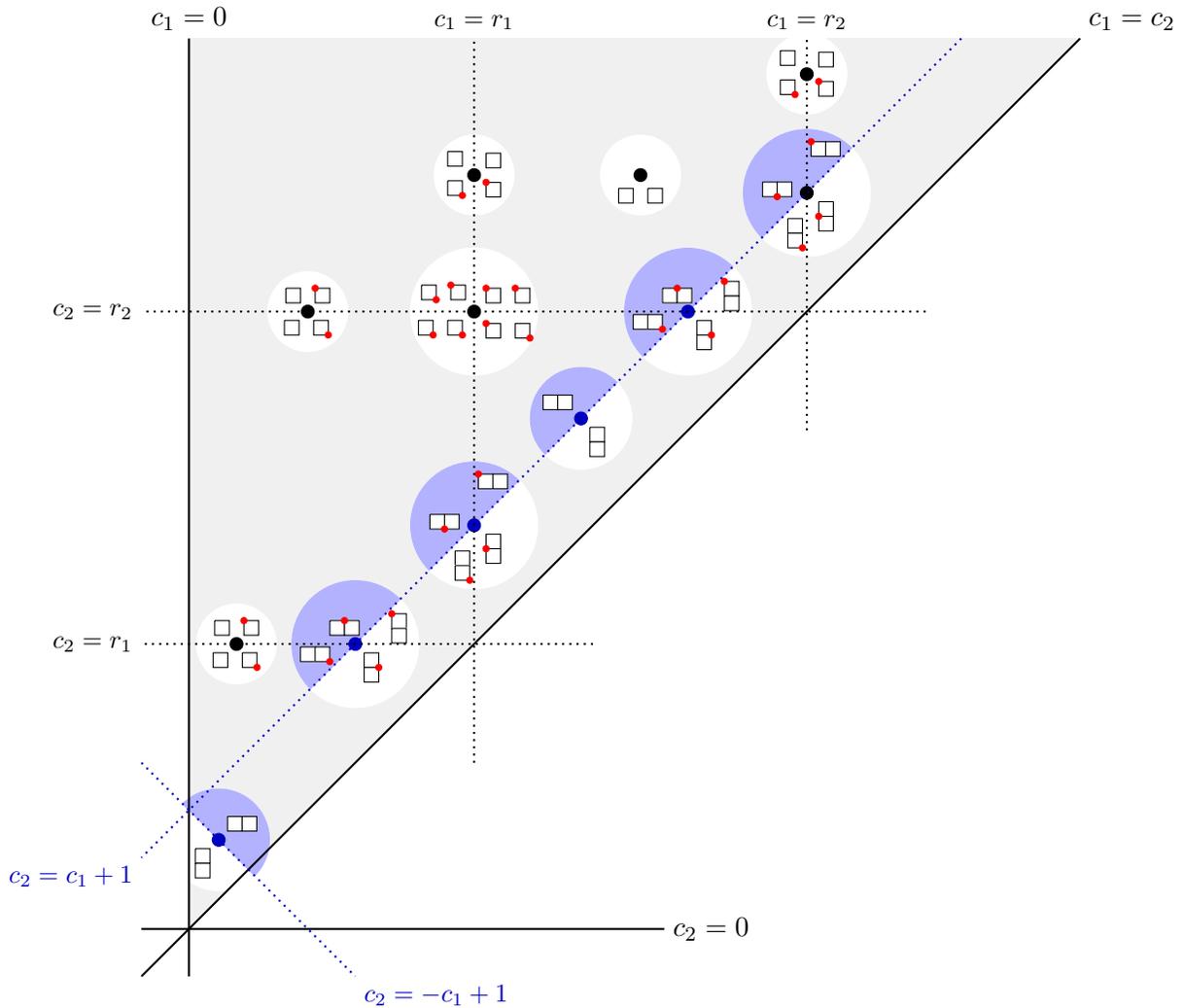

\pagebreak

The following proposition determines the action of the central element $Z$ on
each of the irreducible calibrated $TL_k^{\mathrm{ext}}$-modules.

\begin{prop} \label{centralchar}
Let $Z = W_1+W_1^{-1}+\cdots+W_k+W_k^{-1}$ be the central element of $TL_k^{\mathrm{ext}}$
studied in Theorem \ref{IIIexpansion}.  Assume that $\cc = (c, c+1, \ldots, c+k-1)$
and $H_k^{(z,\cc,J)}$ is an irreducible calibrated $TL_k^{\mathrm{ext}}$ as in Theorem 
\ref{TLconfigurations}.
If $v\in H_k^{(z,\cc,J)}$, then
$$Zv=\(t^\theta\)[k]v,
\qquad \hbox{where}\quad \theta = c+\hbox{$\frac{k-1}{2}$},
\ \ \(t^\theta\) = t^{\frac{\theta}{2}}+t^{-\frac{\theta}{2}}
\ \ \hbox{and}\ \  [k] = \frac{t^{\frac{k}{2}} - t^{-\frac{k}{2}}}{t^{\frac12}-t^{-\frac12}}.$$
\end{prop}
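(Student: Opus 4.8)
The plan is to combine the centrality of $Z$ with the explicit diagonal action of the $W_i$ supplied by Theorem \ref{thm:calibconst}. Since $Z=W_1+W_1^{-1}+\cdots+W_k+W_k^{-1}$ is central in $H_k^{\mathrm{ext}}$ by \eqref{Zdefn}, and the module $H_k^{(z,\cc,J)}$ is irreducible, Schur's lemma already forces $Z$ to act by a single scalar. So it suffices to evaluate $Zv_w$ on one labelled basis vector $v_w$, $w\in\cF^{(\cc,J)}$, and read off that scalar; in fact the computation below exhibits the scalar action directly, so Schur's lemma is only needed as orientation (and to know the answer is genuinely independent of the vector).

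The main step is a direct calculation. By \eqref{snormActionPW} we have $W_iv_w=\gamma_{w^{-1}(i)}v_w$ with $\gamma_i=-t^{c_i}$, so
\[
Zv_w=\sum_{i=1}^{k}\bigl(W_i+W_i^{-1}\bigr)v_w=\Bigl(\sum_{i=1}^{k}\bigl(\gamma_{w^{-1}(i)}+\gamma_{w^{-1}(i)}^{-1}\bigr)\Bigr)v_w.
\]
The key observation is that this eigenvalue is independent of $w$. From $c_{-j}=-c_j$ one gets $\gamma_{-j}=-t^{-c_j}=\gamma_j^{-1}$, so each summand equals $\gamma_{|w^{-1}(i)|}+\gamma_{|w^{-1}(i)|}^{-1}$; and since $w^{-1}$ is a signed permutation, the indices $w^{-1}(1),\dots,w^{-1}(k)$ have pairwise distinct absolute values, so $\{\,|w^{-1}(1)|,\dots,|w^{-1}(k)|\,\}=\{1,\dots,k\}$. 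Hence
\[
\sum_{i=1}^{k}\bigl(\gamma_{w^{-1}(i)}+\gamma_{w^{-1}(i)}^{-1}\bigr)=\sum_{\ell=1}^{k}\bigl(\gamma_\ell+\gamma_\ell^{-1}\bigr),
\]
a symmetric, Weyl-invariant quantity that is manifestly the same for every $w$. This simultaneously reproves the scalar action and pins down the scalar.

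The final step is to put this symmetric sum into closed form. Substituting $\gamma_\ell=-t^{c_\ell}$ with the prescribed consecutive contents $c_\ell=c+\ell-1$ turns $\sum_{\ell=1}^{k}(\gamma_\ell+\gamma_\ell^{-1})$ into a two-sided geometric series whose exponents $c,c+1,\dots,c+k-1$ are symmetric about their average $\theta=c+\tfrac{k-1}{2}$; summing it and factoring out the $t^{\pm\theta}$ produced by the endpoints yields the asserted value $[\![t^\theta]\!][k]$, using the geometric-series identity $[k]=t^{(k-1)/2}+\cdots+t^{-(k-1)/2}$ implicit in \eqref{notation}. I expect the only genuine care to be the bookkeeping in the middle paragraph---checking that the $w^{-1}(i)$ really do pick exactly one representative from each pair $\{\ell,-\ell\}$, which is what makes the $w$-dependence evaporate---after which the closed-form evaluation is a routine manipulation; the specific content sequence $c_\ell=c+\ell-1$ and the standing hypotheses of Theorem \ref{thm:calibconst} (ensuring $H_k^{(z,\cc,J)}$ exists and is irreducible) enter only at this last simplification.
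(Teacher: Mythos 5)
Your proposal is correct and takes essentially the same route as the paper's proof: the paper likewise evaluates $Z$ on a single weight vector whose $W_i$-eigenvalues carry the consecutive exponents $c, c+1, \ldots, c+k-1$, sums the two-sided geometric series and factors it as (a power of $t$ symmetric about $\theta$) times $[k]$, and then invokes Schur's lemma to extend the scalar to all of $H_k^{(z,\cc,J)}$; your explicit verification that the eigenvalue is independent of $w$ (via $\gamma_{-j}=\gamma_j^{-1}$ and the fact that a signed permutation selects one index from each pair $\{\ell,-\ell\}$) is a small bonus that the paper leaves to Schur's lemma. One caveat, which you inherit from the paper's own write-up rather than introduce: with the normalization $\gamma_\ell=-t^{c_\ell}$ of Theorem \ref{thm:calibconst}, the sum $\sum_\ell(\gamma_\ell+\gamma_\ell^{-1})$ is literally $-\sum_\ell(t^{c_\ell}+t^{-c_\ell})=-(t^{\theta}+t^{-\theta})[k]$, so the asserted scalar $[\![t^\theta]\!][k]$ (with $[\![t^\theta]\!]=t^{\frac{\theta}{2}}+t^{-\frac{\theta}{2}}$) is reached only after the sign and the $t$-versus-$t^{\frac12}$ exponent convention are absorbed---exactly the silent renormalization the paper performs when its proof writes the eigenvalues as $q^{c+i-1}$ instead of $-t^{c+i-1}$.
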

\begin{proof}
Let $v\in H_k^{(z,\cc,J)}$ be such that $W_i v=q^{c+i-1}$ for $i\in \{1, \ldots, k\}$.
Then $Zv_w = zv_w$ where
\begin{align*}
z&= t^{-(c+k-1)}+\cdots + t^{-(c+1)}+t^{-c}+t^c+t^{c+1}+\cdots +t^{c+k-1} \\
&= (t^{c+\frac{k-1}{2}}+t^{-(c+\frac{k-1}{2})}) (t^{-\frac{k-1}{2}}+\cdots+t^{\frac{k-1}{2}}) 
= (t^{\frac{\theta}{2}} + t^{-\frac{\theta}{2}}) 
\frac{ t^{\frac{k}{2}} - t^{-\frac{k}{2} } }{ t^{\frac12}-t^{-\frac12} }
= [\![t^\theta]\!] [k].
\end{align*}
Since $Z$ is a central element of $H_k^{\mathrm{ext}}$ and $H_k^{(z,\cc,J)}$ is a
simple $H_k^{\mathrm{ext}}$-module, Schur's lemma implies that if $v\in H_k^{(z,\cc,J)}$
then $Zv=zv$.
\end{proof}

\section{Schur-Weyl duality between $TL^{\mathrm{ext}}_k$ and $U_q\fgl_2$}
\def\({(\!(}
\def\){)\!)}

In this section we show that the Schur-Weyl duality studied in \cite{DR} provides
calibrated irreducible representations of the two boundary Temperley-Lieb algebra.
We classify these representations using the combinatorial classification of
irreducible calibrated $TL^{\mathrm{ext}}_k$ modules
obtained in Theorem \ref{TLconfigurations}. We follow the combinatorics of \cite[\S 4]{Da} and \cite[\S 5]{DR}. Note that similar constructions hold for replacing $\fgl_2$ with $\fsl_2$---see, for example, \cite[\S4]{Da}

The irreducible finite dimensional representations $L(\lambda)$ of $U_q(\fgl_2)$ 
are indexed by $\lambda = (\lambda_1, \lambda_2)\in \ZZ^2$ with $\lambda_1\ge \lambda_2$. By the Clebsch-Gordan formula or the Littlewood-Richardson rule (see \cite[(5.16)]{Mac}) 
$$L(a,0)\otimes L(b,0)
	= L(a+b,0)\oplus L(a+b-1,1) \oplus \cdots \oplus L(a+1, b-1)\oplus L(a,b),$$
and
$$L(\lambda_1,\lambda_2) \otimes L(1,0) 
	= \begin{cases} 
		L(\lambda_1+1, \lambda_2)  \oplus L(\lambda_1, \lambda_2 + 1), 
		&\text{if } \lambda_1 > \lambda_2,\\
		 L(\lambda_1+1, \lambda_2),  & \text{if } \lambda_1 = \lambda_2.
 	\end{cases}$$

Let $a,b\in \ZZ_{\ge 0}$ with $a\ge b$ and fix the simple $U_q\fgl_2$-modules
\begin{equation}
M=L(a,0), \qquad N=L(b,0)\qquad V = L(1,0).
\label{MNVchoices}
\end{equation}
We identify $(\lambda_1, \lambda_2)\in \ZZ^2$ with a left-justified arrangement of boxes with $\lambda_i$ boxes in the $i$th row. As in \cite[(5.28)]{DR} the \emph{shifted content} of a box in row $i$ and column $j$ as 
\begin{equation}\label{eq:shifted}
\tilde{c}(\textrm{box}) = j-i - \half(a+b-2)
\end{equation}
i.e.\ the shifted content is its diagonal number, where the box in the upper left corner has shifted content $- \half(a+b-2)$.  

For $j\in \ZZ_{\ge -1}$ let $\cP^{(j)}$ be an index set for the irreducible $U_q\fgl_2$-modules that appear in $M\otimes N\otimes V^{\otimes j}$. In particular,
\begin{enumerate}[\quad]
\item $\cP^{(-1)} = (a,0)$,
\quad $\cP^{(0)} = \{ (a+b-j,j)\ |\ j=0,1,\ldots, b\}$ \quad and
\item $\cP^{(j)} = \{ (a+b+j-\ell,\ell) ~|~  0\leq \ell\leq \half(j+a+b) \}$, for $j\ge 1$.
\end{enumerate}
Following \cite[\S 5.4]{DR}, the associated Bratteli diagram has 
\begin{enumerate}[\quad]
\item vertices on level $j$ labeled by the partitions in $\cP^{(j)}$, 
\item an edge $(a,0) \longrightarrow \mu$ for each $\mu \in\cP^{(0)}$, and
\item for each $j\ge 0$, $\mu \in\cP^{(j)}$ and $\lambda \in\cP^{(j+1)}$, there is 
$$\hbox{an edge}\quad \mu\to \lambda\quad 
\text{if $\lambda$ is obtained from $\mu$
by adding a box.}
$$
\end{enumerate}
The case when $a = 6$ and $b = 3$ is illustrated in Figure \ref{Fig:2bdryTLBrat}.

Assume $q \in \CC^\times$ and $a>b+2$ so that the generality condition 
$(a+1)-(b+1)\not\in \{0, \pm1, \pm 2\}$ of \cite[Theorem 5.5]{DR} is satisfied.  
Define
\begin{equation}
r_1 = \hbox{$\frac12$}(a-b) \qquad\hbox{and}\qquad
r_2 = \hbox{$\frac12$}(a+b+2),
\label{SWrvalues}
\end{equation}
and let $H^{\mathrm{ext}}_k$ be the extended two boundary Hecke algebra with parameters $t_0^{\frac12}$,
$t_k^{\frac12}$, and $t^{\frac12}$ given by
\begin{equation}
t^{\frac12} = q, \quad t_0 = -t^{r_2 - r_1} =  -q^{(b+1)},
\quad\hbox{and}\quad 
t_k = -t^{r_2+r_1} =  -q^{2(a+1)},
\label{SWparams}
\end{equation}
so that  $-t_k^{\frac12}t_0^{-\frac12} = -t^{r_1}$ and $t_k^{\frac12}t_0^{\frac12} = -t^{r_2}$ as in \cite[(3.5), (5.21)]{DR}. 
By \cite[Theorem 5.4 and (5.21)]{DR}
there are 
$$\hbox{commuting actions of 
$U_q\fgl_2$ and $H^{\mathrm{ext}}_k$ on $M\otimes N\otimes V^{\otimes k}$,}$$
where the $H^{\mathrm{ext}}_k$ action is given via R-matrices for the quantum group $U_q\fgl_2$.

\begin{thm}\label{thm:modules-in-tensor-space} Let $a,b\in \ZZ_{\ge 0}$ with $a>b+2$.
Let $q\in \CC^\times$ not a root of unity and let $H^{\mathrm{ext}}_k$ be the two 
boundary Hecke algebra
with parameters $t_0^{\frac12}$, $t_k^{\frac12}$ and $t^{\frac12}$ as in \eqref{SWparams}.
Let $U_q\fgl_2$ be the Drinfeld-Jimbo quantum group corresponding to $\fgl_2$ and let
$M$, $N$ and $V$ be the simple $U_q\fgl_2$-modules given in \eqref{MNVchoices}.
Then the $H^{\mathrm{ext}}_k$ action factors through $TL^{\mathrm{ext}}_k$ 
and, as $(U_q\fgl_2, TL^{\mathrm{ext}}_k)$-bimodules,
$$M\otimes N\otimes V^{\otimes k} \cong \bigoplus_{\lambda\in \cP^{(k)} } L(\lambda)\otimes B_k^\lambda
\qquad\hbox{with}\quad
B_k^{(a+b+k-\ell,\ell)}\cong H^{(z,\cc,J)},$$
where $z =(-1)^kq^{(a+b-\ell)(a+b-\ell-1)+\ell(\ell-3)-a(a-1)-b(b-1)-k(a+b-2)}$ and
$(\cc,J)$ is the local region corresponding to the configuration $\kappa$ of $2k$ boxes
\begin{equation}
{\def\A{6} \def\B{3}
\TikZ{[xscale=.4,yscale=-.4]
	\draw (1,0) rectangle (11,1) (-2,1) rectangle (8,2);
	\begin{scope}[thick,densely dotted]
		\draw (1,0) to +(-1,-1) node[above left, inner sep=0pt]{\footnotesize$r_2 - \ell$};
		\draw (-2,1) to +(-1,-1) node[above left, inner sep=0pt]{\footnotesize$\ell+1-r_2 - k$};
		\draw (11,1) to +(1,1) node[below right, inner sep=0pt]{\footnotesize$r_2 - 1+k-\ell$};
		\draw (8,2) to +(1,1) node[below right, inner sep=0pt]{\footnotesize$\ell-r_2$};
	\end{scope}
		\foreach \x in {(\B,1),(\A,1)}{\node[bV, blue] at \x {};}
		\foreach \x in {(0,2), (\A+\B,0)}{\node[bV, red] at \x {};}
}}
\label{kappapicture}
\end{equation}
that has $k$ boxes in each row, the shifted content of the leftmost box in the first row is $r_2-\ell$, the shifted content of the leftmost box in the second row is $\ell+1-r_2-k$.  Between the rows there are blue markers in diagonals with shifted content $\pm r_1$ and there are 
red markers in diagonals with shifted content $\pm r_2$,  as pictured.
Explicitly, $\cc=(c_1, c_2, \ldots, c_k)$ is the sequence of 
$$\text{absolute values of} \quad  
c,\ c+1,\ \cdots,\ c+k-1, \quad \text{ where } c = \hbox{$\frac12$}(a+b)-\ell+1,
$$
arranged in increasing order;
and $J$ is the union of 
$$J_1 = \begin{cases} \emptyset,\ & \hbox{if $a\ge b\ge \ell$,}\\
			 \{ \varepsilon_{\ell-b}\},\ & \hbox{if $a \ge  \ell > b$,}\\
			  \{ \varepsilon_{a-b}\}, & \hbox{if $\ell > a > b$},
			  \end{cases}$$
			  and			  
$$J_2= \begin{cases} \emptyset,\ & \hbox{if $\half(a+b+2) > \ell$,}\\
			  \{\vep_2-\vep_1, \vep_4-\vep_3, \ldots, 
				\vep_{2\ell-a-b}-\vep_{2\ell-a-b-1}\}, & \hbox{if $\ell\ge \half(a+b+2)$ and $a+b$ even,}\\
			  \{
			  	\vep_3-\vep_2, \vep_5-\vep_4, \ldots, 
				\vep_{2\ell-a-b}-\vep_{2\ell-a-b-1}\}, & \hbox{if $\ell\ge \half(a+b+2)$ and $a+b$ odd.}\end{cases}
$$
\end{thm}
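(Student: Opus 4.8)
The plan is to deduce the theorem from the $U_q\fgl_2$--$H_k^{\mathrm{ext}}$ Schur-Weyl duality of \cite[Theorems 5.4 and 5.5]{DR}, and then to recognise the resulting multiplicity spaces as the two-row modules isolated in Theorem \ref{TLconfigurations}. The hypothesis $a>b+2$ gives $(a+1)-(b+1)\notin\{0,\pm1,\pm2\}$, so that duality applies and produces commuting $U_q\fgl_2$ and $H_k^{\mathrm{ext}}$ actions on $M\otimes N\otimes V^{\otimes k}$ together with the bimodule decomposition $\bigoplus_{\lambda\in\cP^{(k)}}L(\lambda)\otimes B_k^\lambda$, in which each $B_k^\lambda$ is an irreducible calibrated $H_k^{\mathrm{ext}}$-module whose weight basis is indexed (via Proposition \ref{prop:stdtabbijection}) by the paths from $(a,0)$ to $\lambda$ in the Bratteli diagram. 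By the explicit form of that duality the $W_i$-eigenvalue attached to a path is $-t^{\tilde c}$, where $\tilde c$ is the shifted content \eqref{eq:shifted} of the box added at step $i$.

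First I would read off $\cc$. Writing $s=\frac{1}{2}(a+b)$, a path to $\lambda=(a+b+k-\ell,\ell)$ through the level-$0$ shape $(a+b-j_0,j_0)$ adds its $k$ boxes with shifted contents $\{s-j_0+1,\dots,s+k-\ell\}$ in the first row and $\{j_0-s,\dots,\ell-1-s\}$ in the second row. Because the Weyl group $\cW_0$ changes signs $\varepsilon_i\mapsto-\varepsilon_i$, the local region depends only on the multiset of absolute values of these contents; and negating the second-row contents turns $\{j_0-s,\dots,\ell-1-s\}$ into $\{s-\ell+1,\dots,s-j_0\}$, which abuts the first-row contents to form the single interval $\{s-\ell+1,\dots,s+k-\ell\}=\{c,c+1,\dots,c+k-1\}$ with $c=s-\ell+1=r_2-\ell$. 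Since absolute values are insensitive to this negation, $\cc$ is the increasing rearrangement of $\{|c|,\dots,|c+k-1|\}$, independent of $j_0$. Thus the $k$ raw diagonals $[c,c+k-1]$ together with their $180^\circ$ rotations $[-(c+k-1),-c]$ (supplied by $c_{-i}=-c_i$) make the configuration $\kappa$ a $180^\circ$ rotationally symmetric skew shape with two rows of $k$ boxes, exactly as in \eqref{eq:TL-shapes} and \eqref{kappapicture}.

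With the two-row shape of $\kappa$ in hand, Theorem \ref{TLconfigurations} gives immediately that each $B_k^\lambda=H^{(z,\cc,J)}$ is a $TL_k^{\mathrm{ext}}$-module; as this holds for every summand, the idempotents $p_i^{(1^3)}$, $p_0^{(\emptyset,1^2)}$, $p_0^{(1^2,\emptyset)}$, $p_{0^\vee}^{(\emptyset,1^2)}$, $p_{0^\vee}^{(1^2,\emptyset)}$ act as zero on all of $M\otimes N\otimes V^{\otimes k}$, so the $H_k^{\mathrm{ext}}$-action factors through $TL_k^{\mathrm{ext}}$ and the bimodule decomposition follows. It then remains to pin down $J=J_1\cup J_2$ and the scalar $z$. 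I would obtain $J$ directly from ($\kappa$3)--($\kappa$4): $J_1$ records whether the boxes meeting the marked diagonals $\pm r_1,\pm r_2$ lie NW or SE of their markings, which toggles according to whether $\ell$ is below $b$, between $b$ and $a$, or above $a$; $J_2$ records the vertical adjacencies forced once $\ell\ge\frac{1}{2}(a+b+2)=r_2$ makes the two rows of $\kappa$ overlap horizontally, producing the strings $\varepsilon_{2i}-\varepsilon_{2i-1}$ (respectively $\varepsilon_{2i+1}-\varepsilon_{2i}$), with the parity of $a+b$ fixing the offset. Finally $z$ is the eigenvalue of the central element $PW_1\cdots W_k$; under the duality it is computed from the ribbon (balancing) element of $U_q\fgl_2$, whose scalar on $L(\lambda)$ is governed by $\langle\lambda,\lambda+2\rho\rangle$, and subtracting the contributions of $M$, $N$ and the $k$ copies of $V$ yields the stated exponent.

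The conceptual reduction to Theorem \ref{TLconfigurations} is clean, so the main obstacle is the bookkeeping of the last paragraph: translating the NW/SE marking rules and the inter-row overlap into the precise case divisions for $J_1$ and $J_2$ (including the even/odd $a+b$ split), and matching the central-character exponent against the ribbon-element calculation. Care is needed in the boundary cases $\ell=b$, $\ell=a$ and $\ell=r_2$, and when $b$ is small enough that $r_2=r_1+1$ (so that the standing hypothesis $r_2>r_1+1$ of Theorem \ref{TLconfigurations} degenerates and must be checked directly); one must also confirm, via Proposition \ref{prop:stdtabbijection}, that every standard filling of $\kappa$ genuinely corresponds to a Bratteli path, so that no spurious element of $J$ is introduced.
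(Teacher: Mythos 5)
Your proposal is correct and follows the same skeleton as the paper's proof: both rest on the Schur--Weyl duality of \cite[Theorem 5.5]{DR} to obtain the bimodule decomposition with calibrated irreducible multiplicity spaces, identify the associated configuration $\kappa$ as a $180^\circ$-rotationally symmetric two-row skew shape as in \eqref{eq:TL-shapes}, invoke (only the ``if'' direction of) Theorem \ref{TLconfigurations} to conclude that every summand, hence all of $M\otimes N\otimes V^{\otimes k}$, is killed by the defining idempotents, and then extract $(\cc,J)$ from the conditions ($\kappa$1)--($\kappa$4). You deviate in two places. First, where the paper simply quotes \cite[(5.27) and (5.35)]{DR}---which hands over the configuration ready-made as $\kappa=\mathrm{rot}(\lambda/S^{(0)}_{\mathrm{max}})\cup\lambda/S^{(0)}_{\mathrm{max}}$---you rederive $\cc$ by the interval-merging argument on shifted contents (negating the second-row contents to splice the two intervals into $\{c,\dots,c+k-1\}$ independently of the level-$0$ starting shape); this is correct and pleasantly self-contained, but note that the diagonals alone do not determine $\kappa$: the NW/SE placements, equivalently $J$, must still come from the path combinatorics, which is exactly the verification you defer to Proposition \ref{prop:stdtabbijection}, so your ``two rows of $k$ boxes'' assertion is really established only after that check. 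Second, for $z$ you propose a ribbon-element computation via $\langle\lambda,\lambda+2\rho\rangle$; the paper avoids this entirely because \cite[Theorem 5.5 and (5.35)]{DR} already supplies $z=(-1)^kq^{2c_0}$ with $c_0$ an explicit combination of $k,a,b$ and the content sum of $\lambda$, so the only work remaining there is a one-line summation of contents---your route would reproduce the same answer but duplicates what the cited result packages. Finally, your flag about $b=0$ (where $r_1=\frac12(a-b)$ and $r_2=\frac12(a+b+2)$ give $r_2=r_1+1$, degenerating the standing hypothesis of Theorem \ref{TLconfigurations}) is a genuinely good catch that the paper passes over in silence; it is harmless here because only Step 2 of that proof (two-row $\kappa$ forces $pv_w=0$) is needed, and its case analysis never uses the inequality $r_2>r_1+1$, which enters only in the classification direction.
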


\begin{proof}  Fix $\lambda = (a+b+k-\ell,\ell)\in \cP^{(k)}$.  
The sum of the contents of the boxes in
$\lambda$ is 
\begin{align*}
\sum_{\mathrm{box}\in \lambda} c(\mathrm{box}) 
&= (0+1+\ldots + (a+b+k-\ell-1))+(-1+0+\cdots+\ell-2) \\
&= \hbox{$\frac12$}(a+b+k-\ell-1)(a+b+k-\ell)+\hbox{$\frac12$}\ell(\ell-3).
\end{align*}
By \cite[Theorem 5.5 and (5.35)]{DR},
$\cB_k^\lambda\cong H_k^{(z,\cc,J)}$ where 
$$z=(-1)^k q^{2c_0},\quad\hbox{where}\quad
c_0= -\hbox{$\frac12$}(k(a+b-2)+a(a-1)+b(b-1)) + \sum_{\mathrm{box}\in \lambda} c(\mathrm{box}),
$$
and $\cc$ and $J$ and the corresponding configuration
$\kappa$ of $2k$ boxes are determined as follows.

\smallskip
Place markers at the NW corner of the boxes at positions $(1, a+b+1)$, $(2,a+1)$, $(2, b+1)$, and  $(3,1)$ so that these markers are in the diagonals with shifted contents $\pm r_1$ and $\pm r_2$.
{\def\A{6} \def\B{3} \def\H{.65} \def\W{13} \def\L{8}
$$\lambda = (a+b+k-\ell, \ell) = \quad
\TikZ{[xscale=.4,yscale=-.4]
		\draw (0,0) rectangle (\W,1);
		\draw (0,1) to (0,2) to (\L,2) to (\L,1);
		\draw (\B,1) to (\B,2) (\A,1) to (\A,0) (\A+\B,0) to (\A+\B,1);
		\foreach \x in {(\B,1),(\A,1)}{\node[bV, blue] at \x {};}
		\foreach \x in {(0,2), (\A+\B,0)}{\node[bV, red] at \x {};}
		\begin{scope}[<->, shorten >=2pt, shorten <=2pt, every node/.style={fill=white,inner sep=2pt}]
		\draw (0,2+\H) to node{\footnotesize$b$} (\B,2+\H);
		\draw(\B,2+\H) to node{\footnotesize$\ell-b$} (\L,2+\H);
		\draw(0,-\H) to node{\footnotesize$a$} (\A,-\H);
		\draw(\A,-\H) to node{\footnotesize$b$} (\A+\B,-\H);
		\draw (\A+\B,-\H) to node{\footnotesize$k-\ell$} (\W,-\H);
		\end{scope}
		\foreach \x in {0,\A,\A+\B,\W}{\draw (\x,-\H-.3) to (\x,-\H+.3);}
		\foreach \x in {0,\B,\L}{\draw (\x,2+\H-.3) to (\x,2+\H+.3);}
		\draw[thick,densely dotted] (\L,2) to +(1,1) node[below right, inner sep=0pt]{\footnotesize$\ell-r_2$};
		\draw[thick,densely dotted] (\W,1) to +(1,1) node[below right, inner sep=0pt]{\footnotesize$r_2 - 1 + k -\ell$};
		}$$}
Following \cite[(5.27)]{DR}, let
$$S^{(0)}_{\mathrm{max}} = \begin{cases}
(a+b-\ell, \ell), &\hbox{if $a\ge b \ge \ell$,} \\
(a,b), &\hbox{if $a\ge \ell\ge b$}
\end{cases}
$$
(since $a\ge b$ we are in the left case of \cite[(5.15)]{DR} with $c=d=1$ so that $\mu^c = \min(\ell,b)$
and $S^{(0)}_{\mathrm{max}} = \mathring{\mu} = (a+b-\mu^c,\mu^c)$):  
{\def\A{6} \def\B{3} \def\H{.65} \def\W{13} \def\L{2}
$$
\begin{matrix}
\TikZ{[xscale=.4,yscale=-.4]
		\filldraw[black!20] (0,0) rectangle (\A+1,1); \draw (\A+1,0) to (\A+1,1);
		\draw[fill = black!20] (0,1) to (0,2) to (\L,2) to (\L,1);
		\draw (0,1) to (0,0) to (\W,0) to (\W,1) to (\L,1); \draw (\A+1,0) to (\A+1,1);
		\node[fill = black!10, inner sep = 1.5pt, rounded corners, below right, outer sep=1pt] at (0,0) {\tiny$\displaystyle S^{(0)}_{\mathrm{max}}$};
		\begin{scope}[<->, shorten >=2pt, shorten <=2pt, every node/.style={fill=white,inner sep=2pt}]
		\draw (0,2+\H) to node{\footnotesize$\ell$} (\L,2+\H);
		\draw(0,-\H) to node{\footnotesize$a+b-\ell$} (\A+1,-\H);
		\draw(\A+1,-\H) to node{\footnotesize$k$} (\W,-\H);
		\end{scope}
		\foreach \x in {0,\A+1,\W}{\draw (\x,-\H-.3) to (\x,-\H+.3);}
		\foreach \x in {0,\L}{\draw (\x,2+\H-.3) to (\x,2+\H+.3);}
		\foreach \x in {(\B,1),(\A,1)}{\node[bV, blue] at \x {};}
		\foreach \x in {(0,2), (\A+\B,0)}{\node[bV, red] at \x {};}
		}\\\text{if } a \ge b \ge \ell
	\end{matrix}\qquad\qquad 
	\def\L{8}
	\begin{matrix}
\TikZ{[xscale=.4,yscale=-.4]
		\filldraw[black!20] (0,0) rectangle (\A,1); \draw (\A,0) to (\A,1);
		\draw[fill = black!20] (0,1) to (0,2) to (\B,2) to (\B,1); \draw (\B,2) to (\L,2) to (\L,1);
		\draw (0,1) to (0,0) to (\W,0) to (\W,1) to (\B,1); \draw (\A,0) to (\A,1);
		\node[fill = black!10, inner sep = 1.5pt, rounded corners, below right, outer sep=1pt] at (0,0) {\tiny$\displaystyle S^{(0)}_{\mathrm{max}}$};
		\begin{scope}[<->, shorten >=2pt, shorten <=2pt, every node/.style={fill=white,inner sep=2pt}]
		\draw (0,2+\H) to node{\footnotesize$b$} (\B,2+\H);
		\draw (\B,2+\H) to node{\footnotesize$\ell-b$} (\L,2+\H);
		\draw(0,-\H) to node{\footnotesize$a$} (\A,-\H);
		\draw(\A,-\H) to node{\footnotesize$k-(\ell-b)$} (\W,-\H);
		\end{scope}
		\foreach \x in {0,\A,\W}{\draw (\x,-\H-.3) to (\x,-\H+.3);}
		\foreach \x in {0,\B,\L}{\draw (\x,2+\H-.3) to (\x,2+\H+.3);}
		\foreach \x in {(\B,1),(\A,1)}{\node[bV, blue] at \x {};}
		\foreach \x in {(0,2), (\A+\B,0)}{\node[bV, red] at \x {};}
		}\\\text{if } a \ge \ell > b \text{~ or ~} \ell > a \ge b
	\end{matrix}\ .$$}

\noindent By \cite[(5.35)]{DR}, the 
corresponding configuration of boxes is 
$\kappa = \mathrm{rot}(\lambda/S^{(0)}_{\mathrm{max}}) \cup \lambda/S^{(0)}_{\mathrm{max}}$,
as pictured above in \eqref{kappapicture}.

To determine $(\cc, J)$, use the conditions ($\kappa$1)--($\kappa$4) of Section \ref{configurationsofboxes} which specify the relation between 
$\kappa$ and $(\cc, J)$.
First index the boxes of $\kappa$ with $-k, \ldots, -1, 1, \ldots, k$ by diagonals, left to right, and
NW to SE along diagonals.
The sequence $\cc = (c_1, \ldots, c_k)$ with $0\le c_1\le c_2\le \cdots \le c_k$
is the sequence of the absolute values of the shifted contents of 
boxes in the first row of $\kappa$.   Next, the set $J$ is determined as follows. 

\begin{enumerate}[1.]
\item By ($\kappa$4), the set $J$ contains $\varepsilon_i$ if $i>0$ and $\mathrm{box}_i$ is NW of the marker in the diagonal with shifted content $r_1$ or $r_2$ in $\kappa$. This occurs on diagonal $r_1$ whenever $\ell>b$  (marked in blue),
$$\vep_{\ell-b}\in J\ \hbox{if $a\ge \ell>b$}
\qquad\hbox{and}\qquad
\vep_{a-b}\in J\ \hbox{if $\ell>a\ge b$;}
$$
and $J$ contains no roots of the form $\vep_j$ when $a \ge b \ge \ell$.

\item By ($\kappa$3), the set $J$ contains $\varepsilon_j-\varepsilon_i$ 
if $j>i>0$ and $\mathrm{box}_i$ and $\mathrm{box}_j$ are in the same column of $\kappa$ 
(so that $\mathrm{box}_i$ and 
$\mathrm{box}_j$ are in adjacent diagonals and $\mathrm{box}_j$ is NW of $\mathrm{box}_i$).  
This occurs exactly when 
$0 \ge r_2 - \ell = \half(a+b+2) -\ell$.
If $\ell\ge \frac12(a+b+2)$ and $a+b$ is even then the
boxes indexed $1, 3, \dots, 1 + 2(\ell - \half(a + b + 2)) = 2\ell - (a + b + 1)$
are in the second row directly below boxes of index $2,4,\ldots, 2\ell-a-b$.
If $\ell\ge \frac12(a+b+2)$ and $a + b$ is odd then boxes $2, 4, \dots, 2(\ell - \half(a + b +1))$, directly below boxes of index $3,5, \ldots, 2\ell-a-b$:
$$
\begin{matrix}
\begin{tikzpicture}[xscale=.5,yscale=-.5]
	\draw (-1,0) rectangle (12,1) (-4,1) rectangle (9,2);
	\begin{scope}[thick,densely dotted, blue]
		\draw (0,0) to +(-.75,-.75) node[above left, inner sep=0pt]{\footnotesize$r_2 - \ell$};
		\draw (9,2) to +(1,1) node[below right, inner sep=0pt]{\footnotesize$\ell-r_2$};
			\draw (3,0) to +(-.75,-.75) node[above left, inner sep=0pt]{\footnotesize$0$};
	\end{scope}
		\foreach \x in {2, ..., 6, 8, 0}{\draw (\x,0) to  (\x,2);}
			\draw (-1,1) to (-1,2) (9,0) to (9,1) (11,0) to (11,1) (-3,1) to (-3,2);
		\node at (4.5, 1.5) {\scriptsize $1$};
		\node at (4.5, .5) {\scriptsize $2$};
		\node at (5.5, 1.5) {\scriptsize $3$};
		\node at (5.5, .5) {\scriptsize $4$};
		\node at (3.5, .5) {\scriptsize -$1$};
		\node at (3.5, 1.5) {\scriptsize -$2$};
		\node at (2.5, .5) {\scriptsize -$3$};
		\node at (2.5, 1.5) {\scriptsize -$4$};
		\node at (7, 1.5) {\scriptsize $\cdots$};
		\node at (7, .5) {\scriptsize $\cdots$};
		\node at (10, .5) {\scriptsize $\cdots$};
		\node at (8.5, .5) {\tiny $*\!+\!1$};
		\node at (11.5, .5) {\scriptsize $k$};
		\node at (-.5, 1.5) {\tiny -$*$-$1$};
		\node at (-.5, .5) {\scriptsize -$*$};
		\node at (-3.5, 1.5) {\scriptsize -$k$};
		\node at (1, 1.5) {\scriptsize $\cdots$};
		\node at (1, .5) {\scriptsize $\cdots$};
		\node at (-2, 1.5) {\scriptsize $\cdots$};
		\draw[->, shorten > = 3pt] (10.5,2) node[right] 
			{\scriptsize$2\ell - a - b - 1$} to 
				(8.5, 1.5) node {\scriptsize $*$};
\end{tikzpicture}
\qquad
&
\hbox{if $a+b$ is even,} 
\\
\begin{tikzpicture}[xscale=.5,yscale=-.5]
	\draw (0,0) rectangle (12,1) (-3,1) rectangle (9,2);
	\begin{scope}[thick,densely dotted, blue]
		\draw (0,0) to +(-.75,-.75) node[above left, inner sep=0pt]{\footnotesize$r_2 - \ell$};
		\draw (9,2) to +(1,1) node[below right, inner sep=0pt]{\footnotesize$\ell-r_2$};
		\draw (6,2) to +(1,1) node[below right, inner sep=0pt]{\footnotesize$\half$};		\draw (3,0) to +(-.75,-.75) node[above left, inner sep=0pt]{\footnotesize$-\half$};
	\end{scope}
		\foreach \x in {3, ..., 6, 8, 1}{\draw (\x,0) to  (\x,2);}
			\draw (0,1) to (0,2) (9,0) to (9,1) (11,0) to (11,1) (-2,1) to (-2,2);
		\node at (4.5, .5) {\scriptsize $1$};
		\node at (5.5, 1.5) {\scriptsize $2$};
		\node at (5.5, .5) {\scriptsize $3$};
		\node at (7, 1.5) {\scriptsize $\cdots$};
		\node at (7, .5) {\scriptsize $\cdots$};
		\node at (10, .5) {\scriptsize $\cdots$};
		\node at (8.5, .5) {\tiny $*\!+\!1$};
		\node at (11.5, .5) {\scriptsize $k$};
		\node at (4.5, 1.5) {\scriptsize -$1$};
		\node at (3.5, .5) {\scriptsize -$2$};
		\node at (3.5, 1.5) {\scriptsize -$3$};
		\node at (.5, 1.5) {\tiny -$*$-$1$};
		\node at (.5, .5) {\scriptsize -$*$};
		\node at (-2.5, 1.5) {\scriptsize -$k$};
		\node at (2, 1.5) {\scriptsize $\cdots$};
		\node at (2, .5) {\scriptsize $\cdots$};
		\node at (-1, 1.5) {\scriptsize $\cdots$};
		\draw[->, shorten > = 3pt] (10.5,2) node[right] {\scriptsize$2\ell - a - b - 1$} to 
				(8.5, 1.5) node {\scriptsize $*$};
\end{tikzpicture} 
&
\hbox{if $a+b$  is odd.} 
\end{matrix}$$
So $J$ contains 
$$\vep_2-\vep_1, \vep_4-\vep_3, \ldots, \vep_{2\ell-a-b}-\vep_{2\ell-a-b-1} \quad 
\text{if $\ell\ge \frac12(a+b+2)$ and $a+b$ is even, or}$$
$$\vep_3-\vep_2, \vep_5-\vep_4, \ldots, \vep_{2\ell-a-b}-\vep_{2\ell-a-b-1} \quad 
\text{if $\ell\ge \frac12(a+b+2)$ and $a+b$ is odd.}$$

\item Also by $(\kappa3)$, the set $J$ contains $\varepsilon_j+\varepsilon_i$ if $j>i>0$, and $\mathrm{box}_j$ is directly above $\mathrm{box}_{-i}$, which does not occur.
\end{enumerate}
In this way $\cc$ and $J$ are determined from $\kappa$.
Since all of these $H_k^{(z,\cc,J)}$
satisfy the conditions of Theorem \ref{TLconfigurations}, 
it follows that the $H^{\mathrm{ext}}_k$-action on $M\otimes N\otimes V^{\otimes k}$ factors through $TL^{\mathrm{ext}}_k$.
\end{proof}

\begin{remark}\label{dimformulas}
The dimension of $B_k^{(a+b+k-\ell, \ell)}$ is the number of paths in the 
Bratteli diagram from a shape on level $0$ to the shape $\lambda = (a+b+k-\ell,\ell)$ on level $k$.
Summing over the shapes on level $0$ for which there is a path to $\lambda$ gives
$$\dim(B^{(a+b+k-\ell,\ell)}) = \sum_{c = \max(0, \ell-k)}^{\min(b,\ell)}
f^{\lambda/(a+b-c,c)},$$
where $f^{\lambda/\mu}$ is the number of standard tableaux of skew shape $\lambda/\mu$.  If
$\ell \le a+b-c$ then the second row of $\lambda/(a+b-\ell,\ell)$ does not overlap the first row and thus
$$f^{\lambda/(a+b-c,c)} = \binom{k}{\ell-c}
\qquad\hbox{if $\ell \le a+b-c$.}$$
Since $c \le \min(b,\ell)$, the case $\ell>a+b-c$ can occur only when $\ell>a\ge b$, in which case 
{\def\A{8} \def\B{3} \def\C{3} \def\K{18} \def\H{.65} \def\L{12} 
$$(a+b+k-\ell,\ell)/(a+b-c,c) = \quad
\TikZ{[xscale=.4,yscale=-.4]
		\filldraw[black!20] (0,0) to (\A + \B - \C,0) to (\A + \B - \C,1) to (\C,1) to (\C,2) to (0,2) to (0,0);
		\draw (0,0) rectangle (\A + \B + \K - \L,1);
		\draw (0,1) to (0,2) to (\L,2) to (\L,1);
		\draw (\A + \B - \C,0) to (\A + \B - \C,2) (\C,2) to (\C,1); 
		\begin{scope}[<->, shorten >=2pt, shorten <=2pt, every node/.style={fill=white,inner sep=2pt}]
		\draw (0,2+\H) to node{\footnotesize$c$} (\C,2+\H);
		\draw (0,1.5) to node{\footnotesize$\ell$} (\L,1.5);
		\draw (\A+\B-\C,2+\H) to node[below, inner sep=1pt, outer sep=2pt]{\footnotesize$\ell - (a+b-c)$} (\L,2+\H);
		\draw(0,-\H) to node{\footnotesize$a+b-c$} (\A + \B - \C,-\H);
		\draw(\A + \B - \C,-\H) to node{\footnotesize$k-\ell+c$} (\A + \B + \K - \L,-\H);
		\end{scope}
		}\ ,$$}
so that
\begin{align*}
f^{(a+b+k-\ell,\ell)/(a+b-c,c)}
	= \sum_{j = \ell - (a+b-c)}^{k+\ell - c} f^{(k-j,j)}
	&= \sum_{j = \ell - (a+b-c)}^{\min(k-(\ell - c), \ell-c)} \binom{k}{j} - \binom{k}{j-1}\\
	&= \binom{k}{\ell-c} - \binom{k}{\ell - (a+b-c) - 1}.
\end{align*}
Namely, the first equality comes from the Pieri formula and the expansion of a 
skew Schur function by Littlewood-Richardson coefficients (see \cite[(5.16)]{Mac} for the 
Pieri formula and \cite[(5.2) and (5.3)]{Mac} for Littlewood-Richardson coefficients)
and the second equality comes from the number of standard tableaux of a two row shape 
as given, for example, in \cite[Theorem 2.8.5 and Lemma 2.8.4]{GHJ}.
\end{remark}

The following examples reference the node label styles in Figure \ref{Fig:2bdryTLBrat}.

\tikzstyle{FULL}=[fill=yellow!20, circle, inner sep=2pt, draw]
\tikzstyle{SumToL}=[fill=blue!15, draw]
\tikzstyle{SumToBKL}=[fill=cyan!15, diamond, inner sep=1.5pt, draw]
\tikzstyle{SumToA}=[fill=green!15,  regular polygon,regular polygon sides=3, inner sep=1pt, draw]
\tikzstyle{Overlapping}=[fill=red!15, regular polygon,regular polygon sides=8, inner sep=1pt, draw]
\tikzstyle{OverlappingB}=[fill=orange!15, regular polygon,regular polygon sides=6, inner sep=1pt, draw]
\begin{example}Let $a=7$ and $b=3$.
The markers are in the 
diagonals with shifted contents $\pm r_1$ and $\pm r_2$, where $r_1 = 2$ and $r_2 = 6$.
 An example where $\ell > a\ge b$: Let  $\ell = 11$ and $k = 14$, then 
$$
\OneTLNode{Overlapping}{11}
\qquad\qquad
\lambda = (13,11) = \TikZ{[xscale=.4, yscale=-.4] 
	\filldraw[black!20] (0,0) to (7,0) to (7,1) to (3,1) to (3,2) to (0,2) to (0,0);
	\Part{13,11}
	 \node[bV,red] at (10,0){}; \node[bV,blue] at (7,1){}; \node[bV,blue] at (3,1){}; \node[bV,red] at (0,2){};
}
\qquad\hbox{with $S^{(0)}_{\mathrm{max}} = (7,3)$.}
$$
The boxes of $\lambda/S^{(0)}_{\mathrm{max}}$ have
$$\text{shifted contents: } 
\TikZ{[xscale=.5,yscale=-.5]
	\draw (7,2) to (7,0) to (13,0) to (13,1) to (3,1) to (3,2) to (11,2) to (11,0) 
			(8,0) to (8,2) (9,0) to (9,2) (10,0) to (10,2)  
			(4,2) to (4,1) (5,1) to (5,2) (6,1) to (6,2) 
			(12,0) to (12,1);
	\Cont{3.5,1.5}{-$2$}
	\Cont{4.5,1.5}{-$1$}
	\Cont{5.5,1.5}{$0$}
	\Cont{6.5,1.5}{$1$}
	\Cont{7.5,1.5}{$2$}
	\Cont{7.5,.5}{$3$}
	\Cont{8.5,1.5}{$3$}
	\Cont{8.5,.5}{$4$}
	\Cont{9.5,1.5}{$4$}
	\Cont{9.5,.5}{$5$}
	\Cont{10.5,1.5}{$5$}
	\Cont{10.5,.5}{$6$}
	\Cont{11.5,.5}{$7$}
	\Cont{12.5,.5}{$8$}
	 \node[bV,red] at (10,0){}; \node[bV,blue] at (7,1){}; \node[bV,blue] at (3,1){}; 
}
$$
Then $\cc$ is the rearrangement of the absolute values of 
$(-2,-1,0,1,2,3,3,4,4,5,5,6,7,8)$ into increasing order
and $J = \{\vep_4, \vep_2-\vep_1, \vep_4-\vep_3, \vep_6-\vep_5, \vep_8-\vep_7, \vep_{10}-\vep_9, \vep_{12}-\vep_{11}\}$.
The configuration of boxes $\kappa$ corresponding to $(\cc, J)$ has indexing of boxes
$$
\TikZ{[xscale=.5,yscale=-.5]
	\draw (-1,2) to (-1,0) to (13,0) to (13,1) to (-3,1) to (-3,2) to (11,2) to (11,0);
	\foreach \x in {0,1,...,10} {\draw (\x,0) to (\x,2);}
	\draw (12,0) to (12,1) (-2,1) to (-2,2);
	\foreach \x in {2, 4, ..., 12}{\Cont{4.5+.5*\x,.5}{$\x$}}
	\foreach \x in {1, 3, ..., 11}{\Cont{5+.5*\x,1.5}{$\x$}}
	\foreach \x in {2, 4, ..., 12}{\Cont{5.5-.5*\x,1.5}{-$\x$}}
	\foreach \x in {1, 3, ..., 11}{\Cont{5-.5*\x,.5}{-$\x$}}
	\Cont{11.5,.5}{$13$}
	\Cont{12.5,.5}{$14$}
	\Cont{-1.5,1.5}{-$13$}
	\Cont{-2.5,1.5}{-$14$}
	 \node[bV,red] at (10,0){}; \node[bV,blue] at (7,1){}; \node[bV,blue] at (3,1){};  \node[bV,red] at (0,2){}; 
}
$$

\end{example}

\begin{example}\label{ex:SW-modules}
Let $a=6$ and $b=3$ to take advantage of the setting and notation of Figure \ref{Fig:2bdryTLBrat}.
The markers are in the 
diagonals with shifted contents $\pm r_1$ and $\pm r_2$, where $r_1 = \frac32$ and $r_2 = \frac{11}{2}$.
\begin{enumerate}[(1)]
\item An example where $\ell > a\ge b$: Let  $\ell = 8$ and $k = 9$, then 
$$
\tikzstyle{FULL}=[fill=yellow!20, circle, inner sep=2pt, draw]
\tikzstyle{SumToL}=[fill=blue!15, draw]
\tikzstyle{SumToBKL}=[fill=cyan!15, diamond, inner sep=1.5pt, draw]
\tikzstyle{SumToA}=[fill=green!15,  regular polygon,regular polygon sides=3, inner sep=1pt, draw]
\tikzstyle{Overlapping}=[fill=red!15, regular polygon,regular polygon sides=8, inner sep=1pt, draw]
\OneTLNode{Overlapping}{8}
\qquad\qquad
\lambda = (10,8) = \TikZ{[xscale=.4, yscale=-.4] 
	\filldraw[black!20] (0,0) to (6,0) to (6,1) to (3,1) to (3,2) to (0,2) to (0,0);
	\Part{10,8}
	 \node[bV,red] at (9,0){}; \node[bV,blue] at (6,1){}; \node[bV,blue] at (3,1){}; \node[bV,red] at (0,2){};
}
\qquad\hbox{with $S^{(0)}_{\mathrm{max}} = (6,3)$.}
$$
The boxes of $\lambda/S^{(0)}_{\mathrm{max}}$ have
$$\text{shifted contents: } 
\TikZ{[xscale=.5,yscale=-.5]
	\draw (6,2) to (6,0) to (10,0) to (10,1) to (3,1) to (3,2) to (8,2) to (8,0) (7,2) to (7,0) (4,2) to (4,1) (5,1) to (5,2) (9,0) to (9,1);
	\Cont{3.5,1.5}{-$\frac32$}
	\Cont{4.5,1.5}{-$\frac12$}
	\Cont{5.5,1.5}{$\frac12$}
	\Cont{6.5,1.5}{$\frac32$}
	\Cont{6.5,.5}{$\frac52$}
	\Cont{7.5,1.5}{$\frac52$}
	\Cont{7.5,.5}{$\frac72$}
	\Cont{8.5,.5}{$\frac92$}
	\Cont{9.5,.5}{$\frac{11}2$}
	 \node[bV,red] at (9,0){};  \node[bV,blue] at (6,1){}; \node[bV,blue] at (3,1){}; 
}
$$
Then $\cc$ is the rearrangement of the absolute values of 
$(-\frac52, -\frac32, -\frac12, \frac12, \frac32, \frac52, \frac72, \frac92,\frac{11}{2})$ into increasing order
and $J= \{\vep_3,  \vep_3-\vep_2, \vep_5-\vep_4, \vep_7-\vep_6\}$.  
The configuration of boxes $\kappa$ corresponding to $(\cc, J)$ has indexing of boxes
$$
\TikZ{[xscale=.5,yscale=-.5]
	\draw (2,0) to (11,0) to (11,1) to (9,1) to (9,2) to (0,2) to (0,1) to (2,1) to (2,0)  (1,2) to (1,1) (2,2) to (2,0) (3,2) to (3,0) (4,2) to (4,0) (5,2) to (5,0) (6,2) to (6,0) (7,2) to (7,0) (8,2) to (8,0) (9,1) to (9,0) (10, 1) to (10,0) (9,1) to (2,1);
	\Cont{0.5,1.5}{$-9$}
	\Cont{1.5,1.5}{$-8$}
	\Cont{2.5,1.5}{$-7$}
	\Cont{3.5,1.5}{$-5$}
	\Cont{4.5,1.5}{$-3$}
	\Cont{5.5,1.5}{$-1$}
	\Cont{6.5,1.5}{$2$}
	\Cont{7.5,1.5}{$4$}
	\Cont{8.5,1.5}{$6$}
	\Cont{2.5,.5}{$-6$}
	\Cont{3.5,.5}{$-4$}
	\Cont{4.5,.5}{$-2$}
	\Cont{5.5,.5}{$1$}
	\Cont{6.5,.5}{$3$}
	\Cont{7.5,.5}{$5$}
	\Cont{8.5,.5}{$7$}
	\Cont{9.5,.5}{$8$}
	\Cont{10.5,.5}{$9$}
	 \node[bV,red] at (10,0){}; \node[bV,blue] at (7,1){}; \node[bV,blue] at (4,1){}; \node[bV,red] at (1,2){};
}
\quad\hbox{with}\quad
P(\cc) = \left\{\begin{array}{l} \vep_3, \vep_9, \vep_2 + \vep_1, 
\vep_3-\vep_2, \vep_3-\vep_1 \\
\vep_5-\vep_4, \vep_5-\vep_3, \vep_6-\vep_4, \vep_6-\vep_3, \\
\vep_7-\vep_6, \vep_7-\vep_5, \vep_8-\vep_7, \vep_9-\vep_8
\end{array}\right\}.
$$

\item  An example with $a\ge \ell > b$: Let $k=3$ and $\ell=5$, so that $a+b+k-\ell = 7$. 
$$
\OneTLNode{SumToBKL}{5}	
\qquad\qquad
\lambda = (7,5) = \TikZ{[xscale=.4, yscale=-.4] 
	\filldraw[black!20] (0,0) to (6,0) to (6,1) to (3,1) to (3,2) to (0,2) to (0,0);
	\Part{7,5}
	\node[bV,blue] at (6,1){}; \node[bV,blue] at (3,1){};  \node[bV,red] at (0,2){};  
}
\qquad\hbox{with $S^{(0)}_{\mathrm{max}} = (6,3)$.}$$
The boxes of $\lambda/S^{(0)}_{\mathrm{max}}$ have
$$
\text{shifted contents: }
\TikZ{[xscale=.5,yscale=-.5]
	\draw (6,0) rectangle (7,1)  (3,1) rectangle (5,2) (4,1) to (4,2);
	\Cont{6.5,.5}{$\frac52$}
	\Cont{3.5,1.5}{-$\frac32$}
	\Cont{4.5,1.5}{-$\frac{1}{2}$}
	\node[bV,blue] at (6,1){}; \node[bV,blue] at (3,1){}; 
}
$$
Then $\cc$ is the rearrangement of the absolute values of 
$(\frac12, \frac32, \frac{5}{2})$ in increasing order and $J = \{\vep_2\}$.
The configuration of boxes $\kappa$ corresponding to $(\cc, J)$ is
$$
\TikZ{[xscale=.5,yscale=-.5]
	\draw (2,0) to (5,0) to (5,1) to (0,1) to (0,2) to (3,2) to (3,0)  (1,2) to (1,1)  (2,2) to (2,0) (4,1) to (4,0);
	\Cont{0.5,1.5}{$-3$}
	\Cont{1.5,1.5}{$-2$}
	\Cont{2.5,1.5}{$-1$}
	\Cont{2.5,.5}{$1$}
	\Cont{3.5,.5}{$2$}
	\Cont{4.5,.5}{$3$}
	 \node[bV,blue] at (1,1){}; \node[bV,blue] at (4,1){};
}
\quad\hbox{with}\quad
P(\cc) = \{ \vep_2, \vep_2-\vep_1, \vep_3-\vep_2\}
$$

\item An example with $a\ge b\ge \ell$: Let $k=3$ and $\ell=2$, so that $a+b+k-\ell = 10$. Then 
$$
\OneTLNode{SumToL}{2}	
\qquad\qquad
\lambda = (10,2) = \TikZ{[xscale=.4, yscale=-.4] 
	\filldraw[black!20] (0,0) to (7,0) to (7,1) to (2,1) to (2,2) to (0,2) to (0,0);
	\Part{10,2}
	\node[bV,blue] at (6,1){}; \node[bV,blue] at (3,1){};  \node[bV,red] at (0,2){};  \node[bV,red] at (9,0){};
}
\qquad\hbox{with $S^{(0)}_{\mathrm{max}} = (7,2)$.}
$$
The boxes of $\lambda/S^{(0)}_{\mathrm{max}}$ have
$$
\text{shifted contents: }
\TikZ{[xscale=.5,yscale=-.5]
	\draw (7,0) rectangle (10,1) (8,1) to (8,0)  (9,1) to (9,0);
	\Cont{7.5,.5}{$\frac72$}
	\Cont{8.5,.5}{$\frac92$}
	\Cont{9.5,.5}{$\frac{11}{2}$}
	\node[bV,red] at (9,0){};
}.
$$
Then $\cc$ is the rearrangement of the absolute values of $(\frac72, \frac92, \frac{11}{2})$ in increasing order and
$J =\emptyset$.
The configuration of boxes $\kappa$ corresponding to $(\cc, J)$ is
$$
\TikZ{[xscale=.5,yscale=-.5]
	\draw (8,0) to (11,0) to (11,1) to (8,1) to (8,0) (0,1) to (3,1) to (3,2) to (0,2) to (0,1)  (1,2) to (1,1)  (2,2) to (2,1) (9,1) to (9,0) (10,1) to (10,0);
	\Cont{0.5,1.5}{$-3$}
	\Cont{1.5,1.5}{$-2$}
	\Cont{2.5,1.5}{$-1$}
	\Cont{8.5,.5}{$1$}
	\Cont{9.5,.5}{$2$}
	\Cont{10.5,.5}{$3$}
	 \node[bV,red] at (1,2){}; \node[bV,red] at (10,0){}; \node[bV,blue] at (4,1){}; \node[bV,blue] at (7,1){};
}
\quad\hbox{with}\quad
P(\cc) = \{ \vep_2-\vep_1, \vep_3-\vep_2\}.
$$

\end{enumerate}
\end{example}

\pagebreak \vfill

\begin{figure}
\caption{The Temperley-Lieb Bratteli diagram for $a=6$ and $b=3$, levels $0$--$9$.
Partitions $\lambda = (a+b+k-\ell, \ell)$ are labeled by $\ell$.  The dimension formulas are
consequences of Remark \ref{dimformulas}.}
\label{Fig:2bdryTLBrat}
\newcommand\TLNode[1]{ \node[#1] (\x\k) at (\x-.5*\k,\k){\scriptsize$\x$};}
\begin{center}
\begin{tikzpicture}[xscale=.75, yscale=-.75]
\draw [dashed] (5,2) node[above right, inner sep=1pt] {$a$} -- (0,12) node[below left, inner sep=1pt] {$a$};
\draw [dashed] (3.5,-1)	node[above right, inner sep=1pt] {$b$} -- (-3,12) node[below left, inner sep=1pt] {$b$};
\draw [dashed] (-.5,-1)	node[above left, inner sep=1pt] {$k$} -- (5,10) node[below right, inner sep=1pt] {$k$};
\foreach\k [evaluate=\k as \w using \k*.5+4.5,remember=\k as \lasty, count=\c from 3] in {0,1,...,11}{
	\foreach \x [remember=\x as \lastx] in {0,...,\w}{
	\ifnum \x<\c \else \breakforeach \fi
	\ifnum \x<4
		\ifnum \x<\k \TLNode{SumToL}
			\else  \TLNode{FULL}
		\fi
	\else
		\ifnum \x > 6 
			\ifnum \x < \k \TLNode{Overlapping} \else \TLNode{OverlappingB} \fi
		\else
			\ifnum \x < \k  \TLNode{SumToA} \else 
				\ifnum \x> \c  \else	\TLNode{SumToBKL}	\fi
			\fi
		\fi
	\fi
	\ifnum \k>0
		\ifnum \x > 0
			\draw (\x\k) to (\lastx\lasty);
			\fi
		\ifnum \x<\c 
			\ifnum \x < \w
				\draw (\x\k) to (\x\lasty);
			\fi
		\fi
	\fi
	}}
	\draw (64)--(63) (76)--(75) (88)--(87) (910)--(99);
\node[FULL] (a) at (1.5,-1.5){\scriptsize$0$};
\foreach \x in {0,1,...,3}{\draw (a)--(\x0);}
\foreach \k in {0,1,...,11}{\node[right] at (5.75,\k){\scriptsize$k=\k$}; }
\end{tikzpicture}
%
\end{center}

\end{figure}

%

\end{document}